\documentclass[12pt]{amsart}

\setlength{\textwidth}{16cm}    
\setlength{\oddsidemargin}{0cm}  
\setlength{\evensidemargin}{0cm}

\usepackage{mathptmx} 
\DeclareSymbolFont{cmletters}{OML}{cmm}{m}{it}              
\DeclareSymbolFont{cmsymbols}{OMS}{cmsy}{m}{n}
\DeclareSymbolFont{cmlargesymbols}{OMX}{cmex}{m}{n}
\DeclareMathSymbol{\myjmath}{\mathord}{cmletters}{"7C}     \let\jmath\myjmath 
\DeclareMathSymbol{\myamalg}{\mathbin}{cmsymbols}{"71}     \let\amalg\myamalg
\DeclareMathSymbol{\mycoprod}{\mathop}{cmlargesymbols}{"60}\let\coprod\mycoprod
\DeclareMathSymbol{\myalpha}{\mathord}{cmletters}{"0B}     \let\alpha\myalpha 
\DeclareMathSymbol{\mybeta}{\mathord}{cmletters}{"0C}      \let\beta\mybeta
\DeclareMathSymbol{\mygamma}{\mathord}{cmletters}{"0D}     \let\gamma\mygamma
\DeclareMathSymbol{\mydelta}{\mathord}{cmletters}{"0E}     \let\delta\mydelta
\DeclareMathSymbol{\myepsilon}{\mathord}{cmletters}{"0F}   \let\epsilon\myepsilon
\DeclareMathSymbol{\myzeta}{\mathord}{cmletters}{"10}      \let\zeta\myzeta
\DeclareMathSymbol{\myeta}{\mathord}{cmletters}{"11}       \let\eta\myeta
\DeclareMathSymbol{\mytheta}{\mathord}{cmletters}{"12}     \let\theta\mytheta
\DeclareMathSymbol{\myiota}{\mathord}{cmletters}{"13}      \let\iota\myiota
\DeclareMathSymbol{\mykappa}{\mathord}{cmletters}{"14}     \let\kappa\mykappa
\DeclareMathSymbol{\mylambda}{\mathord}{cmletters}{"15}    \let\lambda\mylambda
\DeclareMathSymbol{\mymu}{\mathord}{cmletters}{"16}        \let\mu\mymu
\DeclareMathSymbol{\mynu}{\mathord}{cmletters}{"17}        \let\nu\mynu
\DeclareMathSymbol{\myxi}{\mathord}{cmletters}{"18}        \let\xi\myxi
\DeclareMathSymbol{\mypi}{\mathord}{cmletters}{"19}        \let\pi\mypi
\DeclareMathSymbol{\myrho}{\mathord}{cmletters}{"1A}       \let\rho\myrho
\DeclareMathSymbol{\mysigma}{\mathord}{cmletters}{"1B}     \let\sigma\mysigma
\DeclareMathSymbol{\mytau}{\mathord}{cmletters}{"1C}       \let\tau\mytau
\DeclareMathSymbol{\myupsilon}{\mathord}{cmletters}{"1D}   \let\upsilon\myupsilon
\DeclareMathSymbol{\myphi}{\mathord}{cmletters}{"1E}       \let\phi\myphi
\DeclareMathSymbol{\mychi}{\mathord}{cmletters}{"1F}       \let\chi\mychi
\DeclareMathSymbol{\mypsi}{\mathord}{cmletters}{"20}       \let\psi\mypsi
\DeclareMathSymbol{\myomega}{\mathord}{cmletters}{"21}     \let\omega\myomega
\DeclareMathSymbol{\myvarepsilon}{\mathord}{cmletters}{"22}\let\varepsilon\myvarepsilon
\DeclareMathSymbol{\myvartheta}{\mathord}{cmletters}{"23}  \let\vartheta\myvartheta
\DeclareMathSymbol{\myvarpi}{\mathord}{cmletters}{"24}     \let\varpi\myvarpi
\DeclareMathSymbol{\myvarrho}{\mathord}{cmletters}{"25}    \let\varrho\myvarrho
\DeclareMathSymbol{\myvarsigma}{\mathord}{cmletters}{"26}  \let\varsigma\myvarsigma
\DeclareMathSymbol{\myvarphi}{\mathord}{cmletters}{"27}    \let\varphi\myvarphi

\usepackage{amsthm,amsmath,amssymb,amscd,graphics,enumerate,latexsym,verbatim,stmaryrd,graphicx,color}
\usepackage[all]{xy}
\usepackage{tikz}
\usetikzlibrary{matrix,arrows}

\theoremstyle{plain}
\newtheorem{thm}{Theorem}[section]
\newtheorem{cor}[thm]{Corollary}
\newtheorem{lemma}[thm]{Lemma}
\newtheorem{prop}[thm]{Proposition}
\newtheorem{conj}[thm]{Conjecture}

\theoremstyle{definition}

\newtheorem{rem}[thm]{Remark}
\newtheorem{ex}[thm]{Example}


\DeclareMathOperator{\GL}{GL}

\DeclareMathOperator{\Gr}{Gr}

\DeclareMathOperator{\Spec}{Spec}

\DeclareMathOperator{\Hom}{Hom}
\DeclareMathOperator{\Ext}{Ext}

\DeclareMathOperator{\Adm}{Adm}

\DeclareMathOperator{\vspan}{span}

\def\decomp{\coprod^\circ}

\def\0{{\bf 0}}
\def\A{{\mathbb A}}

\def\C{{\mathbb C}}
\def\F{{\mathbb F}}
\def\G{{\mathbb G}}

\def\N{{\mathbb N}}
\def\P{{\mathbb P}}

\def\Z{{\mathbb Z}}

\def\cB{{\mathcal B}}

\def\cM{{\mathcal M}}

\def\Fun{{\F_1}}

\def\int{\textup{int}}

\def\pr{\textup{pr}}
\def\id{\textup{id}}
\def\1{\textbf{1}}

\def\ue{{\underline{e}}}

\def\um{{\underline{m}}}

\def\udim{{\underline{\dim}\, }}

\def\rk{{\textup{rk}}}

\def\={\equiv}
\def\n={\equiv\hspace{-10,5pt}/\hspace{3,5pt}}

\def\red{{\textup{red}}}

\newdir{ >}{{}*!/-5pt/@^{(}}\newcommand{\arincl}[1]{\ar@{ >->}@<-0,0ex>#1} 

\newcommand{\norm}[1]{\left| #1 \right|}

\newcommand{\tinyvec}[2]{\bigl( \begin{smallmatrix} #1 \\ #2  \end{smallmatrix} \bigr)}
\newcommand{\tinymat}[4]{\bigl( \begin{smallmatrix} #1 & #2 \\ #3 & #4 \end{smallmatrix} \bigr)}
\newcommand{\tinybmat}[4]{\bigl[ \begin{smallmatrix} #1 & #2 \\ #3 & #4 \end{smallmatrix} \bigr]}

\setcounter{tocdepth}{1}   

\begin{document}

\title[On Schubert decompositions of quiver Grassmannians]{On Schubert decompositions of quiver Grassmannians}
\author{Oliver Lorscheid}
\date{}
\address{IMPA, Estrada Dona Castorina 110, 22460-320 Rio de Janeiro, Brazil}
\email{lorschei@impa.br}

\begin{abstract}
 In this paper, we introduce Schubert decompositions for quiver Grassmannians and investigate example classes of quiver Grassmannians with a Schubert decomposition into affine spaces. The main theorem puts the cells of a Schubert decomposition into relation to the cells of a certain simpler quiver Grassmannian. This allows us to extend known examples of Schubert decompositions into affine spaces to a larger class of quiver Grassmannians. This includes exceptional representations of the Kronecker quiver as well as representations of forests with block matrices of the form $\tinymat 0100$. Finally, we draw conclusions on the Euler characteristics and the cohomology of quiver Grassmannians.
\end{abstract}

\maketitle

\begin{footnotesize}\tableofcontents\end{footnotesize}


\section*{Introduction}
\label{intro}

In 1994, Lusztig published his seminal book \cite{Lusztig94} on the existence of canonical bases for Lie algebras. This was the starting point of vivid research that aimed for a better understanding of canonical bases. Despite being hard to compute, much insight was gained into the general structure of canonical bases during the last years. 

A major contribution to the subject was the introduction of cluster algebras by Fomin and Zelevinsky in 2002, see \cite{fomin-Zelevinsky02} and their subsequent publications. An important feature of the theory of cluster algebras is the mutation operation that associates to a quiver $Q$, by recursion, a set of so-called cluster variables, which generates the associated cluster algebra. In 2006, Caldero and Chapoton found an explicit formula that expresses the cluster variables in terms of the Euler characteristics of the quiver Grassmannians $\Gr_\ue(M)$ for the rigid representations $M$ of the quiver $Q$, see \cite{Caldero-Chapoton06}.

The Caldero-Chapoton formula drew attention to quiver Grassmannians and, in particular, to their Euler characteristics. In \cite{Caldero-Reineke08}, Caldero and Reineke established many basic properties of quiver Grassmannians for acyclic quivers, e.g.\ its smoothness in the case of a rigid representation $M$. They determine the Euler characteristic of $\Gr_\ue(M)$ if $M$ is an indecomposable representation of the Kronecker quiver, and they remark that Schubert decompositions of quiver Grassmannians might help to compute their Euler characteristics.

Many other publications followed. Cerulli and Esposito inspect in \cite{Cerulli-Esposito11} quiver Grassmannians of Kronecker type in further detail and apply this to the canonical basis of cluster algebras of types $A_1^{(1)}$ and $A_2^{(1)}$. In particular, they describe a decomposition of $\Gr_\ue(M)$ into affine spaces in case $M$ is a regular representation (cf.\ Example \ref{ex: kronecker quiver}). Sz\'ant\'o establishes in \cite{Szanto11} a counting polynomial of the $\F_q$-rational points of quiver Grassmannians of Kronecker type, which hints that there exists a Schubert decomposition into affine spaces for other types of representations of the Kronecker quiver as well.

Rupel conjectures in \cite{Rupel11a} the positivity of acyclic seeds for cluster algebras. This conjecture implies that quiver Grassmannians of rigid representations have a counting polynomial in the acyclic case, which in turn implies the positivity of their Euler characteristics if the quiver Grassmannian is not empty. This conjecture was partially proven by Qin in \cite{Qin10}, followed by a complete proof by Rupel in \cite{Rupel11b}.

In \cite{Cerulli-Feigin-Reineke11}, \cite{Cerulli-Feigin-Reineke12a} and \cite{Cerulli-Feigin-Reineke12b}, Cerulli, Feigin and Reineke realize degenerate flag varieties as quiver Grassmannians of Dynkin type. A particular result of interest for the present paper is the existence of a Schubert decomposition into affine spaces (cf.\ Examples \ref{ex: degenerate_flag_varieties1} and \ref{ex: degenerate_flag_varieties2}). 

The two papers that essentially inspired the results of this paper are \cite{Cerulli11} and \cite{Haupt12}. In \cite{Cerulli11}, Cerulli gives a formula for the Euler characteristics of quiver Grassmannians of orientable string modules. In \cite{Haupt12}, Haupt extends the results of \cite{Cerulli11} to the class of tree modules and also provides a formula for the Euler characteristic of quiver Grassmannians of band modules. The method of both papers is to construct a weighted diagonal action of the one-dimensional torus $T=\G_m$ on the representation $M$ in question. This divides the quiver Grassmannian $X=\Gr_\ue(M)$ into the locally closed subscheme $X^T$ of fixed points and its complement $Z=X-X^T$, which yields
\[
 \chi\bigl(\Gr_\ue(M)\bigr) \quad = \quad \chi\bigl(X^T\bigr) \ + \ \chi\bigl(Z\bigr) \quad = \quad \chi\bigl(X^T\bigr) \ + \ \underbrace{\chi\bigl(T\bigr)}_{=0} \ \cdot \ \chi\bigl(Z/\!\!\!/T\bigr) \quad = \quad \chi\bigl(X^T\bigr).
\]
After applying this trick several times, the fixed point set $X^T$ is finite and can be identified with the number of subrepresentations of a certain quiver representation $\tilde M$ that is simpler than $M$. This means, in particular, that these Euler characteristics are positive if the quiver Grassmannian is not empty. 

During the attempt to understand the geometry of the quiver Grassmannians considered in \cite{Cerulli11} and \cite{Haupt12}, it turned out that in many cases, quiver Grassmannians have a decomposition into affine spaces. A systematic study of these decompositions led to the results of this paper. Though the methods of this paper are completely different, we will obtain formulas for the Euler characteristics of quiver Grassmannians in a class that has a large intersection with the class of cases treated in \cite{Cerulli11} and \cite{Haupt12}. The existence of Schubert decompositions into affine spaces allows us further to extract information about the cohomology. For instance, if the representation $M$ is rigid, then the Schubert cells determine an additive basis for the cohomology ring.

\subsection*{Results}
 The quiver Grassmannian $\Gr_\ue(M)$ of subrepresentations $V$ of $M$ with dimension vector $\ue$ is defined as a closed subscheme of the usual Grassmannian $\Gr(e,m)$ where $e$ is the dimension of $V$ and $m$ is the dimension of $M$ over the ground field. The intersection of $\Gr_\ue(M)$ with a Schubert decomposition of $\Gr(e,m)$ defines a Schubert decomposition of $\Gr_\ue(M)$. In general, this is not a decomposition into affine spaces, and the isomorphism type of the Schubert cells is not independent of the choices that define the Schubert decomposition for $\Gr(e,m)$. 

 The results of this paper concentrate on establishing cases of quiver Grassmannians that have a Schubert decomposition into affine spaces. The main result Theorem \ref{thm: push-forward} roughly says the following: Let $S\subset T$ be an inclusion of quivers such that the quotient $T/S$ is a tree and let $M$ be a representation of $T$. Let $F:T\to Q$ be a morphism of quivers that satisfies a certain Hypothesis (H). Then the Schubert cell $C_\beta^{F_\ast M}$ of the push-forward $F_\ast M$ of $M$ equals the product $\A^n\times C_{\beta_S}^{F_\ast M_S}$ of an affine space with the corresponding Schubert cell for the push-forward of the restriction $M_S$ of $M$ to $S$.

 While Hypothesis (H) is too technical to explain in brevity, it should be mentioned that this hypothesis is a purely combinatorial condition on the structure of the fibres of $F:T\to Q$, which can be checked easily in examples, and which can be implemented in a computer algorithm. We will illustrate a number of its consequences and other results of this paper.

\begin{enumerate}
 \item\label{part1} Let $M$ be an exceptional indecomposable representation of the Kronecker quiver and $\ue$ a dimension vector. Then $\Gr_\ue(M)$ has a Schubert decomposition into affine spaces (see Example \ref{ex: exceptional modules of the Kronecker quiver}).
 \item\label{part2} Let $T$ be a tree and $M$ a representation of $T$ whose linear maps are block matrices of the form $\tinymat 0100$ where $1$ is a square identity matrix. Then $\Gr_\ue(M)$ has a Schubert decomposition into affine spaces (see Thm.\ \ref{thm: monomial representations of forests}). If all linear maps defining $\Gr_\ue(M)$ are isomorphisms, then the quiver Grassmannian \emph{decomposes} into a series of fibre bundles whose fibres are usual Grassmannians (see Thm.\ \ref{thm: Grassmannian fibrations}).
 \item\label{part3} We re-obtain the Schubert decompositions of Cerulli and Esposito in \cite{Cerulli-Esposito11} (see Example \ref{ex: kronecker quiver}) and Cerulli, Feigin and Reineke in \cite{Cerulli-Feigin-Reineke11} (see Ex.\ \ref{ex: degenerate_flag_varieties1} and \ref{ex: degenerate_flag_varieties2}).
 \item\label{part4} If $\Gr_\ue(M,\C)=\coprod_{i\in I} X_i(\C)$ is a decomposition into complex affine spaces $X_i(\C)$, then the Euler characteristic of $\Gr_\ue(M)$ is $\chi\bigl(\Gr_\ue(M)\bigr)=\# I$ (see Prop.\ \ref{prop: euler characteristic from dias}). If $\Gr_\ue(M)$ is smooth, then the singular cohomology is concentrated in even degrees and generated by the closure of the classes of the Schubert cells (see Cor.\ \ref{cor: cohomology for smooth quiver grassmannians}). In particular, this reproduces the formulas in \cite{Cerulli11} and \cite{Haupt12} (under assumption of Hypothesis (H)) in terms of the combinatorics of the Schubert cells (see Remark \ref{rem: on the results of cerulli and haupt}). 
 \item\label{part5} If $\Gr_\ue(M,\C)=\coprod_{i\in I} X_i(\C)$ is a \emph{regular decomposition} (see Section \ref{subsection: regular decompositions}) into complex affine spaces, then the multiplication of $H^\ast(\Gr_\ue(M,\C))$ is determined by the cohomology rings of the irreducible components of $\Gr_\ue(M)$ (see Thm.\ \ref{lemma: cohomology for regular decompositions into affine spaces}).
\end{enumerate}

Next to these outcomes, the reader will find numerous side results, remarks and examples, which shall illustrate certain effects of the theory of Schubert decompositions of quiver Grassmannians.

\subsection*{Content overview}
The paper is structured as follows. In Section \ref{section: background}, we recall the definition of quiver Grassmannians and cite some basic facts.

In Section \ref{section: schubert cells}, we define Schubert cells for quiver Grassmannians. In \ref{subsection: schubert decompositions for acyclic quiver}, we explain the connection between the definition of this paper and the one given in \cite{Caldero-Reineke08} for acyclic quiver. In \ref{subsection: K-rational points}, we identify the $K$-rational points of a Schubert cell with certain matrices of generating vectors with prescribed pivot element. In \ref{subsection: examples of schubert cells}, we describe some examples of Schubert decompositions.

In Section \ref{section: tree extensions}, we introduce the notion of a tree extension $T$ of a quiver $S$. In \ref{subsection: results for tree extensions}, we state the main results for tree extension that connect a quiver Grassmannian $\Gr_\ue(M)$ of a representation $M$ of $T$ to the quiver Grassmannian $\Gr_{\ue_S}(M_S)$ of the restriction $M_S$ of $M$ to $S$. In particular, there is a smooth projective morphism $\Gr_\ue(M)\to\Gr_{\ue_S}(M_S)$ and the Schubert cells of $\Gr_\ue(M)$ are a product of a Schubert cell of $\Gr_{\ue_S}(M_S)$ with an affine space.

In Section \ref{section: push-forwards}, we introduce push-forwards of quiver representations along morphisms of quivers. In \ref{subsection: defining equations for schubert cells}, we describe the equations that are satisfied by the $K$-rational points of Schubert cells when we push-forward a representation. In \ref{subsection: comparison of M and F_ast M}, we introduce comparison morphisms between a Schubert cell and the corresponding Schubert cell for the push-forward. In \ref{subsection: relevant pairs}, we introduce relevant pairs and relevant triples, which index the variables and relations, respectively, of a Schubert cell. In \ref{subsection: fibre types}, we describe the shape of a relation of an relevant triple in dependence of the shape of the fibres of a morphism of quivers. In \ref{subsection: push-forward theorem}, we formulate Hypothesis (H), under which we can prove the main theorem of this paper (see Results above).

In Section \ref{section: consequences of the push-forward theorem}, we list some consequences of the main theorem. First of all, we give a general condition for a quiver Grassmannian to have a Schubert decomposition into affine spaces. In \ref{subsection: direct sums}, we explain a result for the quiver Grassmannian of a direct sum of representations. In \ref{subsection: monomial representations of forests}, we show that representations of forests with block matrices $\tinymat 0100$ yield quiver Grassmannians with a Schubert decomposition into affine spaces (see Results \eqref{part2}). 

In Section \ref{section: cohomology of quiver grassmannians}, we draw conclusions on the cohomology of a quiver Grassmannian that has a Schubert decompositions into affine spaces. If $\Gr_\ue(M)$ is smooth, then its cohomology classes are generated by the closure of the Schubert cells. This means in particular that the cohomology is concentrated in even degrees. Even without the smoothness assumption, we see that the Euler characteristics is given by the number of non-empty Schubert cells. In \ref{subsection: regular decompositions}, we introduce the notion of a regular decomposition. A regular decomposition allows us to deduce the multiplicative structure of the cohomology ring from the cohomology of the irreducible components. In \ref{subsection: examples and conjectures}, we describe certain example classes of quiver Grassmannians with regular Schubert decompositions and formulate two conjectures on the existence of regular Schubert decompositions.

\subsection*{Remark}
As pointed out to me by Giovanni Cerulli Irelli and Gr\'egoire Dupont, the formulas for the Euler characteristics in \cite{Cerulli11} and \cite{Haupt12} count subrepresentations that look like ``$\Fun$-rational points'' (cf.\ Szczesny's paper \cite{Szczesny12} on quiver representation over $\Fun$). That the number of $\Fun$-rational points equals the Euler characteristic is one of the main concepts in $\Fun$-geometry. Therefore, there is the hope that a better understanding of the geometry of quiver Grassmannians over $\Fun$ will help to compute their Euler characteristics. The connection of quiver Grassmannians and their Schubert decompositions to $\Fun$-geometry will be the topic of a subsequent paper. This is the reason why we work over an arbitrary base ring $k$ in this paper.

\subsection*{Acknowledgements} 
I would like to thank Giovanni Cerulli Irelli and Gr\'egoire Dupont for bringing quiver Grassmannians into my attention and for sharing their ideas on the connection to $\Fun$-geometry with me. I would like to thank Markus Reineke for his explanations and many discussions on quiver Grassmannians. I would like to thank Ethan Cotteril for our conversations on Schubert calculus. I would like to thank Damiano Testa for a discussion that helped to improve Section \ref{section: cohomology of quiver grassmannians}.


\section{Background}
\label{section: background}

A \emph{quiver} is a finite directed graph with possibly multiple edges and loops. We formalize a quiver as a quadruple $Q=(Q_0,Q_1,s,t)$ where $Q_0$ is a finite set of vertices, $Q_1$ is a finite set of arrows, $s:Q_1\to Q_0$ associates to each arrow its source or tail and $t:Q_1 \to Q_0$ associates to each arrow its target or head.

During the major part of this paper, we fix a ring $k$. We will only specify to the case $k=\C$ in some parts of Section \ref{section: cohomology of quiver grassmannians}. But for many applications, it is enough to keep the case $k=\C$ in mind.

The \emph{path algebra of $Q$ over $k$} is the $k$-algebra $k[Q]$ that is freely generated as a $k$-module by all oriented paths in $Q$. In particular, there is a path $\epsilon_p=[p|p]$ of length $0$ at every vertex $p$ of $Q$. The multiplication is defined by composition of paths if possible, and $0$ otherwise. The elements $\epsilon_p$ are idempotent, and $1=\sum_{p\in Q_0} \epsilon_p$ is the identity of $k[Q]$. As a $k$-algebra, $k[Q]$ is generated by the idempotents $\epsilon_p$ and the paths of length $1$, i.e.\ by the arrows $\alpha$ of $Q$.

A \emph{representation of $Q$ over $k$} or, for short, a \emph{$Q$-module} is a free $k[Q]$-module $M$ of finite rank. Equivalently, we can consider $M$ as a collection of free $k$-modules $M_p=\epsilon_pM$ for $p\in Q_0$ together with the collection of $k$-linear maps $M_\alpha:M_p\to M_q$, defined by $f_\alpha(\epsilon_p.m)=\alpha.m$ for every arrow $\alpha$ from $p$ to $q$. Then $M=\bigoplus_{p\in Q_0} M_p$, and the $k[Q]$-algebra structure is determined by the $k$-linear maps $M_\alpha$. The \emph{dimension vector $\udim M$ of $M$} is the tuple $\underline m=(m_p)_{p\in Q_0}$ where $m_p$ is the rank of $M_p$ over $k$.

In the following, we will relax the language a bit. We assume that the base ring is fixed and do not mention $k$ if the context is clear. We will further identify $M$ with both $\bigoplus M_p$ and $(\{M_p\}_{p\in Q_0},\{M_\alpha\}_{\alpha\in Q_1})$, and switch viewpoints where this is helpful.

A submodule $V$ of $M$ can be identified with a collection of sub-$k$-modules $V_p=\epsilon_pV$ of $M_p$ for every $p\in Q_0$ such that $M_\alpha(V_p)\subset V_q$ for every arrow $\alpha:p\to q$ in $Q_1$. Let $\ue=(e_p)_{p\in Q_0}$ be a dimension vector smaller or equal to $\um$, i.e.\ $e_p\leq m_p$ for all $p\in Q_0$. Define $\Gr_\ue(\um)$ as the product $\prod_{p\in Q_0} \Gr(e_p,m_p)$ and $R_\um(Q)$ as the product $\prod_{\alpha\in Q_1} \Hom(k^{m_{s(\alpha)}}, k^{m_{t(\alpha)}})$, which we consider as a scheme by identifying the homomorphism sets with affine spaces over $k$ of adequate dimensions. Then $Gr_\ue(\um)\times R_\um(Q)$ is a reduced scheme over $k$. The \emph{universal Grassmannian $\Gr^Q_\ue(\um)$} is the closed reduced subscheme of $Gr_\ue(\um)\times R_\um(Q)$ whose $K$-rational points are described as the set
\[
 \Bigl\{ \ \bigl(\,(V_p\subset k^{m_p})_{p\in Q_0}, \, (f_\alpha)_{\alpha\in Q_1}\,\bigr)\,\in\,Gr_\ue(\um)\times R_\um(Q)(K)  \ \Bigl| \ f_\alpha(V_p)\subset V_q\text{ for all }\alpha:p\to q\text{ in }Q_1 \ \Bigr\}
\]
for any field extension $K$ of $k$. For a $k$-rational point $M$ of $R_\um(Q)$---which is nothing else than a $Q$-module over $k$, together with a fixed basis---, the \emph{quiver Grassmannian $\Gr_\ue(M)$} is defined as the fibre of $\pr_2: \Gr_\ue^Q(\um)\to R_\um(Q)$ over $M$. See Sections 2.2 and 2.3 in \cite{Cerulli-Feigin-Reineke11} for more details on the definition of $\Gr_\ue(M)$.

Note that the isomorphism type of $\Gr_\ue(M)$ does not depend on the choice of basis for $M$, which allows us to define $\Gr_\ue(M)$ for any $Q$-module $M$. Note further that $\Gr_\ue(M)$ is in general not reduced. For a field extension $K$ of $k$, the set of $K$-rational points of $\Gr_\ue(M)$ coincides with the set
\[
 \bigl\{ \ V\subset M_K \ \bigr| \ M_\alpha(V_p)\subset V_q\text{ for all }\alpha:p\to q\text{ in }Q_1 \ \bigl\}
\]
where $M_K=M\otimes_k K$. 

The quiver Grassmannian $\Gr_\ue(M)$ is a closed subscheme of the product $\prod \Gr(e_v,m_v)$ of the usual Grassmannians over all vertices $v$ of $Q$. We cite two general facts about quiver Grassmannians.

\begin{thm}[Reineke, \cite{Reineke12}]\label{thm: reineke}
 Every projective $k$-scheme is isomorphic to a quiver Grassmannian.
\end{thm}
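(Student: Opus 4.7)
The plan is to construct, for any projective $k$-scheme $X$, a quiver $Q$, a $Q$-module $M$, and a dimension vector $\ue$ with $\Gr_\ue(M)\cong X$. The strategy is to reduce the defining equations of $X$ to linear ones via a Veronese embedding, and then to encode those linear conditions as subrepresentation (incidence) conditions in a quiver Grassmannian.

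First, I would embed $X$ as a closed subscheme of some projective space $\P^N_k$ and let $g_1,\ldots,g_r$ be homogeneous generators of its defining ideal. After multiplying by coordinate functions we may assume all $g_i$ have the same degree $d$. The $d$-uple Veronese $v_d:\P^N\hookrightarrow\P(\textup{Sym}^d k^{N+1})$ then realises $v_d(X)$ scheme-theoretically as the intersection of the Veronese image $v_d(\P^N)$ with a linear subspace cut out by linear forms $\ell_1,\ldots,\ell_r$ on $\textup{Sym}^d k^{N+1}$.

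Second, I would realise this picture as a quiver Grassmannian using the following elementary building block: for a linear map $f:k^n\to k^m$ and a fixed subspace $V'\subset k^m$, the incidence condition ``$f(V_1)\subset V'$'' on an $e$-dimensional subspace $V_1\subset k^n$ is captured by the two-vertex quiver $1\to 2$ with $M_1=k^n$, $M_2=k^m/V'$, $M_\alpha=\pr\circ f$, and dimension vector $(e,0)$: the vanishing of the image $V_2$ forces $f(V_1)\subset V'$. Stacking finitely many such building blocks, I would assemble a quiver $Q$ whose vertices include copies of $\textup{Sym}^i k^{N+1}$ for $i=1,\ldots,d$ together with auxiliary target vertices, and whose arrows encode both the polarisation relations that force a compatible family of lines $L_i\subset\textup{Sym}^i k^{N+1}$ to be of the form $L_i=\langle v^i\rangle$ for a common $v\in k^{N+1}$, and the vanishing of the $\ell_j$ on $L_d$. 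With dimension vector $\ue=(1,\ldots,1,0,\ldots,0)$, a subrepresentation of $M$ then corresponds precisely to a point of $v_d(X)\cong X$.

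The main obstacle is the scheme-theoretic realisation of the Veronese itself: one must exhibit enough quiver-theoretic relations to cut out $v_d(\P^N)\subset\P(\textup{Sym}^d k^{N+1})$ with its natural scheme structure, not merely set-theoretically. This amounts to showing that the apolarity/polarisation relations encoded by the chosen arrows exactly generate the Plücker-type ideal of the rational normal variety of rank-one symmetric tensors. Once this is established, the linear cut follows from the incidence building block above, and the identification $\Gr_\ue(M)\cong X$ reduces to a comparison of defining equations in a common ambient product of Grassmannians $\prod_i\Gr(1,\dim\textup{Sym}^i k^{N+1})$.
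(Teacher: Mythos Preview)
The paper does not give its own proof of this statement: Theorem~\ref{thm: reineke} is quoted as background, attributed to Reineke \cite{Reineke12}, and left without argument. So strictly speaking there is nothing in the paper to compare your proposal against.

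Comparing instead to Reineke's original argument: his construction is considerably more economical than yours. He uses only the generalised Kronecker quiver (two vertices, $n+1$ arrows from $1$ to $2$) and a representation built from two consecutive graded pieces of the homogeneous coordinate ring of $X\subset\P^n$, with the arrows acting as multiplication by the coordinates $x_0,\dots,x_n$. The point is that a subrepresentation of the appropriate dimension vector corresponds to a compatible pair of hyperplanes in these graded pieces, which one identifies with the graded pieces of the ideal of a point of $X$. No separate realisation of the Veronese variety is needed.

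Your proposal, by contrast, first tries to realise $v_d(\P^N)$ itself as a quiver Grassmannian via a chain of symmetric powers, and only then to cut by linear forms. You correctly flag the Veronese step as ``the main obstacle'', and as written the sketch does not close. Polarisation $\textup{Sym}^i\otimes\textup{Sym}^1\to\textup{Sym}^{i+1}$ is bilinear and so cannot be encoded by a single arrow; if you replace it by the $n+1$ linear maps ``multiply by a basis vector $e_j$'', then for a line $\langle v\rangle\subset V$ the images $e_0v,\dots,e_nv\in\textup{Sym}^2V$ are linearly independent, so with dimension vector $(1,\dots,1)$ the quiver Grassmannian is empty as soon as $n\ge1$. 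A repair is possible---e.g.\ use the contraction maps $\partial/\partial x_j:\textup{Sym}^iV^\ast\to\textup{Sym}^{i-1}V^\ast$ with dimension vector $(1,\dots,1)$, or keep multiplication maps but pass to codimension-one subspaces---but this is precisely the work your proposal leaves undone. Your linear-cut building block (forcing $f(V_1)\subset V'$ via a target vertex with dimension-vector entry $0$) is correct and would combine cleanly with either repair.
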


A $Q$-module $M$ is \emph{rigid} or \emph{exceptional} if it has no self-extensions, i.e.\ $\Ext^1(M,M)=0$. 

\begin{thm}[Caldero and Reineke, \cite{Caldero-Reineke08}]\label{thm: caldero-reineke}
 If $M$ is a rigid $Q$-module, then $\Gr_\ue(M)_k$ is a smooth $k$-scheme.
\end{thm}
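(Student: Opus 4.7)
The plan is a tangent-space computation: identify $T_V\Gr_\ue(M)$ with a $\Hom$-space of quiver representations, show via the Euler form that its dimension is constant on $\Gr_\ue(M)$, and deduce smoothness. Throughout I assume $Q$ is acyclic, which is the setting of \cite{Caldero-Reineke08} and ensures that the path algebra $k[Q]$ has global dimension at most one.

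First I would identify the Zariski tangent space at a $K$-rational point $V$. Since $\Gr_\ue(M)$ is cut out inside $\prod_{v\in Q_0}\Gr(e_v,m_v)$ by the conditions $M_\alpha(V_{s(\alpha)})\subset V_{t(\alpha)}$, and since the tangent space of $\Gr(e_v,m_v)$ at $V_v$ is canonically $\Hom_k(V_v,M_v/V_v)$, a standard calculation with dual numbers $k[\varepsilon]/(\varepsilon^2)$ shows that a tuple $(\varphi_v)\in\bigoplus_v\Hom_k(V_v,M_v/V_v)$ belongs to $T_V\Gr_\ue(M)$ precisely when $\overline{M_\alpha}\circ\varphi_{s(\alpha)}=\varphi_{t(\alpha)}\circ M_\alpha|_{V}$ for every arrow $\alpha$. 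In other words,
\[
T_V\Gr_\ue(M) \;\cong\; \Hom_Q(V,M/V).
\]

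Next I would invoke the Euler form. Because $Q$ is acyclic, for any $Q$-modules $X,Y$ one has $\Ext^i_Q(X,Y)=0$ for $i\geq 2$, and consequently
\[
\dim\Hom_Q(X,Y)-\dim\Ext^1_Q(X,Y) \;=\; \langle\udim X,\udim Y\rangle,
\]
where $\langle\,\cdot\,,\,\cdot\,\rangle$ is the Euler form of $Q$. Applied with $X=V$ and $Y=M/V$, this gives a dimension formula for $T_V\Gr_\ue(M)$ whose right-hand side depends only on the fixed vectors $\ue$ and $\um-\ue$, so the tangent dimension is constant on $\Gr_\ue(M)$ provided $\Ext^1_Q(V,M/V)$ vanishes. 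To establish that vanishing I would apply the long exact sequences associated to $0\to V\to M\to M/V\to 0$. Applying $\Hom_Q(M,-)$ gives $\Ext^1_Q(M,M)\to\Ext^1_Q(M,M/V)\to\Ext^2_Q(M,V)=0$; rigidity $\Ext^1_Q(M,M)=0$ forces $\Ext^1_Q(M,M/V)=0$. Applying $\Hom_Q(-,M/V)$ then gives $\Ext^1_Q(M,M/V)\to\Ext^1_Q(V,M/V)\to\Ext^2_Q(M/V,M/V)=0$, hence $\Ext^1_Q(V,M/V)=0$ as required.

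Combining the two previous steps yields $\dim T_V\Gr_\ue(M)=\langle\ue,\um-\ue\rangle$ at every $K$-rational point. The step I expect to be the most delicate is the final passage from this constant tangent dimension to the smoothness of the scheme, because $\Gr_\ue(M)$ is defined as a closed subscheme and is not a priori reduced. The standard way around this is to show, via the same $\Ext$-vanishing, that the scheme $\Gr^Q_\ue(\um)$ is smooth at the point $(V,M)$ (its tangent space fits into an exact sequence whose cokernel is $\Ext^1_Q(V,M/V)=0$), and that the projection $\pr_2:\Gr^Q_\ue(\um)\to R_\um(Q)$ is smooth at $(V,M)$ because rigidity makes the $\GL_\um$-orbit of $M$ open. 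The fibre $\Gr_\ue(M)$ is therefore smooth of the expected dimension $\langle\ue,\um-\ue\rangle$ at every point.
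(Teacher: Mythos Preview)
The paper does not prove this theorem; it is stated with attribution to \cite{Caldero-Reineke08} and used as a black box. Your proposal is a correct reconstruction of the standard Caldero--Reineke argument: the tangent-space identification $T_V\Gr_\ue(M)\cong\Hom_Q(V,M/V)$, the two long exact sequences that extract $\Ext^1_Q(V,M/V)=0$ from rigidity, and the Euler-form computation giving constant tangent dimension $\langle\ue,\um-\ue\rangle$ are exactly the ingredients of the cited proof. Your caution about the last step (constant tangent dimension does not by itself imply smoothness for a possibly non-reduced scheme) is well placed, and the workaround you sketch---smoothness of the universal Grassmannian at $(V,M)$ together with openness of the $\GL_{\um}$-orbit of a rigid $M$---is the right one.
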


\section{Schubert cells}
\label{section: schubert cells}

Let $M$ be a free $k$-module and $e\leq \rk M$ a non-negative integer. Then the choice of a (linearly) ordered basis $\cB$ of $M$ over $k$ defines a Schubert cell decomposition of the usual Grassmannian $\Gr_e(M)$ into affine spaces. In case of a $Q$-module $M$ with dimension vector $\um$ and $\ue\leq\um$, the quiver Grassmannian $\Gr_\ue(M)$ is a subscheme of the usual Grassmannian $\Gr_{\norm\ue}(M)$ via the closed embedding
\[
 \iota: \quad \Gr_\ue(M) \quad \longrightarrow \quad \prod_{p\in Q_0} \ \Gr(e_p,m_p) \quad \longrightarrow \quad \Gr_{\norm\ue}(M)
\]
where $\norm\ue=\sum_{p\in Q_0} e_p$. This allows to define the \emph{Schubert decomposition of $\Gr_\ue(M)$ w.r.t.\ $\cB$} as the pull-back of the Schubert decomposition of $\Gr_{\norm\ue}(M)$.

We will explain this definition in more detail, assuming the following general hypothesis that will be valid throughout the paper unless explicitly mentioned otherwise. Let $Q$ be a quiver and $M$ a $Q$-module with basis $\cB$ (as a $k$-module). Then we assume the following property.
\begin{enumerate}
 \item[] The intersection $\cB_p=\cB\cap M_p$ is a basis of $M_p$ for every $p\in Q_0$. In other words, $\cB=\coprod_{p\in Q_0} \cB_p$.
\end{enumerate}

For a subset $\beta$ of $\cB$, we define $\beta_p=\beta\cap \cB_p$. The \emph{type of $\beta$} is the dimension vector $\ue=(e_p)$ with $e_p=\#\beta_p$. If $\cB$ is an ordered basis of $M$, $p\in Q_0$ and $\beta$ and $\gamma$ are subsets of $\cB$ of the same type $\ue$, then we write $\beta_p\preceq\gamma_p$ if we have $b_{p,l}\leq c_{p,l}$ for all $l\in\{1,\dotsc,e_p\}$ where we write $\beta_p=\{b_{p,1},\dotsc,b_{p,e_p}\}$ and $\gamma=\{c_{p,1},\dotsc,c_{p,{e_p}}\}$, ordered by size. We write $\beta\preceq\gamma$ if $\beta_p\preceq\gamma_p$ for all $p\in Q_0$. 

Let $\cB$ be a basis of $M$. The Pl\"ucker coordinates of the \emph{product Grassmannian}
\[
 \Gr_\ue(\um) \quad = \quad \prod_{p\in Q_0}\ \Gr(e_p,m_p) \quad \subset\quad \prod_{p\in Q_0}\ \P^{\binom{m_p}{e_p}-1}
\]
are the $e_p\times e_p$-minors 
\[
 \Delta_{\beta_p}: \quad v^p \quad \longmapsto \quad \det \ (v^p_{i,j})_{i\in\beta_p,j=1,\dotsc, e_p}
\]
of $m_p\times e_p$-matrices $v^p=(v^p_{i,j})_{i\in\cB_p,j=1,\dotsc,e_p}$ where $p$ varies through $Q_0$ and $\beta$ through the subsets of $\cB$ of type $\ue$. We denote by $U_\beta$ the canonical open subset of $\Gr_\ue(\um)$ with $\Delta_{\beta_p}=1$ for all $p\in Q_0$.

Let $\cB$ be an ordered basis of $M$ and $\beta\subset\cB$ a subset. Then define the \emph{Schubert cell $C_\beta(\um)$ of $\Gr_\ue(\um)$} as the intersection of $U_\beta$ with the vanishing set of all $e_p\times e_p$-minors $\Delta_{\gamma_p}$ with $\gamma_p\succeq\beta_p$, seen as a locally closed and reduced subscheme of $\Gr_\ue(\um)$. We define the \emph{Schubert cell $C_\beta^M$ of $\Gr_\ue(M)$} as the pull-back of $C_\beta(\um)$ along the closed embedding $\Gr_\ue(M)\hookrightarrow \Gr_\ue(\um)$. Then $C_\beta^M$ is a locally closed subscheme of $\Gr_\ue(M)$. Note that $C_\beta^M$ is in general not reduced (cf.\ Example \ref{ex: quiver with one vertex and one arrow}). Sometimes we refer to the reduced subscheme $C_\beta^{M,\red}=(C_\beta^M)^\red$ as a \emph{reduced Schubert cell}.

By the Schubert decomposition of usual Grassmannians, $\Gr_\ue(\um)$ decomposes into the Schubert cells $C_\beta(\um)$ where $\beta$ ranges through all subsets of $\cB$ of type $\ue$. The pull-back of this decomposition yields the \emph{decomposition}
\[
 \varphi: \quad \coprod_{\beta\subset\cB\text{ of type }\ue} C_\beta^M \quad \longrightarrow \quad \Gr_\ue(M),
\]
i.e.\ a morphism of $k$-schemes such that the restriction of $\varphi$ to one cell $C_\beta^M$ is a locally closed embedding and such that $\varphi$ induces a bijection between $K$-rational points for every field extension $K$ of $k$. We call this decomposition the \emph{Schubert decomposition of the quiver Grassmannian $\Gr_\ue(M)$ (w.r.t.\ $\cB$)}. In agreement with \cite{LL09}, we also write 
\[
 \Gr_\ue(M) \quad = \quad \decomp_{\beta\subset\cB\text{ of type }\ue} C_\beta^M
\]
for the Schubert decomposition. We use the modified symbol ``$\decomp$'' in order to avoid a confusion with the disjoint union of $k$-schemes.

Note that 
\[
 \Gr_\ue(M) \quad = \quad \decomp_{\beta\subset\cB\text{ of type }\ue} C_\beta^{M,\red}
\]
is also a decomposition of $\Gr_\ue(M)$, which we call the \emph{reduced Schubert decomposition of $\Gr_\ue(M)$}.

\begin{rem}\label{rem: irrelevance of oredering of basis elements of different vertices}
 The Schubert decomposition of $\Gr_\ue(M)$ depends only on the ordering of the subsets $\cB_p$ of $\cB$ and not on the ordering of elements $b\in\cB_p$ and $b'\in\cB_{p'}$ for different $p\neq p'$. However, we choose to endow $\cB$ with a linear order (and thus superfluous information at this point) since this is needed for the Schubert decomposition of a push-forward of $M$, cf.\ Section \ref{section: push-forwards}.
\end{rem}

\begin{rem}
 Note that the Schubert cells $C_\beta(\um)$ of $\Gr_\ue(\um)$ are affine spaces as products of Schubert cells of usual Grassmannians, but that the Schubert cells $C_\beta^M$ are in general not affine spaces. Since every projective $k$-scheme can be realized as a quiver Grassmannian, it is clear that this cannot be the case. Even if there exists a decomposition of $\Gr_\ue(M)$ into affine spaces for some choice of an ordered basis, a different choice of ordered basis might yield Schubert cells of a different shape (see Example \ref{ex: union of two projective lines}). In view towards Theorem \ref{thm: reineke}, I expect that every affine $k$-scheme of finite type can appear as a Schubert cell of a quiver Grassmannian for appropriate $Q$, $M$, $\ue$ and $\beta\subset\cB$.
\end{rem}

\subsection{Schubert decompositions for acyclic quiver}
\label{subsection: schubert decompositions for acyclic quiver}

In case the quiver $Q$ is acyclic, i.e.\ without oriented cycles, we find the following alternative description of the Schubert decomposition of $\Gr_\ue(M)$, cf.\ Section 6 in \cite{Caldero-Reineke08}. Let $H=k[Q]$ be the path algebra of $Q$ and $H^\ast$ be the unit subgroup. We can embed $H^\ast$ as a subgroup of $\GL(M,k)$. Since $Q$ is acyclic, $H^\ast$ is contained in a Borel subgroup $B$ of $\GL(M,k)$. 

The choice of a Borel subgroup $B$ of $\GL(M,k)$ is equivalent to the choice of an ordered basis $\cB$ for $M$ with the property that $B$ is the subgroup of upper triangular matrices in this basis. Note that in general, the basis $\cB$ does \emph{not} satisfy Hypothesis (H).

This choice defines a Schubert decomposition
\[
 \Gr_{\norm\ue}(M) \quad = \quad \decomp_{j} \ X_j
\]
of the usual Grassmannian $\Gr_{\norm\ue}(M)$ of submodules of rank $\norm\ue$ of $M$. As explained in \cite{Caldero-Reineke08}, the subscheme $\Gr_{\norm\ue}(M)^{H^\ast}$ of fixed points equals the disjoint union of all quiver Grassmannians $\Gr_{\ue'}(M)$ with $\norm{\ue'}=\norm{\ue}$. Therefore, we yield the decomposition
\[
 \coprod_{\norm{\ue'}=\norm\ue} \ \Gr_{\ue'}(M) \quad = \quad \decomp_j \ X_j^{H^\ast},
\]
which restricts to a decomposition $\Gr_\ue(M)=\decomp \bigl(X_j^{H^\ast}\cap\Gr_\ue(M)\bigr)$ into reduced subschemes.

This decomposition coincides with the (reduced) Schubert decomposition that we have defined in the previous section. In particular, if $\cB$ satisfies Hypothesis (H), the cells $X_j^{H\ast}\cap\Gr_\ue(M)$ coincide with the reduced cells $C_\beta^{M,\red}$. This means that the decomposition $\Gr_\ue(M)=\decomp \bigl(X_j^{H^\ast}\cap\Gr_\ue(M)\bigr)$ is the same as $\Gr_\ue(M)=\decomp C_\beta^M$.

\subsection{$K$-rational points of Schubert cells}
\label{subsection: K-rational points}

 Let $K$ be a ring extension of $k$. Using the canonical covering $\{U_\beta\}$ of $\Gr_\ue(\um)$, we can describe the $K$-rational points of a Schubert cell $C_\beta^M$ as follows. 

 A $K$-rational point of $C_\beta^M$ defines a subrepresentation $V$ of $M_K=M\otimes_kK$. This subrepresentation satisfies that for every $p\in Q_0$, the submodule $V_p$ of $M_p$ is generated by a set of vectors $v^p=(v_b)_{b\in \beta_p}$ where each $v_b$ is of the form 
\[
 v_b \quad = \quad 1 \cdot b \ + \ \sum_{\substack{b'\in\cB_p-\beta_p\\ b'<b}} v_{b',b} \cdot b'
\]
for some $v_{b',b}\in K$. Note that the coefficients $v_{b',b}$ are uniquely determined by $V$. This means that a $K$-rational point $V$ corresponds to a $\norm\cB\times\norm\beta$-matrix $v$ with coefficients $v_{b',b}\in K$, which satisfy $v_{b,b}=1$ and $v_{b',b}=0$ whenever $b'>b$ or $b'\in\beta$, but $b'\neq b$. In other words, $v$ is in row echelon form and all coefficients of a row containing a pivot $1$ are zero, except for the pivot itself.

Conversely, a choice of $v_{b',b}\in K$ yields a $K$-rational point $V$ of $C_\beta(\um)$, which, however, does not have to lie in $C_\beta^M(K)$. For certain cases of $M$, we will work out the conditions on the coefficients $v_{b',b}$ to come from a $K$-rational point $V$ of $C_\beta^M$ (see Section \ref{subsection: defining equations for schubert cells}).

\subsection{Examples}
\label{subsection: examples of schubert cells}

\begin{ex}[One point quiver and usual Grassmannians]\label{ex: quiver with one vertex}
 Let $Q$ be a quiver that consists of a single point and $M=k^m$ the $Q$-module with basis $\cB=(b_1,\dotsc, b_m)$. We consider the usual Schubert decomposition 
 \[
  \Gr(e,m) \quad = \quad \decomp_{1\leq i_1<\dotsb<i_e\leq m} \ C_{i_1,\dotsc,i_e}
 \]
 where $C_{i_1,\dotsc,i_e}$ is the reduced subscheme of $\Gr(e,m)$ with $K$-rational points
 \[
  \bigr\{ \ V\subset M_K \ \bigr| \ \text{for all }l=1,\dotsc,n\text{ and }k\text{ such that }i_k\leq l <i_{k+1}, \ \dim (V_l\cap N) = k \ \bigl\}
 \]
 for any field extension $K$ of $k$ where $V_l=\vspan\{b_1,\dotsc,b_l\}$. Then $C_{i_1,\dotsc,i_e}$ can be identified with $C_\beta^M$ for $\beta=\{b_{i_1},\dotsc,b_{i_e}\}$ if $\cB$ is ordered by $b_1<\dotsc <b_e$. This shows that we recover the Schubert decomposition of usual Grassmannians as a special case. 
\end{ex}

\begin{ex}[Flag varieties]\label{ex: flag varieties}
 The same is true for flag variety if we realize them as follows. Let $\ue=(e_1,\dotsc,e_r)$ be the type of the flag variety $X=X(e_1,\dotsc,e_r)$ of subspaces of $k^m$. Let $Q$ be the quiver
 \[ 
  \xymatrix{ 1\ar[r]^{\alpha_1} & 2 \ar[r]^{\alpha_2} & \dotsb \ar[r]^{\alpha_{r-1}} & r}
 \]
 and $M$ the $Q$-module $k^m\stackrel\id\longrightarrow \dotsb \stackrel\id\longrightarrow k^m$. Then $X$ is isomorphic to $\Gr_\ue(M)$. If we order the standard basis $\cB=\{b_{k,p}\,|\,k=1\dotsc,m;\,p=1,\dotsc,r\}$ of $M$ lexicographically, i.e. $b_{k,p}<b_{l,q}$ if $p<q$ or if $p=q$ and $k<l$, then the decomposition 
 \[
  \Gr_\ue(M) \quad = \quad \decomp_{\beta\subset\cB\text{ of type }\ue} C_\beta^M
 \]
 coincides with the usual decomposition of $X$ into Schubert cells.
\end{ex}

\begin{ex}[One loop quiver]\label{ex: quiver with one vertex and one arrow}
 Let $Q$ be the quiver with one vertex $p$ and one arrow $\alpha:p\to p$.  Let $M$ be the $Q$-module given by $M_p=k^m$ with the standard basis $\cB=(b_1,\dotsc,b_m)$ and by $M_\alpha=J(\lambda)$ where $J(\lambda)$ is a maximal Jordan block with $\lambda$ on the diagonal and $1$ on the upper side-diagonal. Let $e\leq m$. Considering $K$-rational points for a field extension $K$ of $k$, one sees easily that $\Gr_e(M,K)=\A^0(K)=C_\beta^M(K)$ with $\beta=\{b_1,\dotsc,b_e\}$, and $C_\gamma^M=\emptyset$ for other subsets $\gamma\subset\cB$ of cardinality $e$. 

 This means that the reduced cell $C_\beta^{m,\red}$ is isomorphic to $\A^0$. However, the non-reduced structure of $C_\beta^M$ is more involved. For our choice of ordering, it turns out that $C_\beta^M$ is indeed reduced, while $\Gr_\ue(M)$ is not. For another choice of ordering $C_\beta^M$ might be isomorphic to the non-reduced scheme $\Gr_\ue(M)$.

 We explain this in the example $m=2$ and $e=1$. The quiver Grassmannian $\Gr_\ue(M)$ is given as the vanishing set of the homogeneous equation 
 \[
  \det \ \tinymat{X}{\lambda X+Y}{Y}{\lambda Y} \quad = \quad \lambda XY \ - \ \lambda XY \ - \ Y^2 \quad = \quad Y^2
 \]
 (also cf.\ Example 2 in \cite{Cerulli11}). This means that $\Gr_\ue(M)=\Spec\bigl( k[\epsilon]/(\epsilon^2)\bigr)$ is non-reduced. However, the cell $C_\beta^M$ is defined by the open condition $X$ invertible and the closed condition $Y=0$. The latter equation forces $C_\beta^M$ to be $\Spec k=\A^0$.

 If we reverse the order of $\cB$, i.e. $b_2>b_1$, then the unique $K$-rational point of $\Gr_\ue(M)$ is still contained in the cell $C_{\beta}^M$, but $C_\beta^M$ is only defined by the open condition $X$ invertible. This means that $C_\beta^M\simeq \Gr_\ue(M)$ is a non-reduced scheme.
\end{ex}

\begin{ex}[Kronecker quiver]\label{ex: kronecker quiver}
 Let $Q$ be the Kronecker quiver with two vertices $1$ and $2$ and two arrows $\alpha,\beta:1\to 2$. A regular representations of $Q$ is a $Q$-module $M$ with $M_1=M_2=k^{n}$, $M_\alpha=\id$ and $M_\beta=J(\lambda)$ for some positive integer $n$ and some $\lambda\in k$. By Theorem 2.2 in \cite{Cerulli-Esposito11}, the quiver Grassmannian $\Gr_\ue(M)$ decomposes into affine spaces $X_L$, which coincide with the reduced Schubert cells $C_\beta^{M,\red}$ of $\Gr_\ue(M)$ w.r.t.\ the standard ordered basis of $M$. 
\end{ex}

\begin{ex} \label{ex: union of two projective lines}
 Consider the quiver $Q=\xymatrix{1\ar[r]^\alpha& 2}$ and the module $M$ that is described as follows. Let $M_1=M_2=k^2$, and let $\cB=\{b_1,\dotsc,b_4\}$ be the standard basis, i.e.\ $b_1=\binom 10$ and $b_2=\binom 01$ in $M_1$ and $b_3=\binom 10$ and $b_4=\binom 01$ in $M_2$. Let $M_\alpha:M_1\to M_2$ be the linear map that is described by the matrix $M_\alpha=\tinymat 1000$ in the bases $\cB_1$ and $\cB_2$. For the dimension vector $\ue=(1,1)$ and the ordering $b_1<b_2<b_3<b_4$ of $\cB$, we have the decomposition of $\Gr_\ue(M)$ into the four cells $C^M_{\{b_1,b_3\}}$, $C^M_{\{b_2,b_3\}}$, $C^M_{\{b_1,b_4\}}$ and $C^M_{\{b_2,b_4\}}$. 

 We use the notation $V=\tinybmat abcd$ for the submodule $V$ of $M_K$ with dimension vector $\ue=(1,1)$ and $V_1=\langle\binom ac\rangle$ and $V_2=\langle\binom bd\rangle$ where the coefficients lie in a field extension $K$ of $k$. Bearing the condition $M_\alpha(V_1)\subset V_2$ in mind, we find the following description for the $K$-rational points of the four cells:
 \begin{align*}
  C^M_{\{b_1,b_3\}}(K) \ &= \ \Bigl\{\ \tinybmat 1100 \ \Bigr\}                         && \simeq \A^0(K)\\
  C^M_{\{b_2,b_3\}}(K) \ &= \ \Bigl\{\ \tinybmat v110 \ \Bigr| \ v\in K \ \Bigr\}       && \simeq \A^1(K)\\
  C^M_{\{b_1,b_4\}}(K) \ &= \ \Bigl\{\ \tinybmat 1w01 \ \Bigr| \ \tinyvec 10\in\vspan\{\tinyvec w1\} \ \Bigr\}          && =\emptyset \\
  C^M_{\{b_2,b_4\}}(K) \ &= \ \Bigl\{\ \tinybmat vw11 \ \Bigr| \ v=0, w\in K \ \Bigr\}  && \simeq \A^1(K).
 \end{align*}
 Thus $\Gr_\ue(M)$ is isomorphic to two projective lines that intersect in one point.

 A reordering of the $b_1$ and $b_2$ is the same as reordering the rows of the matrix $\tinymat 1000$. If we calculate the Schubert cell of $\tinymat 0010:k^2\to k^2$ with the same ordering of the basis elements as above, we find that $C^M_{\{b_1,b_3\}}=\emptyset$, that $C^M_{\{b_2,b_3\}}=C^M_{\{b_1,b_4\}}=\A^0$ and that $C^M_{\{b_2,b_4\}}$ is isomorphic to two affine lines that intersect in one point. This shows that in general, it depends on the ordering of the basis $\cB$ whether the Schubert decomposition yields affine spaces as Schubert cells or not.
\end{ex}

\subsection{Disjoint unions of quivers}
\label{subsection: disjoint unions of quivers}

A subquiver $S$ of $Q$ is a quiver such that $S_0\subset Q_0$ and  $S_1\subset Q_1$, and such that the source and target maps of $S$ and $Q$ coincide. Let $M$ be a $Q$-module with basis $\cB$. Then the \emph{restriction $M_S$ of $M$ to a subquiver $S$ of $Q$} is the $S$-module with $M_{S,p}=M_p$ for $p\in S_0$ and $M_{S,\alpha}=M_\alpha$ for $\alpha$ in $S$. The set $\cB_S=\cB\cap M_S$ is a basis for $S$. The following fact is obvious, but useful.

\begin{lemma}\label{lemma: disjoint union}
 Let $Q=S\amalg T$ be the disjoint union of $T$ and $S$ and let $M$ be a $Q$-module with ordered basis $\cB$. Let $M_S$ and $M_T$ be the restrictions of $M$ to $S$ resp.\ $T$. Then $C_\beta^M=C_{\beta_S}^{M_S}\times C_{\beta_T}^{M_T}$ for every subset $\beta\subset B$ where $\beta_S=\beta\cap M_S$ and $\beta_T=\beta\cap M_T$. \qed
\end{lemma}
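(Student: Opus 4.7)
The plan is to exploit the product structure that the disjoint union $Q = S \amalg T$ forces at every level of the construction of a Schubert cell, and simply chase the definitions through. Since no arrow of $Q$ connects a vertex of $S$ to a vertex of $T$, a subrepresentation $V \subset M_K$ decomposes uniquely as $V = V_S \oplus V_T$ with $V_S \subset M_{S,K}$ and $V_T \subset M_{T,K}$. This already identifies $\Gr_\ue(M) = \Gr_{\ue_S}(M_S) \times \Gr_{\ue_T}(M_T)$ as closed subschemes of $\Gr_\ue(\um)$, where $\ue_S$ and $\ue_T$ are the restrictions of $\ue$ to $S_0$ and $T_0$. One needs to verify this on the scheme level (not merely on points): this follows from the fact that the defining equations $M_\alpha(V_{s(\alpha)}) \subset V_{t(\alpha)}$ for $\alpha \in Q_1 = S_1 \amalg T_1$ split according to whether $\alpha$ lies in $S_1$ or $T_1$, so the universal Grassmannian factors accordingly.

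Next I would factor the ambient product Grassmannian itself. Using $Q_0 = S_0 \amalg T_0$, we have
\[
\Gr_\ue(\um) \ = \ \prod_{p\in Q_0} \Gr(e_p,m_p) \ = \ \Bigl(\prod_{p\in S_0} \Gr(e_p,m_p)\Bigr) \times \Bigl(\prod_{p\in T_0} \Gr(e_p,m_p)\Bigr) \ = \ \Gr_{\ue_S}(\um_S)\times \Gr_{\ue_T}(\um_T).
\]
For a subset $\beta \subset \cB$ of type $\ue$, we have $\beta = \beta_S \amalg \beta_T$ with $\beta_S$ of type $\ue_S$ and $\beta_T$ of type $\ue_T$. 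The Plücker coordinate $\Delta_{\beta_p}$ depends only on the factor indexed by $p$, hence the open subset $U_\beta = \{\Delta_{\beta_p} = 1 \text{ for all } p \in Q_0\}$ factors as $U_{\beta_S}\times U_{\beta_T}$. Likewise the closed conditions $\Delta_{\gamma_p}=0$ for $\gamma_p \succeq \beta_p$ are vertex-local, and the relation $\gamma \succeq \beta$ is equivalent to the conjunction of $\gamma_S \succeq \beta_S$ and $\gamma_T \succeq \beta_T$ (this uses Remark~\ref{rem: irrelevance of oredering of basis elements of different vertices}: only the orderings within each $\cB_p$ matter, so the ordering between $\cB_S$ and $\cB_T$ plays no role). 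Therefore $C_\beta(\um) = C_{\beta_S}(\um_S)\times C_{\beta_T}(\um_T)$ as locally closed reduced subschemes.

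Finally, the Schubert cell $C_\beta^M$ is the pullback of $C_\beta(\um)$ along the embedding $\Gr_\ue(M)\hookrightarrow \Gr_\ue(\um)$. Since both the subscheme $\Gr_\ue(M) = \Gr_{\ue_S}(M_S) \times \Gr_{\ue_T}(M_T)$ and the locally closed subscheme $C_\beta(\um) = C_{\beta_S}(\um_S)\times C_{\beta_T}(\um_T)$ are products, so is their intersection:
\[
C_\beta^M \ = \ \bigl(\Gr_{\ue_S}(M_S)\times \Gr_{\ue_T}(M_T)\bigr) \cap \bigl(C_{\beta_S}(\um_S)\times C_{\beta_T}(\um_T)\bigr) \ = \ C_{\beta_S}^{M_S}\times C_{\beta_T}^{M_T},
\]
which is the asserted identity. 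The only mildly subtle point worth double-checking is the compatibility of the ordering relation $\succeq$ with the factorisation, but as noted this is immediate once one remembers that $\beta \succeq \gamma$ is defined componentwise over the vertices.
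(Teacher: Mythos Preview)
Your proof is correct. The paper gives no proof at all for this lemma: it is stated as ``obvious, but useful'' and closed immediately with \qed. Your argument is precisely the routine unpacking of the definitions that the author had in mind, so there is nothing to compare.
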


\begin{ex} \label{ex: unions of one vertex quivers}
 This yields a generalization of the previous examples. Namely, if $Q$ is a quiver and $M$ a representation such that the restriction of $M$ to each connected component $S$ of $Q$ is isomorphic to one of the $S$-modules of Examples \ref{ex: quiver with one vertex}--\ref{ex: union of two projective lines}, then there is an ordered basis $\cB$ of $M$ such that 
 \[
  \Gr_\ue(M) \quad = \quad \decomp_{\beta\subset\cB\text{ of type }\ue} C_\beta^M
 \]
 is a decomposition into affine spaces.
\end{ex}

\section{Tree extensions}
\label{section: tree extensions}

In this section, we investigate Schubert cell decompositions for trees. More precisely, we prove a relative theorem for tree extension $T$ of quivers $S$ that puts the Schubert cells of the tree extension into relation to the Schubert cells of $S$.

Let $T$ be a quiver with subquiver $S$. We denote by $T-S$ the subquiver that consists of all arrows of $T$ that are not in $S$ and all vertices that are not in $S$, or that are sources or targets of an arrow in $T-S$. Note that $S$ and $T-S$ can have vertices in common, but no edge. We denote by $T/S$ the quotient quiver, which is obtained from $T$ by removing all edges in $S$ and identifying all vertices of $S$. We say that $T$ is a \emph{tree extension of $S$} if $T/S$ is a tree (as a geometric graph). 

Let $M$ be a $T$-module with ordered basis $\cB$. We write $\beta<\beta'$ for two subsets $\beta$ and $\beta'$ of $\cB$ if $b<b'$ for all $b\in\beta$ and $b'\in\beta'$, and $\beta\leq\beta'$ if $(\beta-\beta')<(\beta\cap\beta')<(\beta'-\beta)$. We write $p<q$ for two vertices $p$ and $q$ of $T$ if $\cB_p<\cB_q$. Note that the relation $\beta\leq\beta'$ differs from the relation $\beta\preceq\beta'$ from Section \ref{section: schubert cells}. We say that an ordered basis $\cB$ \emph{induces an ordering of $T$} if for all distinct vertices $p$ and $q$ of $T$ either $p<q$ or $q<p$.

Let $S$ be a subquiver of $T$. We denote the restriction of $M$ to $S$ by $M_S$. Let $\cB_S$ a basis of $M_S$ and assume that $T$ is a tree extension of $S$. An \emph{extension of $\cB_S$ to $M$} is an ordered basis $\cB$ of $M$ whose intersection with $M_S$ is $\cB_S$ as ordered sets. An basis $\cB$ of $M$ is \emph{ordered above $S$} if $\cB_S\leq \cB$, if it induces an ordering of $T$, if $p_0<\dotsb<p_r$ for all paths $(p_0,\dotsc,p_r)$ with $p_0\in S_0\cap (T-S)_0$ and $p_1,\dotsc,p_r\in T_0-S_0$ pairwise distinct and if for all $\alpha\in (T-S)_1$, the linear map $M_\alpha$ is represented by the identity matrix w.r.t.\ the ordered bases $\cB_{s(\alpha)}$ and $\cB_{t(\alpha)}$.

\subsection{Results for tree extensions}
\label{subsection: results for tree extensions}

We will prove all results together at the end of this section.

\begin{lemma}\label{lemma: basis extensions for tree extensions}
 Let $T$ be a tree extension of $S$. Let $M$ be a $T$-module such that $M_\alpha$ is an isomorphism for all arrows $\alpha$ in $T-S$. Let $M_S$ be the restriction of $M$ to $S$ and $\cB_S$ an ordered basis of $M_S$ that induces an ordering of $S$. Then there exists an extension $\cB$ of $\cB_S$ that is ordered above $S$.
\end{lemma}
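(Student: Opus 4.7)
The plan is to construct $\cB$ by transporting $\cB_S$ outward along the arrows of $T-S$, using the tree structure of $T/S$ to guarantee that the basis at each vertex of $T_0-S_0$ is determined by a unique transport step. I would begin by rooting the tree $T/S$ at the vertex $*$ that represents the collapsed subquiver $S$. Because $T/S$ has no multi-edges and no loops, the arrows in $(T-S)_1$ are in bijection with the edges of $T/S$. Consequently, every non-root vertex $p \in T_0-S_0$ has a unique parent edge in the rooted tree, corresponding to a unique arrow $\alpha(p) \in (T-S)_1$ incident to $p$, whose other endpoint $p'$ is either some vertex of $S_0\cap(T-S)_0$ (when $p$ is a child of the root) or a vertex of $T_0-S_0$ at strictly smaller depth.

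Next, I would process the vertices $p \in T_0-S_0$ in order of increasing depth from $*$. At each step, $\cB_{p'}$ is already defined, either because $p'\in S_0$ and the basis was given, or because $p'$ was processed at an earlier depth. Since $M_{\alpha(p)}$ is an isomorphism by hypothesis, I define $\cB_p$ as the ordered image $M_{\alpha(p)}(\cB_{p'})$ if $\alpha(p)\colon p'\to p$, or the ordered preimage $M_{\alpha(p)}^{-1}(\cB_{p'})$ if $\alpha(p)\colon p\to p'$. In either case $M_{\alpha(p)}$ is represented by the identity matrix with respect to $\cB_{s(\alpha(p))}$ and $\cB_{t(\alpha(p))}$. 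Every arrow of $(T-S)_1$ appears exactly once as some $\alpha(p)$, namely for its deeper endpoint, so the identity-matrix condition is arranged for all of $(T-S)_1$ simultaneously. I would then linearly order the union $\cB = \cB_S \sqcup \bigsqcup_{p \in T_0-S_0} \cB_p$ by keeping the given order on $\cB_S$ first, following it by the $\cB_p$ for $p \in T_0-S_0$ arranged in order of depth (with an arbitrary tiebreak among vertices of equal depth), and using the inherited internal order on each $\cB_p$.

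It then remains to check the four conditions of being ordered above $S$. The conditions $\cB_S\leq\cB$ and that $\cB$ induces an ordering of $T$ are immediate from the construction, and the identity-matrix condition for all $\alpha\in(T-S)_1$ was arranged in the inductive step. For the path condition, let $(p_0,\dotsc,p_r)$ be a path with $p_0\in S_0\cap(T-S)_0$ and $p_1,\dotsc,p_r\in T_0-S_0$ pairwise distinct; its images in $T/S$ are pairwise distinct (the image of $p_0$ is $*$, and the others are unchanged) and form a walk from $*$ of length $r$, hence a simple path. Since $T/S$ is a tree, this simple path must be the unique descending path from the root, so $\mathrm{depth}(p_i)=i$ in $T/S$, and the depth-based ordering yields $p_0<p_1<\dotsb<p_r$. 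The main obstacle is essentially the bookkeeping that each arrow of $(T-S)_1$ is consumed exactly once and that the descent property of trees rigidifies the path condition; both hinge on the tree hypothesis on $T/S$ together with the invertibility of the maps $M_\alpha$ for $\alpha \in (T-S)_1$.
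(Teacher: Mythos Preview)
Your proof is correct and follows essentially the same idea as the paper's: both transport $\cB_S$ along the arrows of $T-S$ using the tree structure of $T/S$ and the invertibility of the $M_\alpha$. The only organisational difference is that the paper argues by induction on $\#(T_0-S_0)$, stripping one leaf of $T$ (not in $S$) at a time and appending its basis at the end, whereas you root $T/S$ at the collapsed vertex and build outward by depth; your version has the modest advantage of making the verification of the path condition explicit, which the paper leaves implicit.
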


\begin{thm}\label{thm: tree extensions}
 Let $T$ be a tree extension of $S$. Let $M$ be a $T$-module and $M_S$ the restriction of $M$ to $S$. Let $\cB$ be an ordered basis of $M$ that is ordered above $\cB_S=\cB\cap M_S$. Let $\beta$ be a subset of $\cB$ and $\beta_S=\beta\cap\cB_S$. Then the following holds true.
 \begin{enumerate}
  \item The Schubert cell $C_\beta^M$ is empty if and only if $C_{\beta_S}^{M_S}$ is empty or if there exists an arrow $\alpha:p\to q$ in $T-S$ such that $M_\alpha(\beta_p)\not\subset\beta_q$. 
  \item If $C_{\beta}^{M}$ is not empty, then $C_\beta^M\simeq C_{\beta_S}^{M_S}\times\A^{n_\beta}$ for $\beta_S=\beta\cap M_S$ and some $n_\beta\geq0$.
 \end{enumerate}
\end{thm}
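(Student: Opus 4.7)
The plan is to induct on the number of arrows of $T - S$. The base case $T = S$ is trivial (with $n_\beta = 0$). For the inductive step, since $T/S$ is a non-trivial tree, it has a leaf, which---by the combinatorics of tree extensions---corresponds to a vertex $q \in T_0 \setminus S_0$ incident to exactly one arrow $\alpha$ of $T-S$. Let $T'$ be the subquiver obtained by removing $q$ and $\alpha$; then $T'$ is a tree extension of $S$ with one fewer arrow in $T' - S$, and the restriction of $\cB$ to $M_{T'}$ remains ordered above $\cB_S$, so the inductive hypothesis applies.

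Restricting a subrepresentation of $M$ to $T'$ yields a morphism $\pi: C_\beta^M \to C_{\beta_{T'}}^{M_{T'}}$, where $\beta_{T'} = \beta \cap M_{T'}$. A $K$-point of $C_\beta^M$ consists of a $K$-point of $C_{\beta_{T'}}^{M_{T'}}$ together with a subspace $V_q \subset M_q \otimes K$ whose echelon form has pivot set $\beta_q$ and which satisfies the subrepresentation condition across $\alpha$. By the hypothesis "ordered above $S$", $M_\alpha$ is represented by the identity matrix in the bases $\cB_{s(\alpha)}$ and $\cB_{t(\alpha)}$, which reduces the compatibility condition, after the canonical identification $\cB_{s(\alpha)} \simeq \cB_{t(\alpha)}$, to $V_{s(\alpha)} \subset V_{t(\alpha)}$. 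Expanding the echelon generators $v_b^{(s(\alpha))}$ of $V_{s(\alpha)}$ in the echelon basis $\{v_c^{(t(\alpha))}\}_{c \in \beta_{t(\alpha)}}$ and matching pivot coefficients forces $M_\alpha(\beta_{s(\alpha)}) \subset \beta_{t(\alpha)}$: otherwise the pivot~$1$ in the $b$-coordinate of some $v_b^{(s(\alpha))}$ cannot be matched by any combination of the $v_c^{(t(\alpha))}$, and the fiber of $\pi$ is empty. Combined with the inductive hypothesis applied to $C_{\beta_{T'}}^{M_{T'}}$, this establishes part~(i).

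When $M_\alpha(\beta_{s(\alpha)}) \subset \beta_{t(\alpha)}$, matching coefficients on non-pivot positions produces a triangular system expressing the coefficients $x_{c',b}^{(q)}$ indexed by pivots $b \in M_\alpha(\beta_{s(\alpha)})$ as explicit polynomials in the coefficients of $V_{T'}$ and the remaining coefficients of $V_q$ (those indexed by $\gamma := \beta_q \setminus M_\alpha(\beta_{s(\alpha)})$ when $\alpha: p \to q$; the case $\alpha: q \to p$ is analogous and yields a trivial fiber). The triangularity---and hence the solvability of the system as a polynomial identification---is ensured by the disjointness $\gamma \cap M_\alpha(\beta_{s(\alpha)}) = \emptyset$ inside $\beta_q$, which prevents any circular dependency. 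Consequently $\pi$ is a scheme-theoretically trivial affine bundle of some rank $n_\alpha \geq 0$, and combining with the inductive isomorphism $C_{\beta_{T'}}^{M_{T'}} \simeq C_{\beta_S}^{M_S} \times \A^{n'}$ yields part~(ii) with $n_\beta = n' + n_\alpha$. The main technical challenge is the careful bookkeeping of which coefficients of the generators of $V_q$ become determined by the subrepresentation condition and which remain free; once the disjointness above is identified as the mechanism blocking circular dependencies, the explicit triangular solution, and hence the trivial bundle structure, follows directly.
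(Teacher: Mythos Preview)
Your approach is essentially the same as the paper's: both argue by induction, removing a leaf vertex $q$ of $T/S$ together with its unique incident arrow $\alpha$ in $T-S$, and then analyzing the echelon-form constraints imposed by the single compatibility condition across $\alpha$. The paper likewise deduces that $M_\alpha(\beta_{s(\alpha)})\subset\beta_{t(\alpha)}$ is necessary for non-emptiness and that the residual freedom is an affine space whose coordinates are polynomial in those of $V_{T'}$, giving the product decomposition.

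One point needs care. Your parenthetical ``the case $\alpha: q \to p$ is analogous and yields a trivial fiber'' is at best ambiguous and at worst incorrect. When the removed leaf $q$ is the \emph{tail} of $\alpha$, the constraint becomes $V_q\subset V_p$ (after the identification via $M_\alpha$), and the fiber over a fixed $V_{T'}$ is generally \emph{not} a point: choosing a subspace $V_q$ with pivot set $\beta_q$ inside the already-fixed $V_p$ contributes
\[
 \sum_{b\in\beta_q}\ \#\{\, b'\in\beta_p \mid b'<b,\ b'\notin\beta_q \,\}
\]
free parameters. This is the paper's Case~II, treated separately and with its own dimension formula. The analysis is indeed parallel in spirit, but the bookkeeping of which echelon coefficients are determined versus free is genuinely different from the head case you wrote out, so you should either spell it out or at least make clear that ``analogous'' means ``another explicit affine-space count,'' not ``zero-dimensional.''
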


\begin{thm}\label{thm: Grassmannian fibrations}
 Let $T$ be a tree extension of $S$. Let $M$ be a $T$-module such that $M_\alpha$ is an isomorphism for all arrows $\alpha$ in $T-S$. Let $M_S$ be the restriction of $M$ to $S$. Let $\ue$ be a dimension vector for $T$ and $\ue_S$ the restriction of $\ue$ to $S$. Let $\kappa=\#T_0-\#S_0$. Then there is a sequence $S=T^{(0)}\subset T^{(1)} \subset\dotsb\subset T^{(\kappa)}=T$ of tree extensions of $S$ and a sequence
 \[
 \xymatrix{\Phi:\quad \Gr_\ue(M) \quad \ar@{->>}[r]^(0.43){\varphi_\kappa} & \quad \Gr_{\ue^{(\kappa-1)}}(M^{(\kappa-1)}) \quad \ar@{->>}[r]^(0.64){\varphi_{\kappa-1}} & \quad \dotsb \quad \ar@{->>}[r]^(0.28){\varphi_{1}} & \quad \Gr_{\ue^{(0)}}(M^{(0)}) \ = \ \Gr_{\ue_S}(M_S)}
 \]
 of fibre bundles $\varphi_i$ whose fibres are Grassmannians $\Gr(\tilde e_i,\tilde m_i)$ for certain integers $\tilde e_i\leq\tilde m_i$ and $i=1,\dotsc,\kappa$. Here $M^{(i)}$ and $\ue^{(i)}$ are the restrictions of $M$ resp.\ $\ue$ to $T^{(i)}$. 

 In particular, the morphism $\Phi:\Gr_\ue(M)\twoheadrightarrow\Gr_{\ue_S}(M_S)$ is smooth and projective.
\end{thm}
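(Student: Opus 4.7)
\emph{Proof plan.} The plan is to build the filtration of $T$ by adjoining one vertex of $T-S$ at a time, following the tree structure of $T/S$ rooted at the image of $S$. Since $T/S$ is a tree, we may order the arrows $\alpha_1,\dotsc,\alpha_\kappa$ of $T-S$ so that for each $i$ exactly one endpoint $p_i$ of $\alpha_i$ is a vertex lying in $(T-S)_0$ but not in $\{p_1,\dotsc,p_{i-1}\}\cup S_0$, and the other endpoint $q_i$ lies in $T^{(i-1)}_0:=S_0\cup\{p_1,\dotsc,p_{i-1}\}$. Letting $T^{(i)}$ be the subquiver of $T$ on vertices $T^{(i-1)}_0\cup\{p_i\}$ with arrows those of $T^{(i-1)}$ together with $\alpha_i$, we obtain a chain $S=T^{(0)}\subset\dotsb\subset T^{(\kappa)}=T$ in which each $T^{(i)}/S$ is a subtree of $T/S$, so each $T^{(i)}$ is again a tree extension of $S$ with the assumption that $M_{\alpha_i}$ is an isomorphism preserved.

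Next I would define $\varphi_i:\Gr_{\ue^{(i)}}(M^{(i)})\to\Gr_{\ue^{(i-1)}}(M^{(i-1)})$ as the morphism sending a subrepresentation $V^{(i)}$ to its restriction $V^{(i-1)}$ to the vertex set $T^{(i-1)}_0$ (i.e.\ forgetting the component $V_{p_i}$). Pointwise, the fibre over $V^{(i-1)}$ parametrizes subspaces $V_{p_i}\subset M_{p_i}$ of dimension $e_{p_i}$ compatible with $\alpha_i$. If $\alpha_i:p_i\to q_i$, the condition $M_{\alpha_i}(V_{p_i})\subset V_{q_i}$ becomes $V_{p_i}\subset M_{\alpha_i}^{-1}(V_{q_i})$; as $M_{\alpha_i}$ is an isomorphism, the preimage has dimension $e_{q_i}$, so the fibre is $\Gr(e_{p_i},e_{q_i})$. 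If $\alpha_i:q_i\to p_i$, the condition $M_{\alpha_i}(V_{q_i})\subset V_{p_i}$ forces $V_{p_i}$ to contain the fixed $e_{q_i}$-dimensional subspace $M_{\alpha_i}(V_{q_i})$, so the fibre is $\Gr(e_{p_i}-e_{q_i},m_{p_i}-e_{q_i})$. In either case the fibre dimension is independent of $V^{(i-1)}$, and we read off the integers $\tilde e_i,\tilde m_i$.

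To upgrade this pointwise description to a Zariski-locally trivial fibre bundle, I would invoke the universal subbundle. On $B=\Gr_{\ue^{(i-1)}}(M^{(i-1)})$ the universal subrepresentation restricts at vertex $q_i$ to a rank-$e_{q_i}$ subbundle $\cV_{q_i}\subset M_{q_i}\otimes_k\cO_B$. Applying $(M_{\alpha_i}\otimes\cO_B)^{\pm 1}$ (an isomorphism of trivial bundles by hypothesis), one obtains a locally free subsheaf inside $M_{p_i}\otimes\cO_B$ in the first case, and a locally free quotient in the second case. A direct identification then gives $\Gr_{\ue^{(i)}}(M^{(i)})\simeq\Gr_B\bigl(e_{p_i},(M_{\alpha_i}\otimes\cO_B)^{-1}(\cV_{q_i})\bigr)$ in the first case and $\Gr_{\ue^{(i)}}(M^{(i)})\simeq\Gr_B\bigl(e_{p_i}-e_{q_i},(M_{p_i}\otimes\cO_B)/M_{\alpha_i}(\cV_{q_i})\bigr)$ in the second. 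Both are relative Grassmannians of locally free sheaves on $B$, hence Grassmannian bundles, and in particular smooth and projective.

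Since each $\varphi_i$ is smooth and projective, so is the composition $\Phi$. The main obstacle is the globalization step: one must verify that the naïve fibrewise identification with a Grassmannian actually promotes to an isomorphism of $B$-schemes with the relative Grassmannian, which is precisely where the hypothesis that $M_{\alpha_i}$ is an isomorphism is essential, as it makes the relevant sub/quotient of the trivial bundle locally free of the correct constant rank.
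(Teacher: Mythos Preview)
Your argument is correct and follows the same inductive outline as the paper: peel off one leaf of $T/S$ at a time and identify the resulting map of quiver Grassmannians as a Grassmannian bundle. Your fibre computations agree with the paper's (your case $\alpha_i:q_i\to p_i$ is the paper's Case~I with fibre $\Gr(e_{p_i}-e_{q_i},m_{p_i}-e_{q_i})$, and your case $\alpha_i:p_i\to q_i$ is Case~II with fibre $\Gr(e_{p_i},e_{q_i})$).

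The genuine difference is in the globalization step. You invoke the universal subbundle $\cV_{q_i}$ on the base and identify $\Gr_{\ue^{(i)}}(M^{(i)})$ with a relative Grassmannian of a locally free sheaf, which immediately gives Zariski-local triviality, smoothness, and projectivity. The paper instead trivializes by hand: given a point $V'$ of the base, it chooses an ordered basis adapted to $V'$ so that the relevant Schubert cell is an open neighbourhood, and then appeals to Theorem~\ref{thm: tree extensions} to see that the preimage is a product $C_{\beta'}^{M'}\times\A^n$. The paper must then argue separately that the fibre carries no nilpotents, since its Schubert-cell argument a priori only controls $K$-points; it does this by observing that the defining equations are linear in the Pl\"ucker coordinates. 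Your relative-Grassmannian description sidesteps this, but you should still check (or at least remark) that the scheme structure the paper puts on $\Gr_{\ue^{(i)}}(M^{(i)})$---as a fibre of the universal Grassmannian over $R_\um(Q)$, which need not be reduced in general---matches the relative Grassmannian scheme-theoretically. This amounts to the same linearity observation: the incidence condition for the single new arrow $\alpha_i$ cuts out the relative Grassmannian inside $B\times\Gr(e_{p_i},m_{p_i})$ by equations linear in the Pl\"ucker coordinates of the second factor, so the two scheme structures agree. Your approach is cleaner algebro-geometrically; the paper's has the virtue of staying entirely within its Schubert-cell framework.
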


\begin{rem}
 Note that the sequence $S=T^{(0)}\subset T^{(1)} \subset\dotsb\subset T^{(\kappa)}=T$ and the corresponding quiver Grassmannians $\Gr_{\ue_{p}}(M_{p})$ are not unique, but depend on a choice of numbering of the vertices in $T_0-S_0$. However, the fibres $\Gr(\tilde e_p,\tilde m_p)$ are uniquely determined up to permutation of indices, and the morphism $\Phi:\Gr_\ue(M)\twoheadrightarrow\Gr_{\ue_S}(M_S)$ is canonical. In so far, Theorem \ref{thm: Grassmannian fibrations} can be seen as a Krull-Schmidt theorem for quiver Grassmannians of tree extensions.
\end{rem}

\subsection*{Proof of Lemma \ref{lemma: basis extensions for tree extensions}, Theorem \ref{thm: tree extensions} and Theorem \ref{thm: Grassmannian fibrations}}
\label{subsection: the proofs}

 All claims will be proven by an induction on $\kappa=\#(T_0-S_0)$. If $\kappa=0$, then $T=S$ and there is nothing to prove. This establishes the base case.

 If $\kappa>0$, then we choose an end of $T$ that does not lie in $S$, i.e.\ a vertex in $T_0-S_0$ that is connected to only one arrow $\alpha$. We consider the case that this vertex is the head of $\alpha$ separately from the case that it is the tail of $\alpha$.

 \bigskip\noindent
 \textbf{Case I:} There is an arrow $\alpha:p\to q$ such that $q$ is an end of $T$ that does not lie in $S$.

 \bigskip\noindent \emph{Proof of Lemma \ref{lemma: basis extensions for tree extensions}.} 
 Define $T'=T-\{q,\alpha\}$ and $M'$ as the restriction of $M$ to $T'$. By the induction hypothesis, there exists an ordered basis $\cB'$ of $T'$ that satisfies Lemma \ref{lemma: basis extensions for tree extensions}. We define $\cB_{q}:=M_{\alpha}(\cB'_{p})$ as an ordered set. Since $M_{\alpha}$ is an isomorphism, $\cB_{q}$ is a basis of $M_{q}$. We define $\cB=\cB'\cup\cB_{q}$ where the order of $\cB$ is defined such that $\cB'<\cB_{q}$. Then all claims of Lemma \ref{lemma: basis extensions for tree extensions} follow immediately.

 \bigskip\noindent \emph{Proof of Theorem \ref{thm: tree extensions}.} 
 We argue by considering $K$-rational points where $K$ is a ring extension of $k$. This will establish the statement $C_\beta^M\simeq C_{\beta_S}^{M_S}\times Z$ for a scheme $Z$ with $Z^\red=\A^{n_\beta}$. An additional argument will show that $Z$ is already reduced.

 Let $\beta\subset\cB$ and define $\beta'=\beta\cap M'$. Let $V$ be a $K$-rational point, i.e.\ a subrepresentation of $M_K=M\otimes_k K$. As explained in Section \ref{subsection: K-rational points}, $V$ can be identified with a $\norm\cB\times\norm\beta$-matrix in row echelon form with coefficients $v_{b',b}\in K$, pivots $v_{b,b}=1$ for $b\in\beta$ and $v_{b',b}=0$ if $b'>b$ or $b'\in\beta$, but $b'\neq b$.

 If we define $V_p=V\cap M_p$, then $M_\alpha(V_p)\subset V_q$ implies that pivots are mapped to pivots. Therefore if $C_\beta^M$ contains a $K$-rational point, then $M_\alpha(\beta_p)\subset\beta_q$, and the restriction $V'$ of $V$ to $T'$ is a $K$-rational point of $C_{\beta'}^{M'}$. Conversely, if $C_{\beta'}^{M'}$ contains a $K$-rational point $V'$ and  $M_\alpha(\beta_p)\subset\beta_q$, then the image $M_\alpha(V'_p)$ has generating vectors with pivots in $\beta_q$, and therefore $V'$ can be extended to a $K$-rational point of $C_\beta^M$. Since a scheme contains a $K$-rational point for some ring extension $K$ of $k$ if and only if the scheme is non-empty, this proves part \eqref{part1} of Theorem \ref{thm: tree extensions}.

 In case $C_\beta^M$ is non-empty, it contains a $K$-rational point $V$ for some ring extension $K$ of $k$. The columns of $V_q$ whose pivot corresponds to an element $M_\alpha(b)\in\beta_q$ for $b\in \beta_p$ are determined by the $b$-th column of $V_p$. All other columns can be chosen freely for $V$, which have 
 \[
  n_\beta' \quad = \quad \sum_{b\in\,(\beta_q-M_\alpha(\beta_p))} \ \#\,\{\, b'\in\cB_q \, | \, b'< b\text{ and }b'\notin\beta_q \,\}
 \]
 free coefficients. Since all equations are already defined over $k$, this establishes the isomorphism $C_\beta^{M}\simeq C_{\beta'}^{M'}\times Z$ with $Z^\red=\A^{n_\beta'}$.

 To see that the factor $\A^{n_\beta'}$ is reduced, recall that the defining equations for $V$ are linear in the coordinates of $V_q$. This is also true for the corresponding relations between the Pl\"ucker coordinates of $V$, cf.\ \cite[\S 9.1, Lemma 2]{Fulton97}. As a solution space of linear equations, the scheme $C_\beta^M$ is reduced. The finishes the proof of Theorem \ref{thm: tree extensions}.

 \bigskip\noindent \emph{Proof of Theorem \ref{thm: Grassmannian fibrations}.} 
 Let $\um$ be the dimension vector of $M$, $\ue\leq\um$ and $\ue'$ the restriction of $M$ to $T'$. We argue by considering $K$-rational points and prove that the natural morphism $\varphi:\Gr_\ue(M)\to \Gr_{\ue'}(M')$ is a fibre bundle with fibre $\Gr(\tilde e,\tilde m)$ for $\tilde e=e_q-e_p$ and $\tilde m=m_q-e_p$, up to a possible non-reduced structure sheaf of the fibre, which we will exclude by an additional argument. Using the induction hypothesis, this will establish the theorem.

 Note that in the case that $e_p>e_q$, we face the trivial case of an empty quiver Grassmannian $\Gr_{\ue}(M)$ and an empty fibre $\Gr(e_q-e_p,m_q-e_p)$. Thus we may assume that $e_q\geq e_p$. If $V$ is a $K$-rational point of $\Gr_{\ue}(M)$ for some ring extension $K$ of $k$, then $V_p$ determines an $e_p$-dimensional subspace of $V_q$ since $M_\alpha(V_p)\subset V_q$. This means that $V_q/M_\alpha(V_p)$, varies through $M_q/M_\alpha(V_p)$, which can be identified with a $K$-rational point of $\Gr(\tilde e,\tilde m)$. Therefore, the fibre $\varphi^{-1}(V)$ of every $K$-rational point $V$ of $\Gr_{\ue'}(M')$ is isomorphic to $\Gr(\tilde e,\tilde m)(K)$. 
 
 We will show that $\varphi(K)$ trivializes locally. To do so, we consider a $K$-rational point $V$ of $\Gr_{\ue}(M)$ and define $V'=\varphi(V)$. We choose a basis $\cB'$ of $M'$ and order it in such a way that $V'$ can be identified with an $\um\times\ue$-matrix in row echelon form that has pivots in the bottom rows $\beta_{p'}$ of the rows $\cB_{p'}$ for each vertex $p'$ of $T'$, i.e.\ such that $\beta_{p'}\geq\cB_{p'}$. Then the corresponding Schubert cell $C_{\beta'}^{M'}(K)$ for $\beta'=\bigcup_{p'\in T'}\beta_{p'}$ is an open neighborhood of $V'$ in $\Gr_{\ue'}(M')(K)$. 

 Further, we can assume that $\cB_p$ is ordered such that also $\beta_{q}\geq\cB_{q}$ if we extend $\cB'$ to a basis $\cB$ of $M$ by the rule $\cB_q=M_\alpha(\cB_p)$ and define $\beta_q$ as subset of $\cB_q$ that corresponds to rows that contain a pivot element of $V$. Then the Schubert cell $C_{\beta}^{M}(K)$ for $\beta=\beta'\cup\beta_q$ is an open neighborhood of $V$ in $\Gr_{\ue}(M)(K)$ and $\varphi$ restricts to a morphism $\varphi(K): C_{\beta}^{M}(K) \to C_{\beta'}^{M'}(K)$. 

 Since $\cB$ is an extension of $\cB'$ that is ordered above $T'$, we can apply Theorem \ref{thm: tree extensions} \eqref{part2} to obtain an isomorphism $C_{\beta}^{M}(K) \simeq C_{\beta'}^{M'}(K) \times \A^n(K)$ for some $n\geq0$. This shows that $\varphi$ is locally trivial, i.e.\ a fibre bundle.

 The fibre of $\varphi$ is reduced since it is given by a system of linear equations in the Pl\"ucker coordinates, cf.\ the proof of Theorem \ref{thm: tree extensions}. This finishes the proof of Theorem \ref{thm: Grassmannian fibrations}.

 \bigskip\noindent
 \textbf{Case II:} There is an arrow $\alpha:p\to q$ such that $p$ is an end of $T$ that does not lie in $S$.

 \bigskip\noindent \emph{Proof of Lemma \ref{lemma: basis extensions for tree extensions}.} 
 We proceed similar to Case I. We define $T'=T-\{p,\alpha\}$ and $M'$ as the restriction of $M$ to $T'$. By the induction hypothesis, there exists an ordered basis $\cB'$ of $T'$ that satisfies the lemma. We define $\cB_{q}:=M_{\alpha}^{-1}(\cB'_{p})$ as an ordered set. Note that $M_{\alpha}$ is an isomorphism, thus $\cB_{q}$ is a basis of $M_{q}$. We define $\cB=\cB'\cup\cB_{q}$ where the order of $\cB$ is defined such that $\cB'<\cB_{q}$. Then all claims of Lemma \ref{lemma: basis extensions for tree extensions} follow immediately.

 \bigskip\noindent \emph{Proof of Theorem \ref{thm: tree extensions}.} 
 If $C_\beta^M$ is non-empty, it contains a $K$-rational point $V$ for some ring extension $K$ of $k$. Then the restriction $V'$ of $V$ to $T'$ is a $K$-rational point of $C_{\beta'}^{M'}$, which shows that $C_{\beta'}^{M'}$ is non-empty. If $v$ is the matrix associated to $V$, then the condition $M_\alpha(V_p)\subset V_q$ shows that pivots are mapped to pivots, which means that $M_\alpha(\beta_p)\subset\beta_q$. Conversely, if $C_{\beta'}^{M'}$ contains a $K$-rational point $V'$ for some ring extension $K$ of $k$ and $M_\alpha(\beta_p)\subset\beta_q$, then we can extend $V'$ to a $T$-module $V$ by defining $V_p$ as follows: if $v'$ is the matrix associated to $V'$, then we define $V_p$ as the span of the column vectors of $v'$ that are labelled by those $b\in\beta_q$ that lie in the image $M_\alpha(\beta_p)$. This shows part \eqref{part1} of the theorem.

 Assume $C_\beta^M$ is non-empty, i.e.\ it contains a $K$-rational point $V$ with associated matrix $v$. For $b\in\beta_p$, the column vector $v_b$ of the submatrix $v^p$ of $v$ is determined by the column vector $v_{M_\alpha(b)}$ of $v^q$, up to adding a linear combination of the column vectors $v_{b'}$ of $v^q$ for which $b'\in \beta_q-M_\alpha(\beta_p)$ and $b'<M_\alpha(b)$. This yields
 \[
  n'_\beta \quad = \quad \sum_{b\in\beta_p} \ \# \{\ b'\in\beta_q \ | \ b'<M_\alpha(b)\text{ and } b'\notin M_\alpha(\beta_p) \ \}
 \]
 free coefficients. Therefore, $C_\beta^M\simeq C_{\beta'}^{M'}\times \A^{n'_\beta}$. Note that the factor $\A^{n'_\beta}$ is reduced for the same reason as explained in Case I. By the induction hypothesis, this establishes part \eqref{part2} of Theorem \ref{thm: tree extensions}.

 \bigskip\noindent \emph{Proof of Theorem \ref{thm: Grassmannian fibrations}.}
 The only difference to Case I is that $V_p$ varies while $V_q$ is fixed. Since $M_\alpha(V_p)\subset V_q$, this means that the $e_p$-dimensional $V_p$ varies in an $e_q$-dimensional space, i.e.\ the fibre of $\varphi:\Gr_\ue(M)\to\Gr_{\ue'}(M')$ is $\Gr(e_p,e_q)$. The rest of the proof is exactly as in Case I.

 \bigskip\noindent
 This finishes the proof of Lemma \ref{lemma: basis extensions for tree extensions}, Theorem \ref{thm: tree extensions} and Theorem \ref{thm: Grassmannian fibrations}.
\qed


\section{Push-forwards}
\label{section: push-forwards}

In this section, we generalize the results on Schubert cells for tree extensions to push-forwards along certain morphisms from tree extensions to other quivers.

A morphism $F:T\to Q$ of quivers is a map $F:T_0\cup T_1\to Q_0\cup Q_1$ such that $F(T_i)\subset Q_i$ for $i=0,1$ and such that for every arrow $\alpha$ in $T$, we have $F(s(\alpha))=s(F(\alpha))$ and $F(t(\alpha))=t(F(\alpha))$. We define the \emph{push-forward} $N=F_\ast M$ of a $T$-module $M$ as the $Q$-module with $N_{\tilde p}=\bigoplus_{p\in F^{-1}(\tilde p)} M_p $ for $\tilde p\in Q_0$ and 
\begin{equation}\label{eq: push-forward}
 N_{\tilde\alpha}: (n_p)_{p\in F^{-1}(\tilde p)} \ \mapsto \ (m_{q})_{q\in F^{-1}(\tilde q)} \qquad\text{with}\quad m_{q} \ = \ \sum_{\substack{\alpha\in F^{-1}({\tilde\alpha})\\ t(\alpha)=q}} M_\alpha(n_{s(\alpha)})
\end{equation}
for an arrow ${\tilde\alpha}:\tilde p\to \tilde q$ of $Q$. Note that a basis $\cB$ of $M$ is also a basis of $N=F_\ast M$.

A morphism $F:T\to Q$ of quivers is a \emph{winding} if for all arrows $\alpha\neq \alpha'$ of $T$ with $F(\alpha)=F(\alpha')$, also $s(\alpha)\neq s(\alpha')$ and $t(\alpha)\neq t(\alpha')$. Note that every inclusion of quivers is a winding and that windings are closed under compositions. Note further that the push-forward of a $T$-module $M$ along a winding $F:T\to Q$ satisfies that the sums defining the $m_{q}$ in Equation \eqref{eq: push-forward} range over at most $1$ element, and that every $n_p$ occurs in at most one of the sums defining the different $m_{q}$ for $q\in F^{-1}(\tilde q)$. In other words, $N_{\tilde\alpha}$ can be represented as a monomial block matrix whose non-zero blocks correspond to the $M_\alpha$ for $\alpha\in F^{-1}({\tilde\alpha})$.

\subsection{Defining equations for Schubert cells}
\label{subsection: defining equations for schubert cells}

In this section, we describe which matrices correspond to submodules of a push-forward $N=F_\ast M$ of a $T$-module $M$ along a winding $F:T\to Q$.

Let $\cB$ be an ordered basis of $M$ and $\beta\subset\cB$ a subset. Assume that $C_\beta^M\neq\emptyset$. For $q\in Q$, define $\cB_q=\bigcup_{p\in F^{-1}(q)} \cB_p$ and $\beta_q=\beta\cap\cB_q$. Let $K$ be a ring extension of $k$. By the observations of Section \ref{subsection: K-rational points}, every $K$-rational point $W$ of $C_\beta^{F_\ast M}$ can be identified with a matrix $w=(w_{b,b'})_{b\in\cB,b'\in\beta}$ with coefficients $w_{b,b'}\in K$ satisfying \emph{(a)} $w_{b,b'}=\delta_{b,b'}$ for $b,b'\in\beta$ where $\delta_{b,b'}$ is the Kronecker delta; \emph{(b)} $w_{b,b'}=0$ if $b\in\cB_{q}$ and $b'\in\beta_{q'}$ for distinct vertices $q$ and $q'$ of $Q$; and \emph{(c)} $w_{b,b'}=0$ if $b>b'$.

 Conversely, the column vectors of a matrix $w=(w_{b,b'})$ span a sub-$K$-module of $F_\ast M_K$, which is a sub-$Q$-module if and only if for every arrow $\tilde\alpha:\tilde p\to \tilde q$ in $Q$, we have $M_{\tilde\alpha}(W_{\tilde p})\subset W_{\tilde q}$. If we write $w_b$ for the $b$-th column vector of $w$, i.e. $w_b=(w_{b',b})_{b'\in\cB_{\tilde p}}$, then $M_{\tilde\alpha}(W_{\tilde p})\subset W_{\tilde q}$ if and only if for every $b\in\beta_{\tilde p}$, there are $\lambda_{b',b}\in K$ for $b'\in\beta_{\tilde q}$ such that 
 \begin{equation} \label{eq: condition for w being a subrepresentation1}
  M_{\tilde\alpha}(w_b) \quad = \quad \sum_{b'\in\beta_{\tilde q}} \ \lambda_{b',b} w_b'.
 \end{equation}

 We rewrite Equation \eqref{eq: condition for w being a subrepresentation1} as follows. Since $w_{b'',b'}=\delta_{b'',b'}$ for $b',b''\in\beta$, we conclude that $\lambda_{b',b}=\bigr(M_{\tilde\alpha}(w_b)\bigl)_{b'}$. Let $F^{-1}(\tilde\alpha)=\{\alpha_i:p_i \to q_i\}_{i=1,\dotsc,r}$ be the fibre of $F$ over $\tilde\alpha$. Define $w^{p',q'}$ as the submatrix $(w_{b,b'})_{b\in\cB_{p'},b'\in\beta_{q'}}$ of $w$ where $p'$ and $q'$ are vertices of $T$. Since $F$ is a winding, $M_{\tilde\alpha}$ decomposes into a direct sum of the $M_{\alpha_i}$ for $i=1,\dotsc,r$ and possibly a trivial morphism. Thus if $b\in\beta_{p_j}$ and $b'\in\beta_{q_i}$, then $\lambda_{b',b}=\bigl(M_{\tilde\alpha}(w_b)\bigr)_{b'}=\bigl(M_{\alpha_i}(w_{b'',b})_{b''\in\beta_{p_i}}\bigr)_{b'}$. If we define $M_{\alpha_i}\vert_{\beta_{q_i}}$ as the submatrix of $M_{\alpha_i}$ that contains only the $b$-th rows where $b$ is in $\beta_{q_i}$ (but all columns), then Equation \eqref{eq: condition for w being a subrepresentation1} (for varying 
$b\in \beta_{\tilde p}
$) can be expressed as
 \begin{equation} \tag*{{$E(\tilde\alpha,q_i,p_j)$}}\label{eq: condition for w being a subrepresentation2}
  M_{\alpha_i} \cdot w^{p_i,p_j} \quad = \quad \sum_{l=1}^r \ w^{q_i,q_l} \cdot  M_{\alpha_l}\vert_{\beta_{q_l}} \cdot  w^{p_l,p_j}
 \end{equation}
 for varying $i$ and $j$. 

Note that the equations of this system that correspond to rows $b\in\beta_{q_i}$ reduce to $M_{\alpha_i}(w_b)=M_{\alpha_i}(w_b)$, which is trivially satisfied. Therefore, only the equations for rows in $\cB_{q_i}-\beta_{q_i}$ yield proper conditions.

\subsection{Comparison of $C_\beta^M$ and $C_\beta^{F_\ast M}$} 
\label{subsection: comparison of M and F_ast M}

Let $F:T\to Q$ be a morphism of quivers and $M$ a $T$-module. Given an ordered basis $\cB$ of $M$ resp.\ $F_\ast M$ and a subset $\beta\subset\cB$, we like to compare the Schubert cells $C_\beta^M$ of $\Gr_\ue(M)$ and $C_\beta^{F_\ast M}$ of $\Gr_{F(\ue)}({F_\ast M})$ where $F(\ue)$ is the type of $\beta$ as a subset of ${F_\ast M}$, i.e.\ $F(\ue)=(f_{\tilde p})_{\tilde p\in Q_0}$ with $f_{\tilde p}=\#(\beta\cap {F_\ast M}_{\tilde p})=\sum_{p\in F^{-1}(\tilde p)}e_p$. 

There is a canonical closed embedding 
\[
 \iota_{F,\beta}^{M} \; : \quad C_\beta^M \quad \longrightarrow \quad C_\beta^{F_\ast M}
\]
 by sending a submodule $V$ of $M$ to the submodule $F_\ast V$ of $F_\ast M$. If $V$ is represented by the matrix $v$, then $F_\ast V$ is represented by the same matrix $v$. This defines a canonical closed embedding 
\[
 \iota_\ue^M\; : \quad \Gr_\ue(M) \quad \longrightarrow \quad \Gr_{F(\ue)}(F_\ast M). 
\]

Under a certain assumption on $F:T\to Q$, there exists a retraction to $\iota_{F,\beta}^{M}$. Namely, a morphism $F:T\to Q$ is called \emph{strictly ordered (w.r.t.\ $\cB$)} if for each pair of distinct arrows $\alpha:p\to q$ and $\alpha':p'\to q'$ of $T$ with $F(\alpha)=F(\alpha')$, we have that either $p<p'$ and $q<q'$ or $p>p'$ and $q>q'$. In other words, the ordering of $\cB$ defines a natural ordering of the arrows in the fibre $F^{-1}(\tilde\alpha)$ for every arrow $\tilde\alpha$ of $Q$. Note that every strictly ordered morphism is a winding.

For a strictly ordered winding $F:T\to Q$, we can define a morphism $\pi^M_{F,\beta}:C_\beta^{F_\ast M}\to C_\beta^M$ as follows. Let $W$ be a $K$-rational point where $K$ is a ring extension of $k$ and let $w$ be the associated $\norm\cB\times\norm\beta$-matrix with coefficients in $K$. We regard $w$ as a block matrix $(w^{p,p'})_{p,p'\in T_0}$ where $w^{p,p'}$ is the submatrix of $w$ whose rows are labelled by elements of $\cB_p$ and whose columns are labelled by elements of $\beta_{p'}$. Then $w^{p,p'}$ is the zero matrix if $F(p)\neq F(p')$ or if $p'<p$. If $\tilde\alpha:\tilde p\to \tilde q$ is an arrow in $Q$ and $F^{-1}(\tilde\alpha)=\{\alpha_i:p_i \to q_i\}_{i=1,\dotsc,r}$, then the submatrices $w^{p_i,p_j}$ satisfy Equation \ref{eq: condition for w being a subrepresentation2}. Since $F$ is strictly ordered, this reduces to 
\[
  M_{\alpha_i} \cdot w^{p_i,p_i} \quad = \quad w^{q_i,q_i} \cdot  M_{\alpha_i}\vert_{\beta_{q_i}} \cdot  w^{p_i,p_i}
\]
in case that $i=j$. This means that also the block matrix $v=(v ^{p,p'})_{p,p'\in T_0}$ with $v^{p,p}=w^{p,p}$ and $v^{p,p'}=0$ if $p\neq p'$ satisfies the Equation \ref{eq: condition for w being a subrepresentation2} for all choices of $\tilde\alpha$, $i$ and $j$. Therefore $v$ is associated to a $K$-rational point $V'$ of $C_\beta^{F_\ast M}$, which is the image $\iota_{F,\beta}^{M}(V)$ of the $K$-rational $V$ of $C_\beta^M$. This defines the morphism 
\[
 \pi^M_{F,\beta}\; :\quad C_\beta^{F_\ast M} \quad \longrightarrow \quad C_\beta^M,
\]
which is a retract to the embedding $\iota_{F,\beta}^{M}:C_\beta^M\to C_\beta^{F_\ast M}$. 

We summarize the facts of this section in the following proposition.

\begin{prop}\label{prop: inclusion of cells and the retract}
 Let $F:T\to Q$ be a morphism of quivers and $M$ a $T$-module with ordered basis $\cB$. For every subset $\beta\subset\cB$ of type $\ue$, there is a closed embedding $\iota_{F,\beta}^{M}:C_\beta^M\to C_\beta^{F_\ast M}$, which is the restriction of a closed embedding $\iota_\ue^M: \Gr_\ue(M) \to\Gr_{F(\ue)}(F_\ast M)$.

 If $F:T\to Q$ is a strictly ordered winding, then $\iota_{F,\beta}^{M}$ has a retract $\pi^M_{F,\beta}:C_\beta^{F_\ast M}\to C_\beta^M$.
 \qed
\end{prop}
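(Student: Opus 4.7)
The plan is to assemble the two maps from the discussion preceding the statement and verify they are scheme-theoretic morphisms with the claimed properties. For $\iota_\ue^M$, I would first describe it on functors of points by the rule $V\mapsto F_\ast V$, where $(F_\ast V)_{\tilde p}=\bigoplus_{p\in F^{-1}(\tilde p)} V_p$; formula \eqref{eq: push-forward} immediately gives that $F_\ast V$ is a $Q$-subrepresentation of $F_\ast M$. To promote this to a closed embedding, I would factor through the product Grassmannians
\[
 \prod_{p\in T_0}\Gr(e_p,m_p) \ \longrightarrow \ \prod_{\tilde p\in Q_0}\Gr\Bigl(f_{\tilde p},\,\sum_{p\in F^{-1}(\tilde p)}m_p\Bigr),
\]
where each factor is the standard ``direct-sum of subspaces'' closed immersion that realises a product of Grassmannians as block-diagonal subspaces of a larger Grassmannian. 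Pulling back the closed conditions cutting out $\Gr_\ue(M)$ and $\Gr_{F(\ue)}(F_\ast M)$ inside the respective products then yields $\iota_\ue^M$ as a closed embedding.

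To restrict this to Schubert cells, I would use the matrix description of Section \ref{subsection: K-rational points}. A $K$-point of $C_\beta^M$ corresponds to a block-diagonal matrix $v=(v^{p,p})_{p\in T_0}$ in row-echelon form with pivot set $\beta$. Interpreting this very same matrix with respect to the basis $\cB$ of $F_\ast M$, now grouped over $Q_0$ rather than $T_0$, the pivot pattern is unaffected, so $\iota_\ue^M(V)\in C_\beta^{F_\ast M}(K)$. This shows $\iota_\ue^M$ restricts to a closed embedding $\iota_{F,\beta}^M:C_\beta^M\to C_\beta^{F_\ast M}$.

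For the retract under the strictly ordered winding hypothesis, I would define $\pi^M_{F,\beta}$ on $K$-points by sending the matrix $w=(w^{p,p'})_{p,p'\in T_0}$ representing $W\in C_\beta^{F_\ast M}(K)$ to the block-diagonal matrix $v$ with $v^{p,p}=w^{p,p}$ and $v^{p,p'}=0$ when $p\neq p'$. The heart of the argument is to show $v$ defines a point of $C_\beta^M$. Fix an arrow $\tilde\alpha$ of $Q$ with fibre $F^{-1}(\tilde\alpha)=\{\alpha_i:p_i\to q_i\}_{i=1}^r$. For each index $l\neq i$, strict ordering forces $p_l<p_i$ to be equivalent to $q_l<q_i$, so exactly one of $w^{q_i,q_l}$ or $w^{p_l,p_i}$ has a row index strictly greater than a column index and hence vanishes by condition \emph{(c)}. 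Consequently the equation \ref{eq: condition for w being a subrepresentation2} with $i=j$ collapses to
\[
 M_{\alpha_i}\cdot w^{p_i,p_i} \ = \ w^{q_i,q_i}\cdot M_{\alpha_i}\vert_{\beta_{q_i}}\cdot w^{p_i,p_i},
\]
which is precisely the condition $M_{\alpha_i}(V_{p_i})\subset V_{q_i}$ for the $T$-subrepresentation encoded by $v$. The cases $i\neq j$ become $0=0$ for the same reason combined with the winding property $p_i\neq p_j$, $q_i\neq q_j$. Since $v$ is polynomial in $w$, the construction lifts to a scheme morphism $\pi^M_{F,\beta}$, and $\pi^M_{F,\beta}\circ\iota_{F,\beta}^M=\id$ holds tautologically because the image of $\iota_{F,\beta}^M$ already consists of block-diagonal matrices.

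The main obstacle I anticipate is the combinatorial bookkeeping in the retract step: one must verify that \emph{every} instance of \ref{eq: condition for w being a subrepresentation2}, for all choices of $(\tilde\alpha,i,j)$, is either trivially satisfied after zeroing off-diagonal blocks or reduces to an intrinsic relation on a single block $w^{p_i,p_i}$. Once strict ordering is used to eliminate the cross-terms in the sum on the right-hand side, the remainder of the argument is formal: closed embeddings of Grassmannians restrict to closed embeddings of the subschemes cut out by the $Q$-module (resp.\ $T$-module) conditions, and restrict further to the locally closed Schubert strata by the compatibility of the Plücker-coordinate conditions.
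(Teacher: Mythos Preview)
Your proposal is correct and follows essentially the same approach as the paper: the proposition is marked \qed because its proof is the discussion in Section~\ref{subsection: comparison of M and F_ast M}, which constructs $\iota_{F,\beta}^M$ via $V\mapsto F_\ast V$ (same matrix $v$) and builds the retract by zeroing the off-diagonal blocks $w^{p,p'}$, using strict ordering to collapse \ref{eq: condition for w being a subrepresentation2} for $i=j$ to the single-block equation. You supply a bit more detail than the paper does---the factorization through product Grassmannians for the closed embedding, and the explicit case split on $p_l\lessgtr p_i$ showing one of $w^{q_i,q_l}$, $w^{p_l,p_i}$ vanishes---but the argument is the same.
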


\subsection{Relevant pairs and relevant triples}
\label{subsection: relevant pairs}

In this section, we introduce relevant pairs, which index the submatrices $w^{p,q}$ of a $K$-rational point $w$ that contain (possibly) non-zero variables. Further, we introduce relevant triples, which index the equations between the $w^{p,q}$ that define a Schubert cell.

Let $\cB$ be an ordered basis of $M$ that is ordered above $S$. Assume further that $M_\alpha$ is the identity matrix for all arrows $\alpha$ in $T-S$. Assume that $F:T\to Q$ is strictly ordered w.r.t.\ this ordering of $T$. We define the following functions on the set
\[
 \Adm^2 \ = \ \{ \ (p,p') \in T \ | \ F(p)=F(p'),\, p\leq p'\text{ and } p'\notin S \ \}
\]
of \emph{relevant pairs (on $T$ w.r.t.\ $S\subset T$ and $F:T\to Q$)}. The importance of relevant pairs is the following: fix a point $w_S$ of $C_\beta^{F_\ast M_S}$; if we want to extend this to a point $w$ of $C_\beta^M$, then we have to consider the various equations $E(\tilde\alpha,q_i,p_j)$ for block matrices $w^{p,p'}$ with $p,p'\in T$. If $p,p'\in S$, then $w^{p,p'}$ is determined by $w_S$. If $p>p'$, then $w^{p,p'}=0$. Therefore we have to inspect only those $w^{p,p'}$ for which $(p,p')$ is an relevant pair.

Let $d(p,S)$ be the distance from $p$ to $S$, i.e.\ the length of a shortest path from $p$ to a vertex in $S$. Then we define the \emph{distance of an relevant pair $(p,p')$ to $S$} as the number
\[
 \delta(p,p') \ = \ \max \ \{ \ d(p,S) \,,\, d(p',S) \ \}.
\]
We define the \emph{fibre length of an relevant pair $(p,p')$} as the number
\[
 \epsilon(p,p') \ = \ \#\{ \ p''\in T_0 \ | \ F(p'')=F(p)\text{ and }p\leq p'' < p' \ \}.
\]
The function
\[
 \Psi(p,p') \ = \ \bigl( \, \epsilon(p,p'), \delta(p,p'), p' \,)
\]
defines an embedding $\Psi:\Adm^2\to\N\times\N\times T_0$, which we order lexicographically, i.e. $(\epsilon,\delta,p)\leq(\epsilon',\delta',p')$ if $\epsilon<\epsilon'$, or $\epsilon=\epsilon'$ and $\delta<\delta'$, or $\epsilon=\epsilon'$, $\delta=\delta'$ and $p\leq p'$. This defines an ordering on the set of relevant pairs resp.\ on values $\Psi(p,p')$. 

Note that $\delta$, $\epsilon$ and $\Psi$ extend to functions on all pairs $(p,p')$ with $F(p)=F(p')$ and $p\leq p'$. Note further that $\delta(p,p')=0$ if and only if $p,p'\in S_0$. We extend the ordering of the $\Psi(p,p')$ for relevant pairs $(p,p')$ to all pairs $(p,p')$ with $F(p)=F(p')$ and $p\leq p'$ by the following rules: $\Psi(p,p')<\Psi(q,q')$ if $(q,q')$ is an relevant pair and $(p,p')$ is not; pairs that are not relevant are ordered lexicographically.

We define the set of \emph{relevant triples} as 
\[
 \Adm^3 \ = \ \{ (\tilde\alpha,q,p)\in Q_1\times T_0\times T_0 \ | \ q\in F^{-1}(t(\tilde\alpha))\text{ and }p\in F^{-1}(s(\tilde\alpha)) \ \}.
\]

Every relevant triple $(\tilde\alpha,t,s)$ leads to the equation $E(\tilde\alpha,t,s)$:
 \begin{equation*} 
  \sum_{\substack{\alpha\in F^{-1}(\tilde\alpha)\\ s(\alpha)\leq s, \, t\leq t(\alpha)}} \ w^{t,t(\alpha)}  M_{\alpha'}\vert_{\beta_{t(\alpha)}}  w^{s(\alpha),s} \quad = \quad \begin{cases} M_{\alpha'}  w^{s',s} & \text{if there is an }\alpha':s'\to t\text{ in }F^{-1}(\tilde \alpha), \\ 0 & \text{otherwise.}\end{cases}
 \end{equation*}
Note that the terms $M_{\alpha'} w^{s',s}$ and $w^{t,t''} M_{\alpha''}\vert_{\beta_{t''}} w^{s,s}$ (where $\alpha'':s\to t''$ is in $F^{-1}(\tilde\alpha)$) are of particular importance for us since they are linear in $w^{s',s}$ resp.\ (partly) linear in $w^{t,t''}$.

\subsection{Triple types}
\label{subsection: fibre types}

In this section, we describe different types of relevant triples $(\tilde\alpha,t,s)$ with respect to the shape of the equation $E(\tilde\alpha,t,s)$.

\subsection*{Type 0} There is no $\alpha\in F^{-1}(\tilde\alpha)$ with $s(\alpha)\leq s$ and $t\leq t(\alpha)$.
\[
 \xymatrix@R=0,5pc@C=6pc{  \ar@{-->}[r]|{}^{\text{no arrow}}& t \\  \ar@{-->}[r]|{}^{\text{no arrow}}& \\ s \ar@{-->}[r]|{}^{\text{no arrow}}& & T \ar[dd]^F \\ \\ s(\tilde\alpha) \ar[r]^{\tilde\alpha} & t(\tilde\alpha) & Q }
\]
In this case Equation $E(\tilde\alpha,t,s)$ is trivial, i.e. $0=0$.

\subsection*{Type 1} There is an arrow $\alpha:s\to t$ in $F^{-1}(\tilde\alpha)$.
\[
 \xymatrix@R=0,5pc@C=6pc{ s \ar[r]|{}^{a}& t & T \ar[dd]^F \\ \\ s(\tilde\alpha) \ar[r]^{\tilde\alpha} & t(\tilde\alpha) & Q }
\]
In this case Equation $E(\tilde\alpha,t,s)$ is 
\[
 M_{\alpha} w^{s,s} \ = \ w^{t,t} M_{\alpha}\vert_{\beta_{t}} w^{s,s}.
\] 
Note that $\epsilon(s,s)=\epsilon(t,t)=0$.

\subsection*{Type 2} There are arrows $\alpha':s'\to t$ and $\alpha'':s\to t''$ in $F^{-1}(\tilde\alpha)$ with $s'<s$ and $t'' < t$.
\[
 \xymatrix@R=0,5pc@C=6pc{ s' \ar[r]|{}^{\alpha'}& t \\ s(\alpha) \ar@{-->}[r]^{\alpha}& t(\alpha) \\ s \ar[r]^{\alpha''}& t'' & T \ar[dd]^F \\ \\ s(\tilde\alpha) \ar[r]^{\tilde\alpha} & t(\tilde\alpha) & Q }
\]
In this case Equation $E(\tilde\alpha,t,s)$ is 
\[
 M_{\alpha'} w^{s',s} \ - \ w^{t,t''} M_{\alpha''}\vert_{\beta_{t''}} w^{s,s} \ = \  \sum_{\alpha'\leq\alpha<\alpha''} \ w^{t,t(\alpha)}  M_{\alpha'}\vert_{\beta_{t(\alpha)}}  w^{s(\alpha),s}
\] 
where $\alpha'\leq\alpha$ means that $F(\alpha')=F(\alpha)$ and $s(\alpha')\leq s(\alpha)$ resp.\ $t(\alpha')\leq t(\alpha)$. Note that consequently $\Psi(s(\alpha),s)<\Psi(s',s)$ and $\Psi(t,t(\alpha))<\Psi(t,t'')$ for all $\alpha$ with $\alpha'<\alpha<\alpha''$.

We subdivide relevant triples of Type 2 into the following subtypes.
\begin{itemize}
 \item[\bf Type 2a.] $\Psi(t,t'') < \Psi(s',s)$.
 \item[\bf Type 2b.] $\Psi(s',s) < \Psi(t,t'')$.
\end{itemize}

\subsection*{Type 3} There is an arrow $\alpha'':s\to t''$ in $F^{-1}(\tilde\alpha)$ with $t<t''$, but there is no arrow $\alpha'\in F^{-1}(\tilde\alpha)$ with $t(\alpha')=t$.
\[
 \xymatrix@R=0,5pc@C=6pc{ \ar@{-->}[r]^{\text{no arrow}}& t \\ s(\alpha) \ar@{-->}[r]^{\alpha}& t(\alpha) \\ s \ar[r]^{\alpha''}& t'' & T \ar[dd]^F \\ \\ s(\tilde\alpha) \ar[r]^{\tilde\alpha} & t(\tilde\alpha) & Q }
\]
In this case Equation $E(\tilde\alpha,t,s)$ is 
\[
 w^{t,t''} M_{\alpha''}\vert_{\beta_{t''}} w^{s,s} \ = \  - \!\!\! \sum_{\substack{\alpha\in F^{-1}(\tilde\alpha)\\t< t(\alpha)<t''}} \ w^{t,t(\alpha)}  M_{\alpha'}\vert_{\beta_{t(\alpha)}}  w^{s(\alpha),s}
\] 
Note that $\Psi(t(\alpha),t)<\Psi(t,t'')$ for all $\alpha\in F^{-1}(\tilde\alpha)$ with $t<t(\alpha)<t''$. We subdivide relevant triples of Type 3 into the following subtypes.
\begin{itemize}
 \item[\bf Type 3a.] For all arrows $\alpha\in F^{-1}(\tilde\alpha)$ with $t<t(\alpha)<t''$, we have $\Psi(s(\alpha),s)<\Psi(t,t'')$.
 \item[\bf Type 3b.] There is an arrow $\alpha\in F^{-1}(\tilde\alpha)$ with $t<t(\alpha)<t''$ and $\Psi(s(\alpha),s)>\Psi(t,t'')$.
\end{itemize}

\subsection*{Type 4} There is an arrow $\alpha':s'\to t$ in $F^{-1}(\tilde\alpha)$ with $s'<s$, but there is no arrow $\alpha''\in F^{-1}(\tilde\alpha)$ with $s(\alpha'')=s$.
\[
 \xymatrix@R=0,5pc@C=6pc{ s' \ar[r]|{}^{\alpha'}& t \\ s(\alpha) \ar@{-->}[r]^{\alpha}& t(\alpha) \\ s \ar@{-->}[r]^{\text{no arrow}}& & T \ar[dd]^F \\ \\ s(\tilde\alpha) \ar[r]^{\tilde\alpha} & t(\tilde\alpha) & Q }
\]
In this case Equation $E(\tilde\alpha,t,s)$ is 
\[
 M_{\alpha'} w^{s',s} \ = \  \sum_{\substack{\alpha\in F^{-1}(\tilde\alpha)\\s'< s(\alpha)<s}} \ w^{t,t(\alpha)}  M_{\alpha'}\vert_{\beta_{t(\alpha)}}  w^{s(\alpha),s}
\] 
Note that $\Psi(s',s(\alpha))<\Psi(s',s)$ for all $\alpha\in F^{-1}(\tilde\alpha)$ with $s'<s(\alpha)<s$. We subdivide relevant triples of Type 4 into the following subtypes.
\begin{itemize}
 \item[\bf Type 4a.] For all arrows $\alpha\in F^{-1}(\tilde\alpha)$ with $s'<s(\alpha)<s$, we have $\Psi(t,t(\alpha))<\Psi(s',s)$.
 \item[\bf Type 4b.] There is an arrow $\alpha\in F^{-1}(\tilde\alpha)$ with $s'<s(\alpha)<s$ and $\Psi(t,t(\alpha))>\Psi(s',s)$.
\end{itemize}

\subsection*{Type 5} There is no arrow $\alpha\in F^{-1}(\tilde\alpha)$ with $s(\alpha)=s$ or $t(\alpha)=t$, but there is an arrow $\alpha\in F^{-1}(\tilde\alpha)$ with $s(\alpha)<s$ and $t<t(\alpha)$
\[
 \xymatrix@R=0,5pc@C=6pc{ \ar@{-->}[r]|{}^{\text{no arrow}}& t \\ s(\alpha) \ar[r]^{\alpha}& t(\alpha) \\ s \ar@{-->}[r]^{\text{no arrow}}& & T \ar[dd]^F \\ \\ s(\tilde\alpha) \ar[r]^{\tilde\alpha} & t(\tilde\alpha) & Q }
\]
In this case Equation $E(\tilde\alpha,t,s)$ is 
\[
 \sum_{\substack{\alpha\in F^{-1}(\tilde\alpha)\\s(\alpha)<s, \, t< t(\alpha)}} \ w^{t,t(\alpha)}  M_{\alpha'}\vert_{\beta_{t(\alpha)}}  w^{s(\alpha),s}  \ =  \ 0.
\]

\subsection{The main theorem}
\label{subsection: push-forward theorem}

In the context of a tree extension $T$ of $S$, a $T$-module $M$ with ordered basis $\cB$ that is ordered above $S$ and a morphism $F:T\to Q$, we will formulate the following hypothesis. Denote by $I^s(\tilde\alpha)$ the set of $p\in F^{-1}(s(\tilde\alpha))$ such that there is no $\alpha\in F^{-1}(\tilde\alpha)$ with $p=s(\alpha)$. Denote by $I^t(\tilde\alpha)$ the set of $p\in F^{-1}(t(\tilde\alpha))$ such that there is no $\alpha\in F^{-1}(\tilde\alpha)$ with $p=t(\alpha)$.

\subsection*{Hypothesis (H)} The morphism $F:T\to Q$ is strictly ordered. It satisfies the following condition for every relevant pair $(p,p')\in\Adm^2$ and $\tilde p=F(p)$:
\begin{itemize}
 \item for all $\tilde\alpha:\tilde p \to \tilde q$ in $Q$ with $p'\in I^s(\tilde\alpha)$ and for all $q\in F^{-1}(\tilde q)$, the triple $(\tilde\alpha,q,p')$ is of type 0;
 \item for all other $\tilde\alpha:\tilde p \to \tilde q$ in $Q$, there is an arrow $\alpha:p\to q$ in $F^{-1}(\tilde\alpha)$ such that $(\tilde\alpha,q,p')$ is of type 1 or 2b;
 \item for all $\tilde\alpha:\tilde q \to \tilde p$ in $Q$ with $p\in I^t(\tilde\alpha)$ and for all $q'\in F^{-1}(\tilde q)$, the triple $(\tilde\alpha,p,q')$ is of type 0;
 \item for all other $\tilde\alpha:\tilde q \to \tilde p$ in $Q$, there is an arrow $\alpha:q'\to p'$ in $F^{-1}(\tilde\alpha)$ such that $(\tilde\alpha,p,q')$ is of type 1 or 2a;
\end{itemize}
with one of the following two possible exceptions:
\begin{enumerate}
 \item there is one arrow $\tilde\alpha:\tilde p \to \tilde q$ and an arrow $\alpha:p\to q$ in $F^{-1}(\tilde\alpha)$ such that $(\tilde\alpha,q,p')$ is of type 2a or 4a; if $\alpha\in S_1$, then $M_\alpha$ is the identity matrix; or
 \item there is one arrow $\tilde\alpha:\tilde q \to \tilde p$ and an arrow $\alpha':q'\to p'$ in $F^{-1}(\tilde\alpha)$ such that $(\tilde\alpha,p,q')$ is of type 2b or 3a.
\end{enumerate}

\begin{thm}\label{thm: push-forward}
 Let $T$ be a tree extension of $S$ and $M$ a $T$-module with ordered basis $\cB$ that is ordered above $S$. Let $M_S$ be the restriction of $M$ to $S$ and $\cB_S=\cB\cap M_S$. Let $F:T\to Q$ be a morphism that satisfies Hypothesis (H). Let $n_\beta\geq0$ be the integer such that $C_\beta^M\simeq C_{\beta_S}^{M_S}\times \A^{n_\beta}$ where $\beta_S=\beta\cap\cB_S$ (cf.\ Theorem \ref{thm: tree extensions}).
 
 Then there is an integer $n_{F,\beta}\geq 0$ and an isomorphism $C_\beta^{F_\ast M}\simeq C_{\beta_S}^{F_\ast M_S}\times \A^{n_{\beta}}\times \A^{n_{F,\beta}}$ such that 
  \[
   \xymatrix@R=3pc{C_\beta^M \ar[rr]^(0.45)\sim \arincl[d]_{\iota_{F,\beta}^M}  && C_{\beta_S}^{M_S}\times\A^{n_{\beta}} \arincl[d]^{\bigl(\iota_{F,\beta_S}^{M_S},\id\bigr)} \\ C_\beta^{F_\ast M} \ar[rr]^(0.35)\sim && C_{\beta_S}^{F_\ast M_S}\times\A^{n_{F,\beta}}\times\A^{n_{\beta}}}
  \]
  commutes. In particular, $C_\beta^M$ is empty if and only if $C_\beta^{F_\ast M}$ is empty.
\end{thm}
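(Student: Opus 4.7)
The plan is to analyze the fibres of the retraction $\pi^M_{F,\beta}:C_\beta^{F_\ast M}\to C_\beta^M$ from Proposition \ref{prop: inclusion of cells and the retract} and prove that $\pi^M_{F,\beta}$ is trivializable as an affine bundle of rank $n_{F,\beta}$. Combined with Theorem \ref{thm: tree extensions}, this gives the claimed product decomposition, and commutativity of the square is automatic since the top row comes from the diagonal blocks $w^{p,p}$ while $\iota_{F,\beta}^{M}$ and $\iota_{F,\beta_S}^{M_S}$ both act by ``extension by zero'' on the off-diagonal blocks.

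To describe the fibre over a $K$-rational point $V$ of $C_\beta^M$, I parameterize a $K$-point $w\in C_\beta^{F_\ast M}$ by its block decomposition $(w^{p,p'})$ from Section \ref{subsection: defining equations for schubert cells}. The diagonal blocks $w^{p,p}$ are prescribed by $V$, and $w^{p,p'}=0$ whenever $F(p)\neq F(p')$ or $p>p'$. Thus only the blocks $w^{p,p'}$ indexed by relevant pairs $(p,p')\in\Adm^2$ with $p<p'$ and $p'\notin S_0$ carry free data; call these the \emph{off-diagonal} blocks. I will induct on these blocks in the order dictated by $\Psi$, and show that at each step Hypothesis (H) forces the new equations $E(\tilde\alpha,q,p')$ and $E(\tilde\alpha,p,q')$ involving $w^{p,p'}$ to take one of the following shapes: either trivial (type $0$), automatic from the diagonal data (type $1$, using that $M_\alpha=\id$ for $\alpha\in (T-S)_1$ and that the diagonal data satisfies the relations for $C_\beta^M$), or a linear equation in $w^{p,p'}$ whose inhomogeneous part depends only on blocks of strictly smaller $\Psi$-value (types $2\mathrm{a}$, $2\mathrm{b}$, $3\mathrm{a}$, $4\mathrm{a}$).

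The core of the argument is the Type analysis in Section \ref{subsection: fibre types}: in each equation $E(\tilde\alpha,q,p')$ the two ``distinguished'' terms $M_{\alpha'}w^{s',s}$ and $w^{t,t''}M_{\alpha''}|_{\beta_{t''}}w^{s,s}$ are respectively linear in the blocks of maximal $\Psi$-value, while all remaining terms have strictly smaller $\Psi$-value. For types $2\mathrm{b}$, $3\mathrm{a}$ the higher $\Psi$-block is $w^{t,t''}$ and appears linearly with coefficient given by $M_{\alpha''}|_{\beta_{t''}}$ times a pivot block of $w^{s,s}$, which after reducing modulo the Schubert normal form becomes an identity on the relevant rows (since pivots in $\beta_{t''}$ are mapped to pivots by the winding hypothesis combined with the pivot structure of $w^{s,s}$); symmetrically for types $2\mathrm{a}$, $4\mathrm{a}$ the block $w^{s',s}$ appears with $M_{\alpha'}$ acting on its pivot rows. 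Hence each such equation expresses a sub-block of the new variable as an affine-linear function of previously fixed data, and the remaining entries are free. The contribution of free coordinates to the total dimension $n_{F,\beta}$ is exactly the sum over relevant pairs $(p,p')$ with $p<p'$ of the free coefficients, and independence of choices of section follows from the retract $\pi^M_{F,\beta}$.

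The main obstacle is the bookkeeping of the two exceptional clauses in (H): in each case one is allowed a single relevant triple of type $2\mathrm{a}$, $2\mathrm{b}$, $3\mathrm{a}$ or $4\mathrm{a}$ that deviates from the default pattern. I will handle this by checking that in the exceptional clause (i) the additional equation is still linear in $w^{p,p'}$ with coefficient $M_\alpha$ equal to the identity (this is exactly why the hypothesis insists that $M_\alpha=\id$ whenever $\alpha\in S_1$), so it merely imposes a further affine-linear relation compatible with the inductive solution, and symmetrically for clause (ii). Types $3\mathrm{b}$, $4\mathrm{b}$ and $5$ are outlawed by (H), which is precisely what prevents quadratic obstructions in the variables $w^{p,p'}$ from appearing. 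Once the induction closes, $C_\beta^{F_\ast M}$ is cut out of $C_\beta^M\times\A^N$ (with $N$ the total number of off-diagonal entries) by a triangular system of affine-linear equations, hence is an affine bundle of the form $C_\beta^M\times\A^{n_{F,\beta}}$; substituting $C_\beta^M\simeq C_{\beta_S}^{M_S}\times\A^{n_\beta}$ and checking compatibility with the analogous decomposition of $C_\beta^{F_\ast M_S}$ produces the commutative square, and the final statement on emptiness follows from the existence of the section $\iota_{F,\beta}^{M}$.
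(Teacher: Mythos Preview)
Your overall strategy---induct on relevant pairs in the $\Psi$-order and solve one linear equation per step---is the paper's strategy, but your write-up misidentifies the crux of the argument in a way that would break the induction.

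You claim that for each allowed type $2\mathrm{a}$, $2\mathrm{b}$, $3\mathrm{a}$, $4\mathrm{a}$ the equation $E(\tilde\alpha,\cdot,\cdot)$ is ``linear in $w^{p,p'}$ with inhomogeneous part depending only on blocks of strictly smaller $\Psi$-value''. This is false for the \emph{non-exceptional} clauses. Take an outgoing arrow $\tilde\alpha$ in the default case: Hypothesis~(H) says $(\tilde\alpha,q,p')$ is of type~$2\mathrm{b}$, which by definition means $\Psi(p,p')=\Psi(s',s)<\Psi(t,t'')$, so the block $w^{t,t''}$ in that equation has \emph{larger} $\Psi$-value than $w^{p,p'}$. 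The whole point of the non-exceptional clauses is that $(p,p')$ is \emph{not} the maximal index in those equations, hence they impose no constraint on $w^{p,p'}$ at the current step---they are deferred until you reach the larger block. The only equation in which $(p,p')$ is maximal comes from the single exceptional arrow allowed by clauses (i) or (ii), and it is \emph{that one} equation (now with $(p,p')$ genuinely on top) that you solve linearly. Without this observation you would be trying to solve several equations simultaneously for $w^{p,p'}$ with no compatibility argument.

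There is also a mismatch in your base. You analyze the fibre of $\pi^M_{F,\beta}$ over $C_\beta^M$, but that fibre contains the off-diagonal blocks $w^{p,p'}$ with $p,p'\in S_0$ and $F(p)=F(p')$, which are not relevant pairs and are not governed by~(H). The paper instead fixes a point of $C_{\beta_S}^{F_\ast M_S}$---which already absorbs all $S$-blocks, diagonal and off-diagonal---and varies only the relevant-pair blocks. This is why the conclusion reads $C_\beta^{F_\ast M}\simeq C_{\beta_S}^{F_\ast M_S}\times\A^{n_\beta}\times\A^{n_{F,\beta}}$ rather than $C_\beta^{F_\ast M}\simeq C_\beta^M\times\A^{n_{F,\beta}}$; the latter need not hold when $F|_{S_0}$ is not injective.
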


\begin{proof}
 To begin with, note that $C_\beta^M$ is empty if and only if $C_\beta^{F_\ast M}$ is empty since $F:T\to Q$ is strictly ordered, and we have a retraction $\pi_{F,\beta}^M$ to $\iota^m_{F,\beta}:C_\beta^M\to C_\beta^{F_\ast M}$ (see Proposition \ref{prop: inclusion of cells and the retract}). Thus if $C_\beta^M$ is empty, the statement of the theorem is trivial, and we can assume that both $C_\beta^M$ and $C_\beta^{F_\ast M}$ are not empty.
 
 As we remarked before, a $K$-rational point $W$ of $C_\beta^{F_\ast M}$ with associated matrix $w=(w_{b,b'})$ is determined by the choice of a $K$-rational point $W_S$ of $C_\beta^{F_\ast M_S}$ (which might be thought as the restriction of $w$ to $S$) and by the coefficients of the submatrices $w^{p,p'}$ of $w$ where $(p,p')$ ranges through $\Adm^2$. The submatrices $w^{p,p'}$ are subject to the equations $E(\tilde\alpha,t,s)$ for relevant triples $(\tilde\alpha,t,s)$.

 We prove by induction over $\Psi\in\N\times\N\times T_0$ (with $\Psi\geq (1,0,p)$ for some $p\in T$) that the possible solutions for $w$ in the coefficients $w^{p,p'}$ with $\Psi(p,p')\leq\Psi$ satisfy the claim of the theorem for some $n_{\Psi,\beta}$ in place of $N_{F,\beta}$. 

 We establish base case $\Psi=(1,0,p)$ (for some $p\in T$). An relevant pair ${p,p'}$ satisfies $\Psi(p,p')\leq \Psi$ if and only if $p=p'$. This means that we have to find to a given $W_S$ solutions in the submatrices $w^{p,p}$ with $p\in T_0-S_0$. But this is the situation of Theorem \ref{thm: tree extensions} for $S$ replaced by $F(S)$ and $M_S$ replaced by $F_\ast M_S$. Thus the claim of the theorem for $\Psi$ with $n_{\Psi,\beta}=0$.

 Consider an relevant pair $(p,p')$ with $p\neq p'$. We will deduce the claim of the theorem for $\Psi(p,p')$ by use of the inductive hypothesis. To find the solutions in $w^{p,p'}$ depending on the $w^{q,q'}$ with $\Psi(q,q')<\Psi(p,p')$, we have to consider all those equations $E(\tilde\alpha,t,s)$ in which $w^{p,p'}$ appears as the submatrix with the largest value $\Psi(p,p')$. By Hypothesis (H), all relevant triples $(\tilde\alpha,t,s)$ are of type 0, 1, 2a, 2b, 3a or 4a. Note that for these types either $E(\tilde\alpha,t,s)$ is trivial (type 0) or the term of $E(\tilde\alpha,t,s)$ with the largest relevant pair $(q,q')$ appearing as an index is either $M_{\alpha'} w^{s',s}$ (\emph{case (a)}) or $w^{t,t''} M_{\alpha''}\vert_{\beta_{t''}} w^{s,s}$ (\emph{case (b)}). In case (a), $(s,s')=(q',q)$ and there is an arrow $\alpha':s'\to t$ in $F^{-1}(\tilde\alpha)$. In case (b), $(t,t'')=(q,q'))$ and there is an arrow $\alpha'':s\to t''$ in $F^{-1}(\tilde\alpha)$. 

 Let $E(\tilde\alpha,t,s)$ be an equation in which $(p,p')$ appears as the largest index. Consider $(\tilde\alpha,t,s)$ of type 2, i.e.\
 \begin{itemize}
  \item[(a)] there are arrows $\alpha:p\to q$ and $\alpha':p'\to q'$ in $F^{-1}(\tilde\alpha)$, or  
  \item[(b)] there are arrows $\alpha:q\to p$ and $\alpha':q'\to p'$ in $F^{-1}(\tilde\alpha)$. 
 \end{itemize}
 Since $(p,p')$ is relevant and since it is the index with the largest value $\Psi(p,p')$, the triple $(\tilde\alpha,t,s)$ is of type 2a in case {(a)} and of type 2b in case {(b)}. This means that none of the ``non-exceptional'' cases of Hypothesis (H) lead to an equation in which $(p,p')$ appears as its largest index.

 In the exceptional cases \eqref{part1} and \eqref{part2} of Hypothesis (H), we face indeed equations $E(\tilde\alpha,t,s)$ in which ${p,p'}$ as the largest index. Before we proceed inspecting the different types of exceptions, we will explain how to solve Equation $E(\tilde\alpha,t,s)$ in $w^{s',s}$ (case (a)) resp.\ $w^{t,t''}$ (case (b)).

 We inspect an relevant triple $(\tilde\alpha,t,s)$ in case (a), i.e.\ $(s',s)=(p,p')$ is the largest index that occurs in $E(\tilde\alpha,t,s)$ and there is an arrow $\alpha:p\to q$ in $F^{-1}(\tilde\alpha)$ with $q=t$. Then $(p,p')$ occurs twice in $E(\tilde\alpha,t,s)$; namely, in the term $M_\alpha w^{p,p'}$ and in the term $w^{q,q} M_{\alpha}\vert_{\beta_q}w^{p,p'}$. We will see that in all possible cases that we have to take into account, $M_\alpha$ is the identity matrix. Therefore the terms in question reduce to $w^{p,p'}$ and $w^{q,q}w^{p,p'}\vert_{\beta_q}$. Note that if the identity matrix $M_\alpha$ does not map $\beta_p$ into $\beta_q$, then $C_\beta^M$ and $C_\beta^{F_\ast M}$ are empty. Therefore, we can assume that all coordinates of $w^{p,p'}\vert_{\cB_q-\beta_q}$ are free. This means that we can make an arbitrary choice for the non-zero coordinates of $w^{p,p'}\vert_{\beta_q}$ and solve $E(\tilde\alpha,t,s)$ in $w^{p,p'}\vert_{\cB_q-\beta_q}$. The solution space is therefore an affine 
space. 

 Since we can write a special solution in $w^{p,p'}$ in terms of polynomials in the coefficients of the other matrices $w^{q,q'}$ appearing in $E(\tilde\alpha,t,s)$, it is clear that the fibration that is given by attaching the solution space in the coefficients of $w^{p,p'}$ to a set of coordinates for $w^{q,q'}$ with $\Psi(q,q')<\Psi(p,p')$ is a trivial vector bundle, i.e.\ there is an $n_{\Psi(p,p',\beta}\geq 0$ such that the solution space in all $w^{q,q'}$ with $\Psi(q,q')\leq\Psi(p,p')$ equals the solution space in all $w^{q,q'}$ with $\Psi(q,q')<\Psi(p,p')$ times $\A^{n_{\Psi(p,p'),\beta}}$. This also shows that the diagram of the theorem, restricted to the coordinates of the $w^{q,q'}$ with $\Psi(q,q')\leq\Psi(p,p')$, commutes.

 We inspect an relevant triple $(\tilde\alpha,t,s)$ in case (b), i.e.\ $(t,t'')=(p,p')$ is the largest index that occurs in $E(\tilde\alpha,t,s)$ and there is an arrow $\alpha':q'\to p'$ in $F^{-1}(\tilde\alpha)$ with $q'=s$. Then $(p,p')$ occurs in the term $w^{p,p'}M_{\alpha}\vert_{\beta_{p'}}w^{q',q'}$. Since $(p,p')$ is relevant, $p'\notin S_0$ and $\alpha'\notin S_1$. Thus we can assume that $M_{\alpha'}$ is the identity matrix. As in case (a), $M_{\alpha'}(\beta_{q'})\subset\beta_{p'}$ if $C_\beta^{F_\ast M}$ is not empty. Therefore, the identity matrix $(w_{b,b'})_{b,b'\in\beta_{p'}}$ occurs as a submatrix of $w^{q',q'}\vert_{\beta_{p'}}=M_{\alpha}\vert_{\beta_{p'}}w^{q',q'}$. This means that every equation that appears in the matrix of equations $E(\tilde\alpha,t,s)$ contains a linear term $w_{b,b'}$ where $b\in\cB_{p}$ and $b'\in\beta_{p'}$ and that all these linear terms are pairwise different. This allows to solve $E(\tilde\alpha,t,s)$ in $w^{p,p'}$ and the solution space is an affine space.

 For the same reasons as explained in case $(a)$, the claim of the theorem, restricted to the coordinates of the $w^{q,q'}$ with $\Psi(q,q')\leq\Psi(p,p')$, follows from the preceding.

 Finally, we have to observe that the exceptional cases \eqref{part1} and \eqref{part2} of Hypothesis (H) lead indeed to the two situations (a) and (b) as considered above. In the exceptional case \eqref{part1}, there is only one exceptional arrow $\tilde\alpha:\tilde p \to \tilde q$ that connects to $\tilde p$. Further there is an arrow $\alpha:p\to q$ in $F^{-1}(\tilde\alpha)$ such that $(\tilde\alpha,q,p')$ is of type 2a or 4a and such that $M_\alpha$ is the identity matrix. This means that the relevant pair $(p,p')$ appears only in $E(\tilde\alpha,q,p')$ as largest index. All assumptions that were made in the discussion of case (a) are satisfied for types 2a and 4a. Therefore, we can do the induction step for relevant pairs $(p'p')$ in the exceptional case \eqref{part1}.

In the exceptional case \eqref{part2}, there is only one exceptional arrow $\tilde\alpha:\tilde q \to \tilde p$ that connects to $\tilde p$. Further there is an arrow $\alpha':q'\to p'$ in $F^{-1}(\tilde\alpha)$ such that $(\tilde\alpha,p,q')$ is of type 2b or 3a. This means that the relevant pair $(p,p')$ appears only in $E(\tilde\alpha,p,q')$ as largest index. All assumptions that were made in the discussion of case (b) are satisfied for types 2b and 3a. Therefore, we can do the induction step for relevant pairs $(p'p')$ in the exceptional case \eqref{part2}.

This finishes the proof of the theorem.
\end{proof}

\subsection{Examples and non-examples}
\label{subsection: examples and counter examples}

In this section, we will consider some examples for Theorem \ref{thm: push-forward}. To start with, we will show two examples that fail to satisfy Hypothesis (H) as well as the claim of the theorem, which shows the significance of Hypothesis (H). Let $T$ be a quiver. A $T$-module $M$ is \emph{thin} if $\rk M_p\leq 1$ for every vertex $p$ of $T$, and $M$ is \emph{sincere} if $\rk M_p\geq 1$ for every vertex $p$ of $T$. 

\begin{ex}
 \label{ex: Kronecker quiver again}
 Example \ref{ex: kronecker quiver} provides an example of a push-forward module such that the according Schubert cells of the quiver Grassmannian are not reduced. We consider the representation $N$ of the Kronecker quiver $Q$ of dimension vector $(2,2)$ whose linear maps are the identity matrix and the Jordan block $J(0)$ with eigenvalue $0$. This $Q$-module is the push-forward $F_\ast M$ a representation $M$ of the tree $T$ along the  morphism
 \[
  \begin{tikzpicture}[description/.style={fill=white,inner sep=2pt}]
  \matrix (m) [matrix of math nodes, row sep=1em, column sep=2.5em, text height=1.5ex, text depth=0.25ex]
   { 2 & & 1 \\
     4 & & 3 && T \\ 
               \\
     \bullet & & \bullet && Q\\};
   \path[->,font=\scriptsize]
   (m-1-1) edge node[auto] {$\alpha_1$} (m-1-3)
           edge node[below] {$\gamma$} (m-2-3)
   (m-2-1) edge node[below] {$\alpha_2$} (m-2-3)
   (m-4-1.20) edge node[auto] {$\tilde\alpha$} (m-4-3.160)
   (m-4-1.340) edge node[below] {$\tilde\gamma$} (m-4-3.200)
   (m-2-5) edge node[auto] {$F$} (m-4-5) ;
  \end{tikzpicture}
 \]
 that maps $\alpha_1$ and $\alpha_2$ to $\tilde\alpha$ and $\gamma$ to $\tilde\gamma$. Set $S=\{1\}$ and define $M$ as the thin sincere $T$-module whose basis $\cB$ is identified with the vertex set $\{1,2,3,4\}$ of $T$ and whose linear maps send basis elements to basis elements.  Then $N=F_\ast M$. We use the obvious ordering of $\cB$ and consider $\beta=\{3,4\}$. 

 We re-calculate the Schubert cell $C_\beta^{F_\ast M}$ from Example \ref{ex: kronecker quiver}. Note that the equation $E(\tilde\alpha,t,s)$ is non-trivial  only if both $\beta_s$ and $\cB_t-\beta_t$ are non-empty. In this example, the Schubert cell $C_\beta^{F_\ast M}$ is thus determined as the subscheme of matrices
 \[
  \begin{pmatrix} w_{1,2} & 0 \\ 1 & 0 \\ 0 & w_{3,4} \\ 0 & 1 \end{pmatrix} 
 \]
 that satisfy the two Equations
 \begin{align*}
  E(\tilde\alpha,1,4) && w_{2,4} &= w_{1,3} \cdot 1 \\
  E(\tilde\gamma,1,4) && 0 &= w_{1,3} \cdot w_{2,4}. \\
 \end{align*}
 This means that $C_\beta^{F_\ast M}=\Spec\bigl(k[w_{2,4}]/(w_{2,4}^2)\bigr)$, which is a non-reduced scheme and thus not an affine space. Note that the fibre of $\tilde\gamma$ is of type 5, and therefore $F$ fails to satisfy Hypothesis (H).
\end{ex}

\begin{ex}
 \label{ex: cone as schubert cell}
 While the quiver Grassmannian of Example \ref{ex: Kronecker quiver again} has a reduced Schubert decomposition into affine spaces, the following is an example of a strictly ordered morphism with a triple of type 5 that leads to a reduced non-empty Schubert cell that is not an affine space. Consider 
 \[
  \begin{tikzpicture}[description/.style={fill=white,inner sep=2pt}]
  \matrix (m) [matrix of math nodes, row sep=1em, column sep=5em, text height=1.5ex, text depth=0.25ex]
   {   & 4 & 1 \\
     6 & 5 & 2 \\
       & 7 & 3 & T \\ 
                   \\
     \bullet & \bullet & \bullet & Q\\};
   \path[->,font=\scriptsize]
   (m-1-2) edge node[auto] {$\alpha_1$} (m-2-3)
   (m-2-2) edge node[auto] {$\alpha_2$} (m-3-3)
   (m-2-1) edge node[auto] {$\gamma$} (m-2-2)
           edge node[below] {$\delta$} (m-3-2)
   (m-5-1.20) edge node[auto] {$\tilde\gamma$} (m-5-2.160)
   (m-5-1.340) edge node[below] {$\tilde\delta$} (m-5-2.200)
   (m-5-2) edge node[auto] {$\tilde\alpha$} (m-5-3)
   (m-3-4) edge node[auto] {$F$} (m-5-4) ;
  \end{tikzpicture}
 \]
 and define $S=\{1,2,3\}$. Then $T$ is a tree extension of $S$. Define $M$ as the thin sincere $T$-module with ordered basis $\cB=\{1,\dotsc,7\}$ and identity matrices as morphisms. For the subset $\beta=\{2,3,7\}$, we have only one non-trivial equation
 \begin{align*}
  E(\tilde\alpha,1,7) && 0 &= w_{1,2} w_{4,7} + w_{1,3} w_{5,7},
 \end{align*}
 which shows that $C_\beta^{F_\ast M}$ is $\Spec\bigl(k[w_{1,2}, w_{1,3}, w_{4,7}, w_{5,7}]/(w_{1,2} w_{4,7} + w_{1,3} w_{5,7})\bigr)$, which is a reduced cone with a singularity at the origin. Thus $C_\beta^{F_\ast M, \red}$ is not isomorphic to an affine space.
\end{ex}

\begin{ex}
\label{ex: exceptional modules of the Kronecker quiver} 
 Consider the strictly ordered morphism
  \[
  \begin{tikzpicture}[description/.style={fill=white,inner sep=2pt}]
  \matrix (m) [matrix of math nodes, row sep=0em, column sep=5em, text height=1.0ex, text depth=0.25ex]
   {  & 1 \\
     2    \\
      & 3 \\ 
     4    \\
         \ \\
     2n    \\
      & 2n+1 & T \\ 
              \ \\ \ \\
     \bullet & \bullet & Q\\};
   \path[->,font=\scriptsize]
   (m-2-1) edge node[auto] {$\alpha_1$} (m-1-2)
           edge node[below] {$\gamma_1$} (m-3-2)
   (m-4-1) edge node[below] {$\alpha_2$} (m-3-2)
   (m-6-1) edge node[auto] {$\gamma_n$} (m-7-2)
   (m-10-1.20) edge node[auto] {$\tilde\alpha$} (m-10-2.160)
   (m-10-1.340) edge node[below] {$\tilde\gamma$} (m-10-2.200)
   (m-7-3) edge node[auto] {$F$} (m-10-3) ;
   \path[dotted] (m-4-1) edge (m-6-1) ;
  \end{tikzpicture}
 \]
 and let $S$ be $\{1\}$ and $M$ the thin sincere $T$-module with basis $\cB=\{1,\dotsc,2n+1\}$ and whose linear maps are identity matrices. Then it is easily seen that $F:T\to Q$ satisfies Hypothesis (H), and therefore Theorem \ref{thm: push-forward} implies that $\Gr_\ue(F_\ast M)=\decomp C_\beta^{F_\ast M}$ is a decomposition into affine spaces for any dimension vector $\ue$.

 Note that $F_\ast M$ is a preprojective module of the Kronecker quiver $Q$ and all preprojective modules are of this form. Similarly, we find that the preinjective modules are push-forwards of a thin sincere $T$-module $M$ along a morphism
  \[
  \begin{tikzpicture}[description/.style={fill=white,inner sep=2pt}]
  \matrix (m) [matrix of math nodes, row sep=0em, column sep=5em, text height=1.0ex, text depth=0.25ex]
   { 1    \\
      & 2 \\
     3    \\
      & 4 \\ 
         \ \\
      & 2n    \\
     2n+1 & & T \\ 
              \ \\ \ \\
     \bullet & \bullet & Q.\\};
   \path[->,font=\scriptsize]
   (m-1-1) edge node[auto] {$\alpha_1$} (m-2-2)
   (m-3-1) edge node[below] {$\gamma_1$} (m-2-2)
   (m-3-1) edge node[below] {$\alpha_2$} (m-4-2)
   (m-7-1) edge node[auto] {$\gamma_n$} (m-6-2)
   (m-10-1.20) edge node[auto] {$\tilde\alpha$} (m-10-2.160)
   (m-10-1.340) edge node[below] {$\tilde\gamma$} (m-10-2.200)
   (m-7-3) edge node[auto] {$F$} (m-10-3) ;
   \path[dotted] (m-4-2) edge (m-6-2) ;
  \end{tikzpicture}
 \]
 Therefore Theorem \ref{thm: push-forward} implies that $\Gr_\ue(F_\ast M)=\decomp C_\beta^{F_\ast M}$ is a decomposition into affine spaces for any dimension vector $\ue$. Since all indecomposable exceptional representation of the Kronecker quiver $Q$ are either preprojective or preinjective, we see that all indecomposable exceptional $Q$-modules $N$ have an ordered basis such that all Schubert cells $C_\beta(N)$ are affine spaces or empty. This recovers results from \cite{Caldero-Zelevinsky06}.
\end{ex}

\begin{ex}
 Another example that displays a typical situation that satisfies Hypothesis (H) is the following. Let $F:T\to Q$ be the morphism
  \[
  \begin{tikzpicture}[description/.style={fill=white,inner sep=2pt}]
  \matrix (m) [matrix of math nodes, row sep=0em, column sep=3.5em, text height=1.0ex, text depth=0.25ex]
   {    &    &   &   & 4  \\
     11 &    & 2 & 3 & 5  \\
     12 & 8  & S & 1 & 6 & s & p & q \\
     13 & 10 & 9 &   & 7 \\
     14 \\
       &&&& T & Q\\};
   \path[->,font=\scriptsize]
   (m-3-2) edge node[description] {$\sigma$} (m-2-1)
   (m-3-2) edge node[description] {$\tau$}   (m-3-1)
   (m-4-2) edge node[description] {$\sigma$} (m-4-1)
   (m-4-2) edge node[description] {$\tau$}   (m-5-1)
   (m-3-3) edge node[description] {$\gamma$} (m-3-2)
   (m-4-3) edge node[description] {$\alpha$} (m-3-2)
   (m-4-3) edge node[description] {$\gamma$} (m-4-2)
   (m-3-3) edge node[description] {$\alpha$} (m-3-4)
   (m-2-3) edge node[description] {$\alpha$} (m-2-4)
   (m-2-3) edge node[description] {$\gamma$} (m-3-4)
   (m-2-4) edge node[description] {$\sigma$} (m-1-5)
   (m-2-4) edge node[description] {$\tau$}   (m-2-5)
   (m-3-4) edge node[description] {$\sigma$} (m-3-5)
   (m-3-4) edge node[description] {$\tau$}   (m-4-5)
   (m-3-6.20) edge node[auto] {$\tilde\alpha$} (m-3-7.160)
   (m-3-6.340) edge node[below] {$\tilde\gamma$} (m-3-7.200)
   (m-3-7.20) edge node[auto] {$\tilde\sigma$} (m-3-8.160)
   (m-3-7.340) edge node[below] {$\tilde\tau$} (m-3-8.200)
   (m-6-5) edge node[auto] {$F$} (m-6-6);
  \end{tikzpicture}
 \]
 where the subquiver $S$ is mapped to the vertex $s$ of $Q$ and the arrows of $T-S$ are labelled by their image under $F$ (with the tilde removed). Consequently, the map $(T_0-S_0)\to Q_0$ sends $2$ and $9$ to $s$, it sends $1$, $3$, $8$ and $10$ to $p$ and it sends all other vertices to $q$.

 Note that $T$ is a tree extension of $S$. Let $M$ be a $T$-module with ordered basis $\cB$ such that for all $\alpha'\in T_1-S_1$, the matrix $M_{\alpha'}$ is the identity matrix. We assume that the ordering of $\cB$ coincides with the ordering of $T$, with all vertices of $S$ being smaller than $1$. Then it is easily verified that Hypothesis (H) holds in this situation, and we can apply Theorem \ref{thm: push-forward}. Therefore the Schubert cells $C_\beta^{F_\ast M}$ are of the form $C_{\beta_S}^{F_\ast M_S}\times \A^n$ for some $n\geq0$. Since $F_\ast M_S$ is supported by the one point quiver $\{s\}$, the Schubert cell $C_{\beta_S}^{F_\ast M_S}$ is nothing else than the Schubert cell of a usual Grassmannian, and therefore an affine space. 

 Altogether, this shows that for any dimension vector $\ue$ of $Q$, the Schubert decomposition $\Gr_\ue(F_\ast M)=\decomp C_\beta^{F_\ast M}$ is a decomposition into affine spaces.
\end{ex}

\begin{rem}
 Examples \ref{ex: Kronecker quiver again} and \ref{ex: cone as schubert cell} make clear why we have to consider the technical Hypothesis (H) in the proof of Theorem \ref{thm: push-forward}. Though a re-ordering of the vertices yield Schubert decompositions into affine spaces, it is not hard to to construct examples such that triples of type 5 cannot be avoided, and the according Schubert cells are not all affine spaces or empty. One can also construct examples with other fibre constellations that are not allowed in Hypothesis (H) such that the Schubert decomposition contains non-empty cells that are not affine spaces.
 
 From a combinatorial point of view, it is necessary to exclude certain constellations of the fibres of $F:T\to Q$. However Example \ref{ex: exceptional modules of the Kronecker quiver} gives hope that there is a good representation theoretic description of quiver Grassmannians with a Schubert decomposition into affine spaces. In particular one might raise the following question: does any exceptional indecomposable $Q$-module $M$ admit an ordered basis $\cB$ such that for every subset $\beta$ of $\cB$, the Schubert cell $C_\beta^M$ is an affine space or empty?
\end{rem}

\section{Consequences of the push-forward theorem}
\label{section: consequences of the push-forward theorem}

In this section, we will describe a series of consequences of Theorem \ref{thm: push-forward}. Whenever we have a Schubert decomposition of some quiver Grassmannian $\Gr_\ue(M)$ into affine spaces where $M$ is an $S$-module for some quiver $S$, we can use the Theorem \ref{thm: push-forward} to extend this result to a larger class of quiver Grassmannians. We formulate this method in the following statement.

\begin{cor}\label{cor: schubert decomposition of quiver grassmannians}
 Let $T$ be a tree extension of $S$ and $M$ a $T$-module with ordered basis $\cB$ that is ordered above $S$. Let $M_S$ be the restriction of $M$ to $S$ and $\cB_S=\cB\cap M_S$. Let $F:T\to Q$ be a morphism that satisfies Hypothesis (H). Denote by $M_S$ the restriction of $M$ to $S$. If $C_{\beta_S}^{F_\ast M}$ is an affine space or empty for every subset $\beta_S\subset \cB\cap M_S$, then
 \[
  \Gr_\ue (M) \quad = \quad \decomp_{\substack{\beta\subset\cB \text{ of type }\ue}} \ C_\beta^{F_\ast M}
 \]
 is a decomposition into affine spaces for any dimension vector $\ue$.
\end{cor}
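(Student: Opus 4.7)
The plan is to invoke Theorem~\ref{thm: push-forward} on each Schubert cell individually. For every subset $\beta\subset\cB$ of type $\ue$, setting $\beta_S = \beta\cap\cB_S$, the hypothesis that $F:T\to Q$ satisfies~(H) entitles us to apply that theorem, yielding a canonical isomorphism
\[
 C_\beta^{F_\ast M} \;\simeq\; C_{\beta_S}^{F_\ast M_S}\,\times\,\A^{n_\beta}\,\times\,\A^{n_{F,\beta}}
\]
with $n_\beta, n_{F,\beta}\geq 0$. The two affine factors are unconditionally present; any remaining geometric information is carried by the leftmost factor.

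Next I would feed in the standing hypothesis of the corollary, namely that for every $\beta_S\subset\cB_S$ the cell $C_{\beta_S}^{F_\ast M_S}$ is either empty or an affine space $\A^{m_{\beta_S}}$. In the empty case the product above collapses to $\emptyset$, whence $C_\beta^{F_\ast M}=\emptyset$; in the non-empty case we obtain $C_\beta^{F_\ast M}\simeq\A^{m_{\beta_S}+n_\beta+n_{F,\beta}}$. Thus every Schubert cell attached to a subset of $\cB$ of type $\ue$ is either empty or an affine space.

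Since the Schubert cells indexed by subsets $\beta\subset\cB$ of type $\ue$ already form a decomposition of the ambient quiver Grassmannian into locally closed subschemes by the very definition given in Section~\ref{section: schubert cells}, putting the pieces together gives the asserted decomposition into affine spaces. I expect no real obstacle: once Theorem~\ref{thm: push-forward} is in hand the corollary is a direct assembly. Its interest lies rather in applications---any situation in which one already understands the Schubert decomposition of $F_\ast M_S$ (for instance the classical case when $S$ is a single vertex, so that $F_\ast M_S$ is a vector space and $C_{\beta_S}^{F_\ast M_S}$ is a classical Schubert cell in a usual Grassmannian) propagates along any (H)-compatible tree extension to a decomposition into affine spaces for $F_\ast M$.
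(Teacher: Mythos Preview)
Your proposal is correct and follows exactly the paper's approach: the paper's own proof consists of the single sentence ``This follows immediately from Theorem~\ref{thm: push-forward},'' and you have simply spelled out that immediate deduction. Your closing remark about the case where $S$ is a single vertex is also apt, as it anticipates precisely how the corollary is used in the examples that follow.
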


\begin{proof}
 This follows immediately from Theorem \ref{thm: push-forward}.
\end{proof}

\begin{ex}
 Corollary \ref{cor: schubert decomposition of quiver grassmannians} allows us to expand Example \ref{ex: unions of one vertex quivers}. Let $T$ be a tree extension of $S$ and $M$ a $T$-module such that the restriction of $M$ to each connected component of $S$ is one of the quiver representations as considered in Examples \ref{ex: quiver with one vertex}--\ref{ex: union of two projective lines}. Let $\cB$ be an ordered basis of $M$ that induces an ordering of $T$ and that is ordered above $S$. Let $F:T\to Q$ a morphism that satisfies Hypothesis (H) and such that $F\vert_S:S\to Q$ is injective on vertices and arrows. Then 
 \[
  \Gr_\ue (M) \quad = \quad \decomp_{\substack{\beta\subset\cB \text{ of type }\ue}} \ C_\beta^{F_\ast M}
 \]
 is a decomposition into affine spaces for any dimension vector $\ue$.
\end{ex}

\subsection{Direct sums of modules}
\label{subsection: direct sums}

\begin{thm}\label{thm: direct sum}
 Let $T$ be a tree extension of $S$ and let $M^{(i)}$ be $T$-modules for $i=1,2$. Assume that $M_\alpha^{(i)}$ is an isomorphism for all arrows $\alpha$ of $T-S$ and for $i=1,2$. Define $M=M^{(1)}\oplus M^{(2)}$, and let $M_S$ and $M^{(i)}_S$ be the respective restrictions to $S$. Let $\cB_S$ be an ordered basis of $M_S$ such that $\cB_S=\cB_S^{(1)}\cup \cB_S^{(2)}$ where $\cB^{(i)}_S=\cB_S\cap\cM_i$ for $i=1,2$. Assume that $\cB_S$ induces an ordering of $S$. Let $\beta_S\subset\cB_S$ be a subset such that there is an $n_S\geq0$ with 
 \[
  C^{M_S}_{\beta_S} \quad \simeq \quad C_{\beta_S^{(1)}}^{M_S^{(1)}}\times C_{\beta_S^{(2)}}^{M_S^{(2)}}\times \A^{n_S}.
 \]  
 Then there is an extension $\cB$ of $\cB_S$ to $M$ that is ordered above $S$ such that for every subset $\beta\subset\cB$ with $\beta_S=\beta\cap\cB_S$, there is some $n\geq n_S$ such that 
 \[
  C^{M}_{\beta} \quad \simeq \quad C_{\beta^{(1)}}^{M^{(1)}}\times C_{\beta^{(2)}}^{M^{(2)}}\times \A^{n}
 \]
 if $C^M_\beta$ is not empty.
\end{thm}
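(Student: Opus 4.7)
The plan is to build $\cB$ by combining separately extended bases for $M^{(1)}$ and $M^{(2)}$, and then compare the Schubert cells via Theorem \ref{thm: tree extensions} applied to all three modules. First I would apply Lemma \ref{lemma: basis extensions for tree extensions} to each $M^{(i)}$ to obtain an ordered basis $\cB^{(i)}$ of $M^{(i)}$ that extends $\cB_S^{(i)}$ and is ordered above $S$. I then define $\cB=\cB^{(1)}\cup\cB^{(2)}$ by declaring $\cB^{(1)}_p<\cB^{(2)}_p$ at each vertex $p\in T_0-S_0$ and preserving the ordering of $\cB_S$ below. Since $M_\alpha=M^{(1)}_\alpha\oplus M^{(2)}_\alpha$ and each summand is the identity on $\cB^{(i)}$ for $\alpha\in(T-S)_1$, the matrix of $M_\alpha$ in the combined ordered basis is again the identity. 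A routine verification on paths then confirms that $\cB$ is ordered above $S$.

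Next, assuming $C_\beta^M$ is non-empty, Theorem \ref{thm: tree extensions} applied to $M$, $M^{(1)}$ and $M^{(2)}$ with subsets $\beta$, $\beta^{(1)}=\beta\cap M^{(1)}$ and $\beta^{(2)}=\beta\cap M^{(2)}$ yields isomorphisms
\[
 C_\beta^M \ \simeq \ C_{\beta_S}^{M_S}\times\A^{n_\beta},\qquad C_{\beta^{(i)}}^{M^{(i)}} \ \simeq \ C_{\beta_S^{(i)}}^{M_S^{(i)}}\times\A^{n_{\beta^{(i)}}}
\]
for suitable non-negative integers. Combined with the hypothesis on $C_{\beta_S}^{M_S}$, this produces an isomorphism
\[
 C_\beta^M \ \simeq \ C_{\beta^{(1)}}^{M^{(1)}}\times C_{\beta^{(2)}}^{M^{(2)}}\times\A^{n}\qquad\text{with}\qquad n \ = \ n_S+n_\beta-n_{\beta^{(1)}}-n_{\beta^{(2)}}.
\]

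To conclude, I need to establish $n\geq n_S$, equivalently $n_\beta\geq n_{\beta^{(1)}}+n_{\beta^{(2)}}$. I would prove this by induction on $\#(T_0-S_0)$ along the same stripping procedure that proves Theorem \ref{thm: tree extensions}. At a step that removes an end $q$ along an arrow $\alpha\colon p\to q$ (Case I in that proof), the increment to $n_\beta$ is $\sum_{b\in\beta_q-M_\alpha(\beta_p)}\#\{b'\in\cB_q\mid b'<b,\,b'\notin\beta_q\}$. Partitioning the outer sum according to whether $b\in\cB^{(1)}_q$ or $b\in\cB^{(2)}_q$ and exploiting the ordering $\cB^{(1)}_q<\cB^{(2)}_q$, this increment decomposes as the analogous increments for $M^{(1)}$ and $M^{(2)}$ plus the non-negative cross-term
\[
 \#\bigl(\beta^{(2)}_q-M_\alpha(\beta^{(2)}_p)\bigr) \ \cdot \ \#\bigl(\cB^{(1)}_q-\beta^{(1)}_q\bigr).
\]
The parallel analysis handles Case II, where the contribution is $\sum_{b\in\beta_p}\#\{b'\in\beta_q\mid b'<M_\alpha(b),\,b'\notin M_\alpha(\beta_p)\}$ and the cross-term takes the form $\#\beta^{(2)}_p\cdot\#(\beta^{(1)}_q-M_\alpha(\beta^{(1)}_p))$. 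Summing the non-negative cross-terms over the induction gives the desired inequality.

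The main obstacle is the combinatorial bookkeeping: one must check that the isomorphisms produced at each stripping step assemble, in the presence of the direct-sum decomposition, into a single isomorphism of the product form asserted in the theorem, and that the affine factors $\A^{n_{\beta^{(i)}}}$ appearing in the two factorizations can indeed be extracted as a literal product inside $C_\beta^M$ rather than merely matched in dimension. The explicit cross-term computation above is the crux of this check, and it is what forces the inequality $n\geq n_S$ with equality precisely when there are no non-pivot basis elements of one summand lying below new pivots of the other.
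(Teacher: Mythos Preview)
Your approach is correct but takes a genuinely different route from the paper. The paper realizes $M=M^{(1)}\oplus M^{(2)}$ as the push-forward $F_\ast M'$ of $M'=M^{(1)}\amalg M^{(2)}$ along the fold map $F:T\amalg T\to T$, verifies that $F$ satisfies Hypothesis~(H) (only triples of types 1 and 2 occur), and then invokes Theorem~\ref{thm: push-forward} together with Lemma~\ref{lemma: disjoint union} and Theorem~\ref{thm: tree extensions}. This yields $n=n_S+n_{F,\beta}$ directly, so the inequality $n\geq n_S$ comes for free. By contrast, you bypass the push-forward machinery entirely and rely only on Theorem~\ref{thm: tree extensions} applied separately to $M$, $M^{(1)}$ and $M^{(2)}$; the price is the explicit combinatorial verification that $n_\beta\geq n_{\beta^{(1)}}+n_{\beta^{(2)}}$ via the cross-terms, which you carry out correctly in both Cases~I and~II. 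Your argument is more elementary and self-contained, while the paper's argument illustrates that Theorem~\ref{thm: direct sum} is really a special instance of the push-forward theorem. One small point worth making precise in your write-up: when merging $\cB^{(1)}$ and $\cB^{(2)}$, you should first arrange that both induce the \emph{same} vertex ordering on $T_0-S_0$ (which is possible by the construction in Lemma~\ref{lemma: basis extensions for tree extensions}), so that the interleaving $\cB_p^{(1)}<\cB_p^{(2)}$ at each $p\in T_0-S_0$ extends consistently to a total order on $\cB$ that still induces an ordering of $T$.
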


\begin{proof}
 Define $T'=T\amalg T$ and $S'=S\amalg S\subset T'$. Then $T'$ is a tree extension of $S'$. Let $\iota_1:T\to T'$ and $\iota_2:T\to T'$ be the inclusions into the first resp.\ the second summand of $T'=T\amalg T$. Define the $T'$-module $M'=M^{(1)}\amalg M^{(2)}$ whose restriction to $\iota_1(T)$ is $M^{(1)}$ and whose restriction to $\iota_2(T)$ is $M^{(2)}$. By Lemma \ref{lemma: basis extensions for tree extensions}, we can extend $\cB_S$ to an ordered basis $\cB$ of $M'$ that is ordered above $S'$ and that satisfies $M_\alpha(\cB_p)\subset\cB_q$ for every arrow $\alpha:p\to q$ in $T'-S'$.

 Define $F:T'\to T$ by $F\vert_{\iota_1(T)}=\id_T=F\vert_{\iota_2(T)}$. It is easily verified that $F$ satisfies Hypothesis (H) (indeed only triples of types 1 and 2 occur for $Q$). By the very definition of push-forwards, $F_\ast M'=M$ and $F_\ast M_{S'}=M_S$. Further, we have $C_\beta^{M'}=C_{\beta^{(1)}}^{M^{(1)}}\times C_{\beta^{(2)}}^{M^{(2)}}$ and $C_{\beta_{S'}}^{M'_{S'}}=C_{\beta^{(1)}_S}^{M^{(1)}_S}\times C_{\beta_S^{(2)}}^{M_S^{(2)}}$ (by Lemma \ref{lemma: disjoint union}). This allows us to apply Theorems \ref{thm: tree extensions} and \ref{thm: push-forward} and the hypothesis of this theorem (to which we refer to by (h)) to conclude
 \begin{align*}
  C_\beta^M \ = \ C_\beta^{F_\ast M'} \ \underset{\ref{thm: push-forward}}= & \ C_{\beta_S}^{F_\ast M_S'} \ \times \ \A^{n_\beta} \ \times \ \A^{n_{F,\beta}} 
                                                                        & = & \ C_{\beta_S}^{M_S}         \ \times \ \A^{n_\beta} \ \times \ \A^{n_{F,\beta}} \\
                                                     \underset{\text{(h)}}= & \ C_{\beta^{(1)}_S}^{M^{(1)}_S} \ \times \ C_{\beta_S^{(2)}}^{M_S^{(2)}} \ \times \ \A^{n_S} \ \times \ \A^{n_\beta}\ \times\ \A^{n_{F,\beta}} \hspace{-8pt}      & = & \ C_{\beta_{S'}}^{M'_{S'}} \ \times \ \A^{n_S} \ \times \ \A^{n_\beta} \ \times \ \A^{n_{F,\beta}} \\
                                     \underset{\ref{thm: tree extensions}}= & \ C_{\beta}^{M'} \ \times \ \A^{n_S} \ \times  \ \A^{n_{F,\beta}} 
                                                                        & = & \ C_{\beta^{(1)}}^{M^{(1)}}\times C_{\beta^{(1)}}^{M^{(1)}} \ \times \ \A^{n_S}  \ \times \ \A^{n_{F,\beta}};
 \end{align*}
 thus the claim of the theorem is satisfied for $n=n_S+n_{F,\beta}$.
\end{proof}

\subsection{Representations of forests}
\label{subsection: monomial representations of forests}

Let $Q$ be a quiver and $M$ a $Q$-module over $k$. The \emph{support of $M$} is the subquiver $Q_M$ of $Q$ with vertices $Q_{M,0}=\{p\in Q|M_p\neq 0\}$ and edges $Q_{M,1}=\{\alpha\in Q_1|M_\alpha\neq0\}$. A \emph{forest} is a quiver $Q$ that is a union of trees. 

\begin{thm}\label{thm: monomial representations of forests}
 Let $Q$ be a forest, $M$ a $Q$-module and $\ue$ a dimension vector for $Q$. Assume that there is an ordered basis $\cB$ such that for all arrows $\alpha$ of $Q$, the matrix $M_\alpha$ is of the block form $\tinymat 0100$ where $1$ is a square identity matrix and the other three blocks are (possibly non-square) zero matrices. Then
 \[
  \Gr_\ue(M)  \quad = \quad \decomp_{\substack{\beta\subset\cB\text{ of type }\ue}} \ C_\beta^M
 \]
 is a decomposition into affine spaces.
\end{thm}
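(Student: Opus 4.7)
The plan is to prove the theorem by induction on the number of arrows of $Q$, peeling off a leaf vertex at each step. By Lemma~\ref{lemma: disjoint union} I first reduce to the case that $Q$ is connected, so $Q$ is a tree. If $Q$ has no arrows, it consists of a single vertex and $\Gr_\ue(M)$ is a usual Grassmannian, whose classical Schubert decomposition into affine cells is Example~\ref{ex: quiver with one vertex}. Otherwise I pick a leaf vertex $\ell$ of $Q$ with its unique adjacent arrow $\alpha$, set $Q' = Q \setminus \{\ell, \alpha\}$, and let $M'$ be the restriction of $M$ to $Q'$; since the block form hypothesis is inherited by $M'$ under the induced ordering on $\cB' = \cB - \cB_\ell$, the induction hypothesis yields that every Schubert cell $C_{\beta'}^{M'}$ of $\Gr_{\ue'}(M')$ is an affine space or empty.

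I then analyze the forgetful morphism $\pi : \Gr_\ue(M) \to \Gr_{\ue'}(M')$ that drops the $\ell$-component. For each subset $\beta \subset \cB$ of type $\ue$, writing $\beta = \beta' \sqcup \beta_\ell$ with $\beta_\ell = \beta \cap \cB_\ell$, the restriction $\pi : C_\beta^M \to C_{\beta'}^{M'}$ has fiber over a point $V' \in C_{\beta'}^{M'}$ given by the $e_\ell$-dimensional subspaces $V_\ell \subset M_\ell$ with pivot set $\beta_\ell$ subject to the compatibility condition $M_\alpha(V_p') \subset V_\ell$ in the case $\alpha : p \to \ell$ (the case $\alpha : \ell \to p$ is entirely analogous, with the condition becoming $M_\alpha(V_\ell) \subset V_p'$). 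I aim to show that this fiber is either empty or an affine space whose dimension depends only on $\beta'$ and $\beta_\ell$, and not on the particular $V'$; this will exhibit $C_\beta^M$ as a trivial affine bundle over $C_{\beta'}^{M'}$ and hence, by the induction hypothesis, as an affine space.

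The hard part is the explicit fiber computation, where the block form and the chosen ordering interact. The block-form hypothesis decomposes $\cB_p$ into a kernel part (the first columns of $M_\alpha$) and an identity part (the remaining columns, which $M_\alpha$ maps bijectively and order-preservingly onto the first rows of $\cB_\ell$ via a partial injection I denote $\phi_\alpha$). For a vector $v_b = b + \sum_{b' < b,\, b' \notin \beta_p'} v_{b',b}\, b'$ in the distinguished basis of $V_p'$ (as in Section~\ref{subsection: K-rational points}), a direct check shows that $M_\alpha(v_b) = 0$ if $b$ lies in the kernel part, and otherwise $M_\alpha(v_b)$ has pivot $\phi_\alpha(b)$ with all subleading terms supported on strictly smaller basis elements of $\cB_\ell$. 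Consequently the pivot set of $M_\alpha(V_p')$ is fixed (equal to the image under $\phi_\alpha$ of the identity-part pivots of $\beta_p'$), and the inclusion $V_\ell \supset M_\alpha(V_p')$ is either empty (if this pivot set fails to lie in $\beta_\ell$) or reduces to a triangular linear system in the free parameters $v_{b''',b''}^{(\ell)}$ parametrizing the usual Schubert cell $C_{\beta_\ell} \subset \Gr(e_\ell, m_\ell)$: the leading variables $v_{b''',\phi_\alpha(b)}^{(\ell)}$ are affinely determined by the strictly smaller-index variables and by the coefficients of $V'$, so the solution set is an affine space whose dimension is a combinatorial function of $\beta'$ and $\beta_\ell$ alone. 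This gives the desired trivial affine-bundle structure of $\pi$ on Schubert cells and completes the induction.
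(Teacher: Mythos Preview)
Your argument is correct, and it takes a genuinely different route from the paper's proof. The paper does not induct on the tree $Q$ at all; instead it decomposes $M$ into a direct sum of thin indecomposable modules $M_i$ (using the block form), sets $T=\coprod_i Q_i$ where $Q_i$ is the support of $M_i$, and realizes $M$ as the push-forward $F_\ast N$ of the thin sincere $T$-module $N$ along the tautological map $F:T\to Q$. After a small trick of adjoining an extra leaf to each $Q_i$ so that $S$ can be chosen with $e_p=0$ for every $p\in S$, the block form forces all relevant triples to be of type $0$, $1$ or $2$, so Hypothesis~(H) holds and Theorem~\ref{thm: push-forward} gives $C_\beta^M\simeq C_{\beta_S}^{F_\ast M_S}\times\A^{n_\beta}$ with $C_{\beta_S}^{F_\ast M_S}$ a point. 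Your approach, by contrast, is an elementary leaf-peeling induction that directly extends Cases~I and~II of the proof of Theorem~\ref{thm: tree extensions} to the block-form (rather than identity) situation: the key new observation is that the kernel part of $\cB_p$ precedes the identity part, so pivots are preserved under $\phi_\alpha$ and the fibre equations remain triangular. This is more self-contained and avoids the heavy machinery of relevant triples and Hypothesis~(H); the paper's route, on the other hand, exhibits Theorem~\ref{thm: monomial representations of forests} as a corollary of the main push-forward theorem and thereby illustrates its scope. One small point: your ``entirely analogous'' case $\alpha:\ell\to p$ is a little less immediate than the case you spell out, since the coefficients $\lambda_b$ in the expansion $M_\alpha(w_c)=\sum_b \lambda_b v_b$ now depend on the unknowns $w_{c',c}$ rather than only on $V'$; it still works because the resulting linear system in the $w_{c',c}$ remains triangular, but it deserves a line of justification.
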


\begin{proof}
 Since all matrices are of the form $\tinymat 0100$ and $Q$ is a forest, it follows that $M$ decomposes into a direct sum $M\simeq\bigoplus_{i=1}^r M_i$ of thin and indecomposable $Q$-modules $M_i$. Let $Q_i$ be the support of $M_i$ for $i=1,\dotsc, r$ and $\iota_i:Q_i\to Q$ the inclusion. Then $Q_i$ is a tree and the restriction $M_{i,Q_i}$ of $M_i$ to $Q_i$ is a thin sincere $Q_i$-module with $M_\alpha(b_p)=b_q$ for every arrow $\alpha: p\to q$ of $Q_i$ and $\cB_p=\{b_p\}$ and $\cB_q=\{b_q\}$.

 We can further assume that every $Q_i$ has a vertex $p$ that connects to only one arrow $\alpha: p\to q$ and such that $e_{\iota_i(p)}=0$. If this is not the case, we add an arrow $\alpha: p\to q$ to $Q_i$ where $q$ is an arbitrary vertex of $Q_i$, which defines a tree $Q_i'$. We extend $Q$ to a forest $Q'$ that contains an arrow $\alpha': p'\to \iota(q)$, which allows us to extend $\iota_i$ to an inclusion $\iota':Q_i'\to Q'$ that maps $\alpha$ to $\alpha'$. We extend $M_{i,Q_i}$ to the $Q_i'$-module $M'_{i,Q'_i}$with basis $\cB'=\cB\cup\{b_p\}$ by $M'_{i,p}=k$ and $M'_{i,\alpha}(b_p)=b_q$ where $\cB_q=\{b_q\}$. Extend $M_i$ to the $Q'$-module $M'_i$ whose restriction to $Q_i'$ is $M'_{i,Q'_i}$. Extend all other direct summands $M_j$ of $M$ to the $Q'$-module $M'_j$ with $M'_{j,\alpha'}: 0\to M'_{j,\iota_i(q)}$ and define $M'=\bigoplus_{j=1}^r M'_j$. Define the dimension vector $\ue'$ for $Q'$ by $e'_p=0$ and $e'_{p'}=e_{p'}$ for $p'\in Q_0$. Then $Q'$, $M'$ and $\ue'$ satisfy the hypothesis of the theorem 
and $\Gr_{\ue'}(M')$ is the same as $\Gr_\ue(M)$. Therefore we can assume the existence of the vertex $p$ in $Q_i$.

 Define $T=\coprod_{i=1}^r Q_i$, which is a forest, and $S=\{p_1,\dotsc,p_r\}$ where $p_i$ is a vertex of $Q_i$ with the properties from the last paragraph. Define $N=\coprod_{i=1}^r M_{i,Q_i}$. Let $F:T\to Q$ be the morphism of quivers that restricts to $\iota_i: Q_i\to Q$ for each connected component $Q_i$ of $T$. Then $F$ is strictly ordered. Since the matrices $M_\alpha$ are of the form $\tinymat 0100$, all relevant triples $(\alpha,t,s)$ are of type 0, 1 or 2.

 Since the Schubert decomposition of $\Gr_\ue(M)$ only depends on the ordering of all the sets $\cB_p$ for $p\in Q$, but not on the ordering of $b\in\cB_p$ and $b'\in\cB_q$ if $p\neq q$, we can reorder $\cB$ such that we preserve the orderings of all subsets $\cB_p$ for $p\in Q$, but such that $\cB$ is ordered above $S$ (this is basically explained in Lemma \ref{lemma: basis extensions for tree extensions}). With respect to this new ordering, the morphism $F:T\to Q$ satisfies Hypothesis (H).

 Therefore we can apply Theorem \ref{thm: push-forward} to obtain for every subset $\beta$ of $\cB$ and $\beta_S=\beta\cap M_S$ that $C_\beta^M\simeq C_{\beta_S}^{M_S}\times \A^{n_\beta}$ for some $n_\beta\geq 0$, provided $C_\beta^M$ is not empty. Since $e_p=0$ for all $p\in S$, the set $\beta_S$ is empty if $\beta$ is of type $\ue$, which means that $C_{\beta_S}^{M_S}=\Spec k$ is a point. Therefore  $C_\beta^M$ is an affine space for every $\beta\subset\cB$ of type $\ue$. This completes the proof of the theorem.
\end{proof}

\begin{ex}[Degenerate flag varieties]\label{ex: degenerate_flag_varieties1}
 As a particular class of representation of trees whose matrices are of the form $\tinymat 0100$, we re-obtain the result \cite[Thm.\ 7.11]{Cerulli-Feigin-Reineke11}, which says that degenerate flag varieties have a decomposition into affine spaces. Indeed, the results of \cite{Cerulli-Feigin-Reineke11} are stronger since the decomposition is given by a group action. We inspect degenerate flag varieties in more detail in Example \ref{ex: degenerate_flag_varieties2}
\end{ex}

\section{The cohomology of quiver Grassmannians}
\label{section: cohomology of quiver grassmannians}

A Schubert decomposition of a quiver Grassmannian into affine spaces yields certain information about the cohomology of the quiver Grassmannian. We concentrate on the singular cohomology of a complex quiver Grassmannian, i.e.\ the case $k=\C$. Similar arguments can be used to treat the $l$-adic cohomology with proper support of quiver Grassmannians over the integers.
 
The basic fact that we will use is the following, cf.\ Lemma 6 in Appendix B of \cite{Fulton97}. Let $X$ be a smooth projective $k$-scheme of complex dimension $m$ that has a decomposition $X=\decomp_{i\in I}Z_i$ into affine spaces $Z_i$ such that there is a series of closed subschemes $X^{(0)}\subset X^{(1)}\subset \dotsb \subset X^{(n)}=X$ of $X$ such that all $l\in\{0,\dotsc,n\}$, there is a subset $I_l\subset I$ such that $X^{(l)}-X^{(l-1)}=\coprod_{l\in I_l} Z_i$ (as a disjoint union of schemes). Then the cohomology classes of the closures $\overline{Z_i}$ form a $\Z$-basis of the cohomology ring $H^\ast(X,\Z)$. If $Z_i\simeq\A^{d_i}$, then the class $[\overline{Z_i}]$ is an element of $H^{2m-2d_i}(X,\Z)$. In particular, the odd cohomology of $X$ vanishes. 

\begin{lemma}
 Let $Q$ be a quiver and $M$ a $Q$-module with ordered basis $\cB$. Let $\ue$ be a dimension vector for $Q$. Then there are subsets $I_0\subset\dotsb\subset I_n=I$ of $I=\{\beta\subset\cB|\beta\text{ is of type }\ue\}$ such that $X^{(l)}=\bigcup_{\beta\in I_l} C_\beta^M$ is a closed subscheme of $\Gr_\ue(M)$ for all $l=0,\dotsc,n$ and such that $X^{(l)}-X^{(l-1)}$ is isomorphic to the disjoint union $\coprod_{\beta\in I_l-I_{l-1}} C_\beta^M$ for all $l=1,\dotsc,n$.
\end{lemma}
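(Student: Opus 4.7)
The plan is to exploit the well-known closure relations for Schubert cells of the usual Grassmannian (the Bruhat order) and pull them back along the closed embedding $\Gr_\ue(M)\hookrightarrow\Gr_\ue(\um)=\prod_p\Gr(e_p,m_p)$. Concretely, for each $p\in Q_0$ and each $\beta_p\subset\cB_p$ of size $e_p$, the Schubert cell $C_{\beta_p}(m_p)\subset\Gr(e_p,m_p)$ has closure
\[
 \overline{C_{\beta_p}(m_p)} \ = \ \bigcup_{\gamma_p\preceq\beta_p} \ C_{\gamma_p}(m_p).
\]
Taking products yields $\overline{C_\beta(\um)}=\bigcup_{\beta'\preceq\beta}C_{\beta'}(\um)$ in $\Gr_\ue(\um)$, and intersecting with $\Gr_\ue(M)$ gives
\[
 \overline{C_\beta^M} \quad\subset\quad \bigcup_{\beta'\preceq\beta}\ C_{\beta'}^M
\]
as closed subsets of $\Gr_\ue(M)$.

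Next I would choose any linear extension $\beta^{(1)},\dotsc,\beta^{(n)}$ of the partial order $\preceq$ on the finite set $I$, i.e.\ an enumeration with the property that $\beta^{(i)}\preceq\beta^{(j)}$ implies $i\leq j$ (such an enumeration exists because $\preceq$ is a partial order on a finite set). Set $I_l=\{\beta^{(1)},\dotsc,\beta^{(l)}\}$ for $l=0,\dotsc,n$ and define $X^{(l)}=\bigcup_{\beta\in I_l}C_\beta^M$, equipped with its reduced induced scheme structure in $\Gr_\ue(M)$.

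To see that $X^{(l)}$ is closed, I would observe that for every $\beta=\beta^{(j)}\in I_l$, the inclusion above gives $\overline{C_\beta^M}\subset\bigcup_{i\leq j}C_{\beta^{(i)}}^M\subset X^{(l)}$, since the enumeration is compatible with $\preceq$. Hence $X^{(l)}=\bigcup_{\beta\in I_l}\overline{C_\beta^M}$ is a finite union of closed subsets, therefore closed. For the complement, the Schubert decomposition of $\Gr_\ue(M)$ expresses it as a disjoint union (on underlying sets), so the set-theoretic difference is
\[
 X^{(l)}\,-\,X^{(l-1)} \quad = \quad \coprod_{\beta\in I_l-I_{l-1}}\ C_\beta^M,
\]
and this decomposition identifies $X^{(l)}-X^{(l-1)}$ with the disjoint union of the (locally closed) Schubert cells inside $\Gr_\ue(M)$.

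There is no serious obstacle: the only point deserving attention is that $C_\beta^M$ may fail to be reduced, but this does not affect the argument, because the closure relations and the decomposition statement are statements about underlying topological spaces and locally closed subsets, and the filtration $X^{(l)}$ is constructed as a union of closed subspaces. Empty Schubert cells cause no problem either, as they may freely be inserted anywhere in the enumeration.
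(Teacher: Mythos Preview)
Your proof is correct and uses the same essential idea as the paper: the closure relation $\overline{C_\beta^M}\subset\bigcup_{\beta'\preceq\beta}C_{\beta'}^M$, pulled back from the Bruhat order on the product Grassmannian, guarantees that any filtration compatible with $\preceq$ yields closed strata. The only cosmetic difference is that the paper builds $I_l$ inductively (adding at each step all $\beta$ whose boundary $\overline{C_\beta^M}-C_\beta^M$ already lies in $X^{(l-1)}$, and arguing that minimal elements of $I-I_{l-1}$ always qualify), whereas you take a linear extension of $\preceq$ and add one cell at a time; these are equivalent constructions of a filtration refining the same partial order.
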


\begin{proof}
 Define $I_0$ as the set of all subsets $\beta$ of $\cB$ of type $\ue$ such that $C_\beta^M$ is a closed subscheme of $\Gr_\ue(M)$. Since there are only finitely many subsets $\beta$ of $\cB$, $X^{(0)}=\coprod_{\beta\in I_0}C_\beta^M$ is a closed subscheme of $\Gr_\ue(M)$.

 If $I_{l-1}$ is defined for $l>0$, then we define $I_l$ as the set of all subsets $\beta$ of $\cB$ of type $\ue$ such that the complement $\overline{C_\beta^M}-C_\beta^M$ of $C_\beta^M$ in its own closure is contained in $X^{(l-1)}$. Then $X^{(l)}=\bigcup_{\beta\in I_l} C_\beta^M$ is a closed subscheme of $\Gr_\ue(M)$ and $X^{(l)}-X^{(l-1)}$ is isomorphic to the disjoint union $\coprod_{\beta\in I_l-I_{l-1}} C_\beta^M$.

 The proof is finished once we have shown that there is an $n$ such that $I_n=I$ and therefore $X^{(n)}=\Gr_\ue(M)$. Since the Schubert cells of $C_\beta^M$ are defined as the pull-back of the Schubert cells of the product Grassmannian $\prod_{i\in Q_0} \Gr(e_i,\dim M_i)$, the intersection $C_\beta^M\cap \overline{C_{\beta'}^M}$ of a Schubert cell with the closure of another Schubert cell is non-trivial only if $\beta\preceq\beta'$  (cf.\ Section \ref{section: schubert cells} for the definition of $\beta\preceq\beta'$).
 
 This implies for $l>0$ that if $\beta$ is a minimal element of the partial ordered set $I-I_{(l)}$ that $\overline{C^M_\beta}\subset C_\beta^M\cup X^{(l-1)}$. This means that $\beta\in I_l$ by the definition of $I_l$. Therefore the sequence $I_0\subset \dotsc\subset I_{l-1}\subset I_l \subset\dotsc $ is properly growing as long as there are (minimal) elements in $I-I_l$. Since $I$ is finite, there is an $n$ such that $I_n=I$.
\end{proof}

\begin{cor} \label{cor: cohomology for smooth quiver grassmannians}
  Let $Q$ be a quiver and $M$ a $Q$-module with ordered basis $\cB$. Let $\ue$ be a dimension vector for $Q$. If $\Gr_\ue(M)$ is smooth and $\Gr_\ue(M)=\decomp C_\beta^M$ is a decomposition into affine spaces, then the cohomology classes of the closures $\overline{C_\beta^M}$ form a $\Z$-basis of the cohomology ring $H^\ast(\Gr_\ue(M),\Z)$. If $d_\beta=\dim C_\beta^M$, then the class $[\overline{C_\beta^M}]$ is an element of $H^{2m-2d_\beta}(X,\Z)$. In particular, the odd cohomology of $X$ vanishes. \qed
\end{cor}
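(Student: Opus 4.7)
The plan is to reduce the statement to the general fact about cell decompositions of smooth projective varieties cited at the beginning of the section (cf.\ \cite[Appendix B, Lemma 6]{Fulton97}) by producing the required filtration from the preceding Lemma. Concretely, I would proceed as follows.

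First, I would verify that all hypotheses of the cited fact are met. The quiver Grassmannian $\Gr_\ue(M)$ is a closed subscheme of the product of usual Grassmannians $\prod_{p\in Q_0}\Gr(e_p,m_p)$, hence projective; by assumption it is smooth, so its complex analytification is a smooth projective complex variety of some pure complex dimension $m$. The assumption that $\Gr_\ue(M)=\decomp_\beta C_\beta^M$ is a decomposition into affine spaces provides the required locally closed cells, and the previous Lemma furnishes the filtration $X^{(0)}\subset X^{(1)}\subset\dotsb\subset X^{(n)}=\Gr_\ue(M)$ by closed subschemes with $X^{(l)}-X^{(l-1)}\simeq\coprod_{\beta\in I_l-I_{l-1}} C_\beta^M$. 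Thus the hypotheses of the cited fact are satisfied.

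Next, I would invoke the cited fact to conclude that the fundamental classes $[\overline{C_\beta^M}]$ form a $\Z$-basis of $H^\ast(\Gr_\ue(M),\Z)$, and that the odd cohomology vanishes. The degree computation is then a routine consequence of Poincar\'e duality: since $C_\beta^M\simeq\A^{d_\beta}$ is irreducible of complex dimension $d_\beta$, its closure $\overline{C_\beta^M}$ is an irreducible subvariety of complex dimension $d_\beta$ inside the smooth projective variety $\Gr_\ue(M)$ of complex dimension $m$, and so its fundamental class lies in $H_{2d_\beta}(\Gr_\ue(M),\Z)\simeq H^{2m-2d_\beta}(\Gr_\ue(M),\Z)$.

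I do not expect any serious obstacle, since essentially everything has been assembled before: the Lemma supplies the filtration, and smoothness together with projectivity allows the classical Bruhat-type argument to be applied verbatim. The only minor point to keep in mind is that the Schubert cells $C_\beta^M$ may a priori be non-reduced; but by assumption they are isomorphic to affine spaces, so this subtlety does not occur here, and the closures in the complex topology agree with the closures as subschemes.
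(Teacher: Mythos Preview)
Your proposal is correct and follows exactly the approach intended by the paper: the corollary is marked with a \qed\ because it is an immediate consequence of the preceding Lemma (which supplies the filtration) together with the cited fact from \cite[Appendix~B, Lemma~6]{Fulton97}. Your write-up simply makes explicit what the paper leaves implicit, including the Poincar\'e-duality degree check.
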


The Euler characteristic of a complex scheme is additive in decompositions into locally closed subschemes. This means that the Schubert decomposition $\Gr_\ue(M)=\decomp C_\beta^M$ yields the formula
\[
 \chi\bigl( \Gr_\ue(m)\bigr) \quad = \quad \sum_{\beta\subset\cB\text { of type }\ue} \ \chi\bigl( C_\beta^M \bigr).
\]
Note further that $\chi(C_\beta^M)=\chi(C_\beta^{M,\red})$ and that the Euler characteristic of an affine space is $1$. Therefore we have the following result without any assumptions on the smoothness of $\Gr_\ue(M)$.

\begin{prop}
 \label{prop: euler characteristic from dias}
 Let $Q$ be a quiver, $M$ a $Q$-module with ordered basis $\cB$ and $\ue$ a dimension vector for $Q$. Assume that the reduced Schubert decomposition $\Gr_\ue(M)=\decomp C_\beta^{M,\red}$ is a decomposition into affine spaces. Then the Euler characteristic of $\Gr_\ue(M)$ equals the number of non-empty Schubert cells $C_\beta^{M}$ where $\beta$ is of type $\ue$.
\end{prop}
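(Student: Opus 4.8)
The plan is to reduce the proposition to the two facts recalled in the paragraph immediately preceding it: additivity of the Euler characteristic over decompositions into locally closed subschemes, and $\chi(\A^d)=1$. Since we are in the setting $k=\C$, I would first recall that for a complex scheme of finite type the topological Euler characteristic $\chi$ (equivalently, the Euler characteristic with compact supports) satisfies $\chi(Y)=\chi(Y')+\chi(Y-Y')$ for every closed subscheme $Y'\subset Y$, and hence is additive over any finite decomposition into locally closed subschemes. By construction in Section~\ref{section: schubert cells}, the Schubert decomposition
\[
 \varphi\colon\quad\coprod_{\beta\subset\cB\text{ of type }\ue}C_\beta^M\quad\longrightarrow\quad\Gr_\ue(M)
\]
is precisely such a decomposition: the restriction of $\varphi$ to each cell $C_\beta^M$ is a locally closed embedding, and $\varphi$ induces a bijection on $\C$-rational points. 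Therefore additivity gives $\chi\bigl(\Gr_\ue(M)\bigr)=\sum_{\beta\subset\cB\text{ of type }\ue}\chi\bigl(C_\beta^M\bigr)$.

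The next step is to evaluate each summand. Since the Euler characteristic depends only on the underlying topological space, $\chi(C_\beta^M)=\chi(C_\beta^{M,\red})$; here one uses only that $C_\beta^M$ and its reduction share the same space. By hypothesis, each reduced Schubert cell $C_\beta^{M,\red}$ is either empty or isomorphic to an affine space $\A^{d_\beta}$ over $\C$. Using $\chi(\emptyset)=0$ and $\chi(\A^{d_\beta})=\chi(\C^{d_\beta})=1$, the term $\chi(C_\beta^M)$ equals $1$ exactly when $C_\beta^M$ is non-empty and $0$ otherwise. Summing over all $\beta$ of type $\ue$ then yields that $\chi\bigl(\Gr_\ue(M)\bigr)$ is the number of non-empty Schubert cells $C_\beta^M$ with $\beta$ of type $\ue$, which is the assertion.

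I do not expect a genuine obstacle here; the proposition is essentially a bookkeeping consequence of the displayed formula. The only two points deserving a sentence of justification are that $\varphi$ really is a stratification to which additivity of $\chi$ applies — this is guaranteed by the defining properties of a Schubert decomposition, so no additional constructibility argument is needed — and that replacing $C_\beta^M$ by $C_\beta^{M,\red}$ leaves $\chi$ unchanged, which is immediate. I would also note in passing that the identical argument, applied to the Euler characteristic with compact supports in $l$-adic cohomology, gives the analogous counting statement for quiver Grassmannians over $\Z$ or over a finite field, so the result is insensitive to the choice of cohomology theory.
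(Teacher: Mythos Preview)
Your argument is correct and matches the paper's own reasoning essentially verbatim: additivity of $\chi$ over the Schubert decomposition, the equality $\chi(C_\beta^M)=\chi(C_\beta^{M,\red})$, and $\chi(\A^d)=1$. There is nothing to add.
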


\begin{rem}
 \label{rem: on the results of cerulli and haupt}
 Together with the characterization of non-empty Schubert cells in Theorems \ref{thm: tree extensions} and \ref{thm: push-forward}, we recover Theorem 1 of \cite{Cerulli11} and Corollary 3.1 of \cite{Haupt12} under the assumption of Hypothesis (H).
\end{rem}

\subsection{Regular decompositions}
\label{subsection: regular decompositions}

The following definition makes sense for all rings $k$. 
Let
\[
 \varphi: \ \coprod_{i\in I} \ C_i \quad \longrightarrow \quad X
\]
be a decomposition of $X$ into locally closed subschemes $C_i$. A locally closed subscheme $Z$ of $X$ \emph{decomposes (w.r.t.\ $\varphi$)} if there is a subset $I_Z\subset I$ such that $\varphi$ restricts to a decomposition 
\[
 \coprod_{i\in I_X} \ C_i \quad \longrightarrow \quad Z.
\]
Note that this is a purely topological property of $Z$. Note further that $I_Z$ is uniquely determined if all cells $Z_i$ are non-empty. The decomposition $\varphi:\decomp Z_i\to X$ is \emph{regular} if the closures of all cells $Z_i$ (for $i\in I$) decompose w.r.t.\ $\varphi$. This extends the notion of a regular torification from Section 6.2 of \cite{LL09}. 

The relevance of regular decomposition for Schubert calculus is that they admit a way to calculate the product of cohomology classes in the cohomology ring of the irreducible components of the quiver Grassmannian.

\begin{lemma}\label{lemma: cohomology for regular decompositions into affine spaces}
 Let $X$ be a complex projective scheme of dimension $m$ with irreducible components $X_1,\dotsc,X_n$. Let $\varphi:\coprod_{i\in I} \A^{d_i}\to X$ be a regular decomposition into affine spaces. Then the embeddings $\iota_i: X_i\to X$ define a graded inclusion 
        \[
         (\iota_1^\ast,\dotsc,\iota_n^\ast): \quad H^\ast(X,\Z) \quad \stackrel{}\longrightarrow \quad \bigoplus_{l=1}^n \ H^\ast(X_l,\Z)
        \]
        of graded rings.
\end{lemma}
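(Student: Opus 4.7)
The plan is to exploit Kronecker duality. I will show that the homology pushforward
\[
 \Phi \ = \ \textstyle\sum_l \iota_{l,\ast} \; : \quad \bigoplus_{l=1}^n \ H_\ast(X_l, \Z) \quad \longrightarrow \quad H_\ast(X, \Z)
\]
is surjective, and then deduce injectivity of the dual cohomology map $(\iota_1^\ast, \ldots, \iota_n^\ast)$.

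First I would establish the homology basis of $X$. By the regularity of $\varphi$, the filtration argument given just before Corollary \ref{cor: cohomology for smooth quiver grassmannians} goes through verbatim and yields closed subschemes $\emptyset = X^{(-1)} \subset X^{(0)} \subset \dotsb \subset X^{(n)} = X$ such that each $X^{(l)} \setminus X^{(l-1)}$ is a disjoint union of cells $C_i \simeq \A^{d_i}$. The homology half of the standard cellular argument for affine pavings---which does not require smoothness of $X$---then shows that $H_\ast(X, \Z)$ is a free abelian group concentrated in even degrees, with $\Z$-basis the fundamental classes $\{[\overline{C_i}]^X\}_{i \in I}$. Because $H_\ast(X, \Z)$ is free, the Universal Coefficient Theorem identifies $H^\ast(X, \Z) = \Hom_\Z(H_\ast(X, \Z), \Z)$.

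The key geometric input is that each $C_i \simeq \A^{d_i}$ is irreducible, so $\overline{C_i}$ is an irreducible closed subvariety of $X$ and therefore lies in some irreducible component $X_{l(i)}$. Its fundamental class $[\overline{C_i}]^{X_{l(i)}} \in H_{2d_i}(X_{l(i)}, \Z)$ is sent by $\iota_{l(i), \ast}$ to $[\overline{C_i}]^X$. Since the $[\overline{C_i}]^X$ generate $H_\ast(X, \Z)$, $\Phi$ is surjective. Now for $\alpha \in H^k(X, \Z)$ in the kernel of $(\iota_1^\ast, \ldots, \iota_n^\ast)$, the Kronecker pairing yields $\langle \alpha, \iota_{l,\ast}\beta \rangle = \langle \iota_l^\ast \alpha, \beta \rangle = 0$ for every $l$ and every $\beta \in H_k(X_l, \Z)$, hence $\alpha$ vanishes on $\im \Phi = H_k(X, \Z)$; under $H^k(X, \Z) = \Hom_\Z(H_k(X, \Z), \Z)$ this forces $\alpha = 0$. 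Finally, each $\iota_l^\ast$ is a graded ring homomorphism and the ring structure on $\bigoplus_l H^\ast(X_l, \Z)$ is coordinate-wise, so $(\iota_1^\ast, \ldots, \iota_n^\ast)$ is itself a graded ring homomorphism.

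The main obstacle I anticipate is the first step: confirming that the cellular basis of $H_\ast(X, \Z)$ exists even though $X$ itself need not be smooth. The safe route is to rely only on the purely topological half of Fulton's Appendix B argument, which constructs the basis by induction on the filtration using the long exact sequences of the pairs $(X^{(l)}, X^{(l-1)})$ and does not invoke Poincar\'e duality on $X$; only the cohomological half of that statement (which does require smoothness) is avoided.
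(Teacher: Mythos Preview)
Your argument is correct and takes a genuinely different route from the paper's. The paper proceeds by induction on the number $n$ of irreducible components: writing $X = Y \cup X_n$ with $Y = X_1 \cup \dotsb \cup X_{n-1}$, it observes (as you do) that each cell closure $\overline{C_i}$ is irreducible and hence lies in some component, so $Y$, $X_n$, and $Z = Y \cap X_n$ all inherit regular affine decompositions. Applying the induction hypothesis to these pieces, the paper argues that the restriction map $H^d(Y,\Z)\oplus H^d(X_n,\Z)\to H^d(Z,\Z)$ in the Mayer--Vietoris sequence is surjective in every degree, which splits the long exact sequence into short exact sequences and exhibits $(\imath_Y^\ast,\imath_X^\ast)$ as injective.

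Your approach is more direct: you use the affine paving once, for $X$ itself, to get a free homology basis, then observe that each basis class is pushed forward from some component, so $\Phi=\sum_l\iota_{l,\ast}$ is surjective; Kronecker duality (via $H^\ast(X,\Z)=\Hom(H_\ast(X,\Z),\Z)$, valid since $H_\ast$ is free) then gives injectivity of the dual map. This avoids the induction and the Mayer--Vietoris machinery entirely, and it isolates the single geometric fact that drives both proofs---irreducibility of cell closures---more cleanly. The Mayer--Vietoris argument, on the other hand, is better suited to establishing the longer exact sequence in the paper's subsequent Remark, and it simultaneously yields the explicit basis description of $H^\ast(X,\Z)$ by cell closures rather than only freeness. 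One small quibble: the filtration lemma you cite is stated for Schubert decompositions and uses the order $\preceq$ on subsets; for a general regular decomposition you need to replace this by the partial order ``$C_j\subset\overline{C_i}$'' and filter by chain length, but this is routine and you clearly have the right idea.
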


\begin{proof}

 We make an induction on the number $n$ of irreducible components of $X$. If $n=1$, then \eqref{part3} is trivial.

 Let $n>1$. Then $X$ is the union of two closed subsets $Y$ and $X_n$ for $Y=X_1\cup\dotsc\cup X_{n-1}$. Let $\imath_Y:Y\to X$ and $\imath_X:X_n\to X$ be the embeddings of $Y$ resp.\ $X_n$ into $X$ and let $\jmath_Y:Z\to Y$ and $\jmath_X:Z\to X_n$ be the embeddings of $Z=Y\cap X_n$ into $Y$ resp.\ $X_n$. We consider the Mayer-Vietoris sequence
 \[
  \begin{split}
    \dotsc \quad & \longrightarrow \quad H^d(X,\Z) \quad \stackrel{(\imath_Y^\ast,\imath_{X}^\ast)}\longrightarrow \quad H^d(Y,\Z)\oplus H^d(X_n,\Z) \quad \stackrel{(\jmath_Y^\ast,-\jmath_{X}^\ast)}\longrightarrow \quad H^d(Z,\Z) \\
    \quad &\longrightarrow \quad H^{d+1}(X,\Z) \quad \longrightarrow \quad \dotsc
  \end{split}
 \]
 Let $X_l$ be an irreducible component of $X$ and $C_i=\A^{d_i}$ a cell of the decomposition $\varphi$. If $X_l$ intersects $C_i$ non-trivially, then $C_i$ is contained in $X_l$ since $C_i$ is irreducible. This shows that $X_l$ decomposes w.r.t.\ $\varphi$. Since $X_l$ is closed in $X$, it contains also the closure $\overline{C_i}$ of each of its cell $C_i\subset X_l$. It follows that for all $\{l_1,\dotsc,l_r\}\subset\{1,\dotsc,n\}$, the intersection $X_{l_1}\cap\dotsb\cap X_{l_r}$ decomposes w.r.t.\ $\varphi$ and contains the closure of each of its cells $C_i$.

 If $[\overline{C_i}]$ is the cohomology class of the closure of $C_i$, then the homomorphism $\iota_Y^\ast:H(X,\Z)\to H(Y,\Z)$ sends $[\overline{C_i}]$ to the class of $\overline{C_i}$ if $\overline{C_i}$ is a subscheme of $Y$, or to $0$ if not. The analog statement is true for $\imath_X$, $\jmath_Y$ and $\jmath_X$. By the induction hypothesis, $H^{d}(Y,\Z)$, $H^{d}(X_n,\Z)$ and $H^{d}(Z,\Z)$ are freely generated by the classes of the closures of the cells $C_i$ of dimension $d'$ that are contained in $Y$, $X_n$ resp.\ $Z$ if $d=2m-2d'$ is even, and they are $0$ if $d$ is odd. Therefore the homomorphism $(\jmath_Y^\ast,-\jmath_{X}^\ast):H^d(Y,\Z)\oplus H^d(X_n,\Z)\to H^d(Z,\Z)$ is surjective for every degree $d$, which means that the Mayer-Vietoris sequence splits into short exact sequences
 \[
    0 \quad \longrightarrow \quad H^d(X,\Z) \quad \stackrel{(\imath_Y^\ast,\imath_{X}^\ast)}\longrightarrow \quad H^d(Y,\Z)\oplus H^d(X_n,\Z) \quad \stackrel{(\jmath_Y^\ast,-\jmath_{X}^\ast)}\longrightarrow \quad H^d(Z,\Z) \quad \longrightarrow \quad 0.
 \]
 Since both $\imath^\ast_Y:H^\ast(X,\Z)\to H^\ast(Y,\Z)$ and $\imath_X^\ast:H^\ast(X,\Z)\to H^\ast(X_n,\Z)$ are ring homomorphisms by the induction hypothesis, \eqref{part3} follows. This completes the proof of the lemma.
\end{proof}

\begin{rem}
 The inclusion  $H^\ast(X,\Z) \to \bigoplus_{l=1}^n \ H^\ast(X_l,\Z)$ is the initial part of an exact sequence of $\Z$-modules of the form
 \begin{multline*}
  0 \quad \longrightarrow \quad H^\ast(X,\Z) \quad \stackrel{(\iota_1^\ast,\dotsc,\iota_n^\ast)}\longrightarrow \quad \bigoplus_{l=1}^n \ H^\ast(X_l,\Z) \quad \longrightarrow \quad \bigoplus_{1\leq l_1 < l_2 \leq n} \ H^\ast(X_{l_1,l_2},\Z)  \quad \longrightarrow \quad \\
  \dotsc  \quad \longrightarrow \quad \bigoplus_{1\leq l_1 < \dotsc < l_{n-1} \leq n} \ H^\ast(X_{l_1,\dotsc,l_{n-1}},\Z) \quad \longrightarrow \quad H^\ast(X_{1,\dotsc,n},\Z) \quad \longrightarrow \quad 0
 \end{multline*}
 where $X_{l_1,\dotsc,l_r}=X_{l_1}\cap \dotsc \cap X_{l_r}$ and the homomorphisms are defined as alternating sum of restriction maps $H^\ast(X_{l_1,\dotsc,l_r},\Z)\to H^\ast(X_{l_1,\dotsc,l_{r+1}},\Z)$, similar to those that appear in the definition of singular cohomology or \v{C}ech cohomology.
\end{rem}

\subsection{Examples and conjectures}
\label{subsection: examples and conjectures}

In this section, we will describe some examples (and counter examples) of Schubert decompositions of quiver Grassmannians that are regular. Everything can be considered over an arbitrary base ring $k$.

\begin{ex}[Usual Grassmannians and flag varieties]\label{ex: regular schubert decompositions of flag varieties}
 It is well-known that the Schubert decomposition of a usual Grassmannian or, more generally, of a flag variety is regular (cf.\ Exercise 13 of \S 9.4 and p.\ 159 in \cite{Fulton97}). In our notation, this fact takes the following shape.

 Let $\ue=(e_1,\dotsc,e_r)$ be the type of the flag variety $X$ of subspaces in $k^m$. Let $Q$ be a quiver of the form $1\to\dotsb\to r$ and $M$ the $Q$-module $k^m\stackrel\id\longrightarrow \dotsb \stackrel\id\longrightarrow k^m$. Then $\Gr_\ue(M)$ is isomorphic to $X$. If we order the standard basis $\cB=\{b_{k,p}\,|\,k=1\dotsc,m;\,p=1,\dotsc,r\}$ of $M$ lexicographically, then the decomposition 
 \[
  \Gr_\ue(M) \quad = \quad \decomp_{\beta\subset\cB\text{ of type }\ue} C_\beta^M
 \]
 coincides with the usual decomposition of $X$ into Schubert cells, cf.\ Example \ref{ex: flag varieties}.
 
 There is a natural action of $\GL_m$ on $\Gr_{e_p}(M_p)$ for each $p\in Q_0$, and thus a diagonal action on the flag variety $\Gr_\ue(M)$. The orbits of the upper triangular Borel subgroup $B$ of $\GL_m$ coincide with the Schubert cells $C_\beta^M$. Since the closure of an orbit is decomposes into orbits, the Schubert decomposition of $\Gr_\ue(M)$ is regular. More precisely, we have
 \[
  \overline{C_\beta^M} \quad = \quad \decomp_{\gamma\preceq\beta} \ C_\gamma^M.
 \]
\end{ex}

\begin{ex}[Representations of forests]\label{ex: regular schubert decompositions of representations of forests}
 Let $Q$ be a forest with $\kappa$ vertices and $M$ a $Q$-module with ordered basis $\cB$ such that $M_\alpha$ is the identity matrix for all arrows $\alpha$ of $Q$. By Theorem \ref{thm: Grassmannian fibrations}, there is a sequence $Q^{(1)}\subset\dotsb\subset Q^{(\kappa)}=Q$ of subquivers and a sequence
 \[
 \xymatrix{\Phi:\quad \Gr_\ue(M) \quad \ar@{->>}[r]^(0.6){\varphi_\kappa} & \quad \dotsb \quad \ar@{->>}[r]^(0.35){\varphi_{2}} & \quad \Gr_{\ue^{(1)}}(M^{(1)}) \quad \ar@{->>}[r]^(0.58){\varphi_{1}} & \quad \Spec k}
 \]
 of fibre bundles $\varphi_i$ whose fibres are Grassmannians $\Gr(\tilde e_i,\tilde m_i)$ for certain $\tilde e_i\leq\tilde m_i$ and $i=1,\dotsc,\kappa$. Here $M^{(i)}$ and $\ue^{(i)}$ are the restrictions of $M$ resp.\ $\ue$ to $Q^{(i)}$. Since every fibre has a regular Schubert decomposition such that the closure of each cell decomposes into the cells with smaller index, we obtain
 \[
  \overline{C_\beta^M} \quad = \quad \decomp_{\gamma\preceq\beta} \ C_\gamma^M,
 \]
 which generalizes Example \ref{ex: regular schubert decompositions of flag varieties}.
\end{ex}

\begin{ex}[Degenerate flag varieties]\label{ex: degenerate_flag_varieties2}
 Cerulli, Feigin and Reineke identify in \cite{Cerulli-Feigin-Reineke11} degenerate flag varieties of Dynkin type with certain quiver Grassmannians $\Gr_\ue(M)$ and establish a regular decomposition into the finitely orbits of the action of a certain Borel subgroup of the automorphism group of $M$. 

 In this example, we consider the case of a complete degenerate flag variety $F_\ue^a$ of flags of type $\ue=(1,\dotsc,n)$ in $k^{n+1}$, which can be identified with the quiver Grassmannian $\Gr_\ue(P\oplus I)$ where $P$ is the direct sum over all indecomposable projective $Q$-modules and $I$ is the direct sum over all indecomposable injective $Q$-modules for the equioriented quiver $Q$ of type $A_n$. In Section 7.2 of \cite{Cerulli-Feigin-Reineke11}, the reader finds a detailed description of the orbits of $B$ for this case. We will see that this decomposition coincides indeed with the Schubert decomposition w.r.t.\ a certain choice of ordered basis. It seems to be interesting to work out the connection for the general degenerate flag variety of Dynkin type.

 Let $Q=1\to\dotsb\to n$ be the underlying equioriented quiver of type $A_n$. For $i=1,\dotsb,n$, let $P_i$ be the indecomposable projective $Q$-module with support $i\to\dotsb\to n$ and let $I_i$ be the indecomposable injective $Q$-module with support $1\to \dotsb\to i$. Then $P=\bigoplus_{i=1}^n P_i$ and $I=\bigoplus_{i=1}^n I_i$.
 
 The $k$-module $P_{i,j}$ is trivial if $j<i$ and of rank one if $j\geq i$, in which case we denote the corresponding basis vector by $b^P_{i,j}$. The $k$-module $I_{i,j}$ is trivial if $j>i$ and of rank one if $j\leq i$, in which case, we denote the corresponding basis vector by $b^I_{i,j}$. The set 
 \[
  \cB \quad = \quad \{\ b^P_{i,j} \ | \ 1\leq i\leq j\leq n \ \} \quad \cup \quad \{ \ b^I_{i,j} \ | \ 1\leq j\leq i\leq n \ \}
 \]
 is a basis for $M=P\oplus I$. 

 We order $\cB$ as follows. First note that the relative order of the subsets $\cB_i$ (where $i\in Q_0$) is irrelevant for the shape of the Schubert cell $C_\beta^M$ of $\Gr_\ue(M)$, cf.\ Remark \ref{rem: irrelevance of oredering of basis elements of different vertices}. For a vertex $i$, we order $\cB_i$ by $b^I_{i,i}<\dotsb<b^I_{i,r}<b^P_{i,1}<\dotsb<b^P_{i,i}$. Then the Schubert cell $C_{\beta_i}^{M_i}$ coincides with the cell $C_{L_i}$ of \cite{Cerulli-Feigin-Reineke11} (as defined in Section 7.2), and $C_\beta^M$ coincides with the intersection of $\Gr_\ue(M)$ with the product $\prod_{i=1}^r C_{L_i}$ in $\Gr_\ue(\um)$. By Theorem 7.11 in \cite{Cerulli-Feigin-Reineke11}, this cell coincides with an orbit of the action of $B$ on $\Gr_\ue(M)$.

 In the notation of this paper, we can identify this Schubert decomposition with the Schubert decomposition of $\Gr_\ue(M')$ w.r.t.\ $\cB$ where $M'$ is the $Q$-module
 \[
  k^{n+1} \quad \stackrel {J(0)}\longrightarrow \quad k^{n+1} \quad \stackrel {J(0)}\longrightarrow \quad \dotsb \quad \stackrel {J(0)}\longrightarrow \quad k^{n+1}
 \]
 and $\cB$ is the standard ordered basis of $M'$ (recall that $J(0)$ is a maximal Jordan block with $0$ on the diagonal and $1$ on the upper side diagonal) .
\end{ex}

Based on the last two examples and further calculations, I expect that the following statements are true.

\begin{conj}\label{conj: regular decompositions for monomial representations of forests}
 Let $Q$ be a forest and $M$ a $Q$-module with ordered basis $\cB$ such that for every arrow $\alpha$ of $Q$, the matrix $M_\alpha$ is a block matrix $\tinymat 0I00$ where $I$ is a square identity matrix. Then $\Gr_\ue(M)=\coprod C_\beta^M$ is a regular decomposition into affine spaces for every dimension vector $\ue$.
\end{conj}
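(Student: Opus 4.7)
The plan is to follow the group-theoretic strategy of Cerulli, Feigin and Reineke \cite{Cerulli-Feigin-Reineke11}, outlined in Example \ref{ex: degenerate_flag_varieties2}: exhibit a connected algebraic group $B$ acting on $\Gr_\ue(M)$ whose orbits are precisely the reduced Schubert cells $C_\beta^{M,\red}$. Once this is in place, regularity is automatic, because the closure of a $B$-orbit in a $B$-variety is $B$-invariant and closed, hence a union of $B$-orbits, i.e., a union of Schubert cells. Together with Theorem \ref{thm: monomial representations of forests}, which already guarantees that each cell is an affine space, this proves the conjecture.

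First I would unpack the structure of $M$. As shown at the beginning of the proof of Theorem \ref{thm: monomial representations of forests}, the block form $M_\alpha=\tinymat 0I00$ combined with $Q$ being a forest forces a decomposition $M=\bigoplus_{i=1}^r M_i$ into thin indecomposable summands $M_i$ supported on subtrees $T_i\subset Q$, all of whose structure maps are identities on their support. I would then construct $B$ as the subgroup of $\Aut(M)$ consisting of automorphisms $g=(g_p)_{p\in Q_0}$ such that each $g_p\in\GL(M_p)$ is upper triangular w.r.t.\ $\cB_p$. The group $\Aut(M)$ contains the $r$-dimensional torus rescaling each summand $M_i$, together with a unipotent part coming from homomorphisms $M_i\to M_j$ that are compatible with all $M_\alpha=\tinymat 0I00$; intersecting with the upper-triangular condition carves out a connected solvable subgroup $B$. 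One direction of the orbit claim is immediate: $B$ preserves the flag in each $M_p$ induced by $\cB_p$, hence stabilises every Schubert cell of the ambient product Grassmannian $\Gr_\ue(\um)$, and so stabilises every $C_\beta^M$.

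The main obstacle is verifying transitivity of $B$ on each non-empty $C_\beta^{M,\red}$. Using the matrix parametrisation of Section \ref{subsection: K-rational points}, a point of $C_\beta^{M,\red}$ is encoded by the free coefficients $v_{b',b}$; for each such coefficient I would produce a one-parameter subgroup of $B$ whose action modifies $v_{b',b}$ while leaving the remaining free coefficients untouched. The block structure $\tinymat 0I00$ is essential: on the one hand it constrains which block-upper-triangular subgroups actually lie in $\Aut(M)$, and on the other, as in the proof of Theorem \ref{thm: monomial representations of forests}, it ensures that all relevant triples are of types 0, 1 or 2, so the combinatorics of free coefficients matches the combinatorics of the one-parameter subgroups of $B$. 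I expect this matching to be checked inductively on the number of vertices of $Q$, using Theorem \ref{thm: push-forward} to reduce each step either to a usual Grassmannian (Example \ref{ex: regular schubert decompositions of flag varieties}) or to a thin representation of a forest with identity matrices (Example \ref{ex: regular schubert decompositions of representations of forests}), both of which are already known to be regular. If a fully explicit group-theoretic argument turns out to be delicate, a fallback is to verify the combinatorial version directly: show that for every $\gamma\preceq\beta$ with $C_\gamma^M\cap\overline{C_\beta^M}\ne\emptyset$, the inclusion $C_\gamma^M\subset\overline{C_\beta^M}$ holds by constructing explicit degeneration families $v(t)\in C_\beta^M(k[t,t^{-1}])$ specialising to any prescribed point of $C_\gamma^M$, again using the specific $\tinymat 0I00$-block shape to solve the ensuing linear relations.
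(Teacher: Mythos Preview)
The statement you are attempting to prove is a \emph{conjecture} in the paper; it is explicitly left open, introduced by ``Based on the last two examples and further calculations, I expect that the following statements are true.'' There is no proof in the paper to compare your proposal against.

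Your strategy is the natural one and mirrors the mechanism behind Example~\ref{ex: degenerate_flag_varieties2}: if a connected group $B$ acts on $\Gr_\ue(M)$ with the Schubert cells as orbits, regularity is automatic. But what you have written is a plan, not a proof, and you correctly identify the crux yourself: transitivity of $B$ on each nonempty $C_\beta^{M,\red}$. This is not a formality. In the degenerate-flag case the group and its orbit structure are worked out explicitly in \cite{Cerulli-Feigin-Reineke11} for a very particular representation (equioriented $A_n$, a single Jordan block at every arrow); for a general forest with arbitrary $\tinymat 0I00$-blocks you would have to show that the upper-triangular part of $\Aut(M)$ is large enough, i.e.\ that for every free coordinate $v_{b',b}$ in the parametrisation of Section~\ref{subsection: K-rational points} there is a one-parameter subgroup of $B$ moving it independently. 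That amounts to matching the Hom-spaces between the thin summands $M_i$ against the combinatorics of the free coordinates, and neither your inductive sketch via Theorem~\ref{thm: push-forward} nor your degeneration fallback actually supplies this matching. Either route, if completed, would settle an open problem in the paper; the gap in your proposal is therefore a genuine one, not a matter of presentation.
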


\begin{conj}\label{conj: regular decomposition for tree extensions}
 Let $T$ be a tree extension of $S$ and $M$ a $T$-module such that $M_\alpha$ is an isomorphism for all all arrows $\alpha$ in $T-S$. Let $M_S$ be the restriction of $M$ to $S$ and $\cB$ be an ordered basis of $M$ that is ordered above $\cB_S=\cB\cap M_S$. Let $\beta\subset\cB$ of type $\ue$ and $\beta_S=\beta\cap M_S$ of type $\ue_S$. Then the following holds true.
 \begin{enumerate}
  \item If $\overline{C_{\beta_S}^{M_S}}$ decomposes into Schubert cells, then also $\overline{C_\beta^M}$ decomposes into Schubert cells. 
  \item If $\overline{C_{\beta_S}^{M_S}}$ decomposes into Schubert cells and if
                     \[
                      \overline{C_{\beta_S}^{M_S}} \quad = \quad \decomp_{\gamma_S\preceq\beta_S\text{ of type }\ue_S} \ C_{\beta'_S}^{M_S},   \qquad \text{then}\qquad 
                      \overline{C_{\beta}^{M}} \quad = \quad \decomp_{\beta'\preceq\beta\text{ of type }\ue} \ C_{\beta'}^{M}.
                     \]    
 \end{enumerate}
 In particular, if $\Gr_{\ue_S}(M_S)=\decomp C_{\beta_S}^{M_S}$ is a regular decomposition, then so is $\Gr_{\ue}(M)=\decomp C_{\beta}^{M}$.
\end{conj}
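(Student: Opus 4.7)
The plan is to argue by induction on $\kappa = \#T_0 - \#S_0$, using the tower of Grassmannian bundles from Theorem \ref{thm: Grassmannian fibrations}. The case $\kappa = 0$ is tautological, and the tower reduces the inductive step to a single one-vertex extension; so we may assume $\kappa = 1$, i.e.\ $T = S\cup\{p,\alpha\}$, and $\varphi:\Gr_\ue(M)\twoheadrightarrow\Gr_{\ue_S}(M_S)$ is a smooth projective Grassmannian bundle with fibre $F = \Gr(\tilde e,\tilde m)$. By the proof of Theorem \ref{thm: tree extensions}, the isomorphism $C_\beta^M \simeq C_{\beta_S}^{M_S}\times\A^{n_\beta}$ realises $\varphi|_{C_\beta^M}$ as the first projection, and the subsets $\beta\subset\cB$ of type $\ue$ with a fixed $\beta_S = \beta\cap\cB_S$ are in natural bijection with subsets $\beta_p$ of an induced ordered basis of the fibre $F$. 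Under this bijection, the cells $C_\beta^M$ lying over $C_{\beta_S}^{M_S}$ arrange, in any local trivialisation of $\varphi$, into products $C_{\beta_S}^{M_S}\times F_{\beta_p}$, where $F_{\beta_p}$ is a Schubert cell of $F$.

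Since $F$ is a usual Grassmannian, its Schubert decomposition is regular: $\overline{F_{\beta_p}} = \decomp_{\gamma_p\preceq\beta_p}F_{\gamma_p}$. Properness of $\varphi$ gives $\varphi(\overline{C_\beta^M}) = \overline{C_{\beta_S}^{M_S}}$, and the a priori inclusion $\overline{C_\beta^M}\subset\decomp_{\gamma\preceq\beta}C_\gamma^M$ (coming from the Schubert structure of the ambient product Grassmannian $\Gr_\ue(\um)$) yields the ``easy'' containment in (ii). For each base stratum $C_{\gamma_S}^{M_S}\subset\overline{C_{\beta_S}^{M_S}}$ appearing in the hypothesised decomposition, working over a Zariski open trivialising neighbourhood meeting $C_{\gamma_S}^{M_S}$ and applying the regular Schubert decomposition of the fibre, one identifies
\[
 \overline{C_\beta^M}\cap\varphi^{-1}\bigl(C_{\gamma_S}^{M_S}\bigr)\;=\;\decomp_{\gamma_p\preceq\beta_p}\;C_{\gamma_S\cup\gamma_p}^M.
\]
Assembling this across the base strata in the hypothesised decomposition of $\overline{C_{\beta_S}^{M_S}}$ produces both (i) and (ii), and the ``in particular'' statement follows by applying (ii) to every $\beta$.

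The main obstacle is establishing uniformly across the base that the matching between the cells $C_\beta^M$ and the product pieces $C_{\beta_S}^{M_S}\times F_{\beta_p}$ is compatible with the Schubert stratification of $F$, so that closure relations in the total space can be read off from closure relations in the base and in the fibre. This is delicate because the identification of each fibre with $F$ depends on the base point (the fibre in Case I of Theorem \ref{thm: Grassmannian fibrations} is $M_q / M_\alpha(V_p)$, which varies with $V_p$), so the fibre Schubert decomposition over $C_{\gamma_S}^{M_S}$ is not obviously the ``same'' as the one over $C_{\beta_S}^{M_S}$. I expect this coherence can be addressed either by constructing a unipotent group action on $\Gr_\ue(M)$ preserving the Schubert cells fibrewise (analogous to the Borel action in Example \ref{ex: regular schubert decompositions of flag varieties}), or by exhibiting Pl\"ucker-linear defining equations for $\overline{C_\beta^M}$ whose coefficients depend linearly on base coordinates, in the spirit of the linearity arguments used in the proofs of Theorems \ref{thm: tree extensions} and \ref{thm: Grassmannian fibrations}.
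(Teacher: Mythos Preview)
The statement you are attempting to prove is presented in the paper as a \emph{conjecture} (Conjecture~\ref{conj: regular decomposition for tree extensions}), not a theorem; the paper offers no proof, only the remark preceding it that the statement is ``based on the last two examples and further calculations''. There is therefore no paper proof to compare against.

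Your outline is the natural approach and correctly isolates the relevant structure: the Grassmannian-bundle tower of Theorem~\ref{thm: Grassmannian fibrations}, the product description $C_\beta^M \simeq C_{\beta_S}^{M_S}\times\A^{n_\beta}$ of Theorem~\ref{thm: tree extensions}, and the regularity of the Schubert decomposition of the fibre Grassmannian. The auxiliary reductions you make---properness of $\varphi$ to control $\varphi(\overline{C_\beta^M})$, and the a~priori containment $\overline{C_\beta^M}\subset\decomp_{\gamma\preceq\beta}C_\gamma^M$ inherited from the ambient product Grassmannian---are sound.

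However, the gap you yourself flag is genuine and is not a technicality to be filled in routinely. The bijection between cells $C_\beta^M$ lying over a fixed base cell $C_{\gamma_S}^{M_S}$ and Schubert cells of the fibre depends on $\gamma_S$: in Case~I of the proof of Theorem~\ref{thm: Grassmannian fibrations} the complement $\cB_q\setminus M_\alpha(\gamma_p)$ that indexes the fibre cells changes with $\gamma_p$, and in Case~II the constraint $M_\alpha(\beta_p)\subset\gamma_q$ changes with $\gamma_q$. Hence the asserted identity
\[
 \overline{C_\beta^M}\cap\varphi^{-1}\bigl(C_{\gamma_S}^{M_S}\bigr)\;=\;\decomp_{\gamma_p\preceq\beta_p}\,C_{\gamma_S\cup\gamma_p}^M
\]
does not follow from the regularity of a \emph{single} fibre's Schubert stratification; one must show that limits of points in $C_\beta^M$ degenerating in the base direction land in the correct fibre cells, and this is exactly the missing content. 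Neither of your suggested remedies---a compatible unipotent action, or Pl\"ucker-linear equations with base-linear coefficients---is supplied anywhere in the paper, and producing either would amount to proving the conjecture. In short: your plan has the right shape, but the obstacle you name \emph{is} the open problem.
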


\begin{small}
 \bibliographystyle{plain}

\begin{thebibliography}{10}

\bibitem{Caldero-Chapoton06}
Philippe Caldero and Fr{\'e}d{\'e}ric Chapoton.
\newblock Cluster algebras as {H}all algebras of quiver representations.
\newblock {\em Comment. Math. Helv.}, 81(3):595--616, 2006.

\bibitem{Caldero-Reineke08}
Philippe Caldero and Markus Reineke.
\newblock On the quiver {G}rassmannian in the acyclic case.
\newblock {\em J. Pure Appl. Algebra}, 212(11):2369--2380, 2008.

\bibitem{Caldero-Zelevinsky06}
Philippe Caldero and Andrei Zelevinsky.
\newblock Laurent expansions in cluster algebras via quiver representations.
\newblock {\em Mosc. Math. J.}, 6(3), 411--429, 2006

\bibitem{Cerulli11}
Giovanni Cerulli~Irelli.
\newblock Quiver {G}rassmannians associated with string modules.
\newblock {\em J. Algebraic Combin.}, 33(2):259--276, 2011.

\bibitem{Cerulli-Esposito11}
Giovanni Cerulli~Irelli and Francesco Esposito.
\newblock Geometry of quiver {G}rassmannians of {K}ronecker type and
  applications to cluster algebras.
\newblock {\em Algebra Number Theory}, 5(6):777--801, 2011.

\bibitem{Cerulli-Feigin-Reineke11}
Giovanni Cerulli~Irelli, Evgeny Feigin, and Markus Reineke.
\newblock Quiver grassmannians and degenerate flag varieties.
\newblock Preprint, arxiv:1106.2399, 2011.

\bibitem{Cerulli-Feigin-Reineke12a}
Giovanni Cerulli~Irelli, Evgeny Feigin, and Markus Reineke.
\newblock Degenerate flag varieties: moment graphs and schröder numbers.
\newblock Preprint, arxiv:1206.4178, 2012.

\bibitem{Cerulli-Feigin-Reineke12b}
Giovanni Cerulli~Irelli, Evgeny Feigin, and Markus Reineke.
\newblock Desingularization of quiver grassmannians for dynkin quivers.
\newblock Preprint, arxiv:1209.3960, 2012.

\bibitem{fomin-Zelevinsky02}
Sergey Fomin and Andrei Zelevinsky.
\newblock Cluster algebras. {I}. {F}oundations.
\newblock {\em J. Amer. Math. Soc.}, 15(2):497--529 (electronic), 2002.

\bibitem{Fulton97}
William Fulton.
\newblock {\em Young tableaux}, volume~35 of {\em London Mathematical Society
  Student Texts}.
\newblock Cambridge University Press, Cambridge, 1997.
\newblock With applications to representation theory and geometry.

\bibitem{Haupt12}
Nicolas Haupt.
\newblock Euler {C}haracteristics of {Q}uiver {G}rassmannians and
  {R}ingel-{H}all {A}lgebras of {S}tring {A}lgebras.
\newblock {\em Algebr. Represent. Theory}, 15(4):755--793, 2012.

\bibitem{LL09}
Javier L{\'o}pez~Pe{\~n}a and Oliver Lorscheid.
\newblock Torified varieties and their geometries over {$\mathbb F_1$}.
\newblock {\em Math. Z.}, 267(3-4):605--643, 2011.

\bibitem{Lusztig94}
George Lusztig.
\newblock Total positivity in reductive groups.
\newblock In {\em Lie theory and geometry}, volume 123 of {\em Progr. Math.},
  pages 531--568. Birkh\"auser Boston, Boston, MA, 1994.

\bibitem{Qin10}
Fan Qin.
\newblock Quantum cluster variables via {S}erre polynomials.
\newblock Preprint, {\tt arXiv:1004.4171}, 2010.

\bibitem{Reineke12}
Markus Reineke.
\newblock Every projective variety is a quiver {G}rassmannian.
\newblock Preprint, {\tt arXiv:1204.5730}, 2012.

\bibitem{Rupel11a}
Dylan Rupel.
\newblock On a quantum analog of the {C}aldero-{C}hapoton formula.
\newblock {\em Int. Math. Res. Not. IMRN}, (14):3207--3236, 2011.

\bibitem{Rupel11b}
Dylan Rupel.
\newblock Quantum cluster characters.
\newblock Preprint, arxiv:1109.6694, 2011.

\bibitem{Szanto11}
Csaba Sz{\'a}nt{\'o}.
\newblock On the cardinalities of {K}ronecker quiver {G}rassmannians.
\newblock {\em Math. Z.}, 269(3-4):833--846, 2011.

\bibitem{Szczesny12}
Matt Szczesny.
\newblock Representations of quivers over {$\mathbb F_1$} and {H}all algebras.
\newblock {\em Int. Math. Res. Not. IMRN}, (10):2377--2404, 2012.

\end{thebibliography}

\end{small}

\end{document}